\newtheorem{numberingthm}{Theorem}[section] % reset theorem numbering for each section
\theoremstyle{definition}
\newtheorem{Def}[numberingthm]{Definition}
\theoremstyle{plain}
\newtheorem{Prop}[numberingthm]{Proposition}
\newtheorem{Theorem}[numberingthm]{Theorem}
\newtheorem{Cor}[numberingthm]{Corollary}
\newtheorem{Lemma}[numberingthm]{Lemma}
\newtheorem*{thmintroductionA}{Theorem~A}
\newtheorem*{thmintroductionB}{Theorem~B}
\newenvironment{Example}
{\pushQED{\qed}\example}
{\popQED\endexample}
\theoremstyle{remark}
\newtheorem{Remark}[numberingthm]{Remark}
\newcommand{\m}{\!\operatorname{-mod}}
\newcommand{\Ext}{\mathsf{Ext}}
\newcommand{\End}{\mathsf{End}}
\newcommand{\lerp}{{}^\perp}
\newcommand{\add}{\!\mathsf{add}}
\newcommand{\pdim}{\mathsf{pd}}
\newcommand{\Tr}{\mathsf{Tr}}
\newcommand{\domdim}{\mathsf{domdim}}
\newcommand{\Hom}{\mathsf{Hom}}
\newcommand{\ddim}{\mathsf{dim}}
\newcommand{\codim}{\mathsf{codim}}
\newcommand{\injdim}{\mathsf{id}}
\newcommand{\gldim}{\mathsf{gldim}}
\newcommand{\findim}{\mathsf{findim}}
\newcommand{\op}{\mathsf{op}}
\DeclareMathOperator*{\smod}{\mathsf{mod}-\!}
\DeclareMathOperator*{\lsmod}{\!-\mathsf{mod}}
\DeclareMathOperator*{\lusmod}{\!-\underline{\mathsf{mod}}}
\DeclareMathOperator*{\losmod}{\!-\overline{\mathsf{mod}}}
\newcommand{\CM}{\mathsf{CM}}
\DeclareMathOperator{\ldom}{\!-\mathsf{domdim}} %I removed the star because this operator should have nothing beneath
\DeclareMathOperator{\lcodom}{\!-\mathsf{codomdim}}
\begin{document}

\baselineskip=14pt

\title{A higher dimensional Auslander-Iyama-Solberg correspondence}

\author[T. Cruz]{Tiago Cruz}
\address[Tiago Cruz]{Institut f\"ur Algebra und Zahlentheorie,
Universit\"at Stuttgart, Germany }
\email{tiago.cruz@mathematik.uni-stuttgart.de}

\author[C. Psaroudakis]{Chrysostomos Psaroudakis}
\address[Chrysostomos Psaroudakis]{Department of Mathematics, Aristotle University of Thessaloniki,
   54124, Thessaloniki, Greece}
\email{chpsaroud@math.auth.gr}

\subjclass[2020]{Primary: 16G10, 16E10, 16E65,  Secondary: 16G50, 20G43, 16G20, 16S50, 18G25}
\keywords{Relative Auslander-Gorenstein pairs, Auslander-Iyama-Solberg correspondence, quasi-precluster tilting modules, relative dominant dimension, Cohen-Macaulay modules}

\begin{abstract}
In this paper, we prove a higher dimensional version of Auslander-Iyama-Solberg correspondence. Iyama and Solberg  have shown a bijection between $n$-minimal Auslander-Gorenstein algebras and $n$-precluster tilting modules. If $A$ is an $n$-minimal Auslander-Gorenstein algebra, then the pair $(A,P)$ is a relative $(n+1)$-Auslander-Gorenstein pair in the sense of the authors, where $P$ is the minimal faithful projective-injective left $A$-module. 
 We establish a higher dimensional Auslander-Iyama-Solberg, where $P$ is replaced by any self-orthogonal module $Q$ having finite projective and injective dimension.
This new correspondence provides a bijection between relative Auslander--Gorenstein pairs and a new class of objects that generalise precluster tilting modules. This way, we obtain a new correspondence coming from the modular representation theory of general linear groups. 
\end{abstract}

\maketitle

\section{Introduction}

In its essence, the aim of representation theory is to classify all finite-dimensional algebras and their module categories. 
For instance, determining when an algebra is of finite representation type has been one of the most important and challenging problems in representation theory. In the early seventies, Auslander established a  bijection between algebras of finite representation type and algebras of global dimension at most two and of dominant dimension at least two. The latter algebras are known as Auslander algebras. Auslander's correspondence \cite{zbMATH03517355} brought in representation theory techniques and methods from category theory and homological algebra. It is certainly one of the most fundamental results in representation theory, opening doors to interactions between areas like representation theory, cluster theory, algebraic geometry and symplectic geometry.

Iyama proved in \cite{IyamaCorrespondence} a higher-dimensional analogue of Auslander's correspondence. This is a correspondence between $n$-Auslander algebras, i.e.\ algebras of global dimension at most $n+1$ and of dominant dimension at least $n+1$, and finite $n$-cluster tilting subcategories. This important correspondence, together with Iyama's work \cite{HigherARtheory} on $n$-Auslander-Reiten translation, $n$-Auslander-Reiten duality and $n$-almost split sequences, settled the foundational origins of what is now called higher-dimensional Auslander-Reiten theory. The first Gorenstein occurrence in these Auslander-type correspondences has already appeared in \cite{zbMATH00423522} by Auslander--Solberg, where in Auslander's original correspondence the global dimension has been replaced by the finitistic dimension of an Iwanaga--Gorenstein algebra. More recently, Iyama and Solberg \cite{IS} introduced the concept of $n$-precluster tilting subcategories and proved a bijective correspondence between this type of subcategories and $n$-minimal Auslander-Gorenstein algebras, i.e.\ Iwanaga--Gorenstein algebras with finitistic dimension at most $n+1$ and dominant dimension at least $n+1$. This important correspondence is a higher dimensional analog of Auslander--Solberg correspondence \cite{zbMATH00423522} and also a Gorenstein analog of Iyama's correspondence \cite{IyamaCorrespondence}. %for $n$-Auslander algebras.
It should be noted that all the above correspondences mentioned so far are special cases of the Morita--Tachikawa correspondence.

More recently, new Auslander-type correspondences started to emerge that are no longer restrictions of the Morita-Tachikawa correspondence. An example of this is the Li-Zhang correspondence \cite{LiZhang} that gives an Auslander-type correspondence for the so-called almost $n$-Auslander-Gorenstein algebras introduced in \cite{AT}. These algebras generalise in turn a collection of relative Auslander algebras studied by Iyama in \cite{zbMATH02111865}. For a subset of other generalisations of Auslander algebras we refer to \cite{Grevstad, zbMATH02111865, MR4392222, zbMATH07198564, MR4080882, chen2024iyamasolberg, chan2024dominant, MR4172701}.

In \cite{CrPs}, the authors introduced and studied 
the concept of relative Auslander--Gorenstein pairs. This consists of an Iwanaga-Gorenstein algebra $A$ together with a self-orthogonal $A$-module $Q$ such that the finitistic dimension of $A$ is smaller than or equal to the faithful dimension of $Q$. So the idea of relative Auslander--Gorenstein pairs is to replace in the definition of $n$-minimal Auslander-Gorenstein algebras the dominant dimension of the algebra with the faithful dimension of $Q$ in the sense of Buan-Solberg \cite{BuanSolberg}. The latter dimension fits into the general concept of relative (co)dominant dimension which has been extensively studied by the first author \citep{Cr2}. In \cite{CrPs}, relative Auslander--Gorenstein pairs are characterised in terms of the existence and uniqueness of tilting-cotiling modules having higher values of relative (co)dominant dimension with respect to the self-orthogonal module. The class of relative Auslander--Gorenstein pairs, respectively the class of relative Auslander pairs (i.e.\ relative Auslander--Gorenstein pairs with $A$ of finite global dimension), generalizes the class of $n$-minimal Auslander--Gorenstein algebras, respectively the class of $n$-Auslander algebras. %Pictorially we have the following inclusions between these classes of algebras.
To summarise, we have the following inclusions between these classes of algebras:
%\medskip

{
\centering
\begin{tabular}{ c c c } 
 %\hline
  & & \\[-0.5em]
 \big\{\text{Auslander algebras}\big\} & $\subseteq$ & \big\{ \text{Auslander-Solberg algebras}\big\} \\ 
  \rotatebox[origin=c]{270}{$\subseteq$} & & \rotatebox[origin=c]{270}{$\subseteq$} \\
 \big\{\text{$n$-Auslander algebras}\big\} & $\subseteq$ & \big\{\text{$n$-minimal Auslander-Gorenstein algebras}\big\} \\ 
\rotatebox[origin=c]{270}{$\subseteq$} & & \rotatebox[origin=c]{270}{$\subseteq$} \\
 \big\{\text{relative Auslander pairs}\big\} & $\subseteq$ & \big\{\text{relative Auslander-Gorenstein pairs}\big\} \\
 \rotatebox[origin=c]{90}{$\subseteq$} & & \rotatebox[origin=c]{90}{$\subseteq$} \\
\big\{\text{almost $n$-Auslander algebras}\big\} & $\subseteq$ & \big\{\text{almost  $n$-Auslander-Gorenstein algebras}\big\}. \\[-0.5em]
  & & \\
 %\hline
\end{tabular}
\par
}

%\medskip

Our aim in this paper is to prove an Auslander-type correspondence for the class of relative Auslander--Gorenstein pairs. 
In our setup, "higher dimensional" means that the correspondence has more homological variables than just the finitistic dimension. The reason for this is two-fold: In the case of $n$-minimal Auslander--Gorenstein algebras the module $Q$ is both projective and injective. However, here, this module can have large projective and injective dimensions. Secondly, to achieve our bijection we need to enlarge the class of precluster tilting objects used in \cite{IS, LiZhang}. Motivated by this, we introduce the concept of $(n, m, l)$-quasi-precluster tilting module (Definition~\ref{defnmlpreculuster}). The value $m$ comes from the injective dimension of $Q$, the value $l$ comes from the projective dimension of $Q$ and $n$ 
comes from the finitistic dimension of $A$. 
Moreover, $Q$ being an $(n, m, l)$-quasi-precluster tilting over its endomorphism algebra $\Lambda:=\End_A(Q)^{op}$ means that $Q$ over $\Lambda$ has a quasi-(co)generator structure as introduced in \cite{CruzMoritaTachikawa}.

The main result of the paper is stated below and it is what we propose to call the higher dimensional {Auslander--Iyama--Solberg} correspondence. 

\begin{thmintroductionA} \textnormal{(see Theorem~\ref{maintheorem})}
Let $k$ be a field. For every triple of non-negative integers $n$, $m$ and $l$ with $n\geq m+l+2$, there is a one-to-one correspondence between:
\begin{itemize}
\item pairs $(\Lambda, Q)$, where $\Lambda$ is a finite-dimensional algebra and $Q$ is an $(n, m, l)$-quasi-precluster tilting right $\Lambda$-module;

\item relative $n$-Auslander--Gorenstein pairs $(A, Q)$, where $Q$ is a self-orthogonal left $A$-module having projective dimension $l$ and injective dimension $m$.
\end{itemize}
The bijection is given by $(\Lambda, Q_\Lambda)\mapsto (\End_{\Lambda}(Q), Q)$ and $(A, {}_AQ)\mapsto (\End_A(Q)^{op}, Q)$.
\end{thmintroductionA}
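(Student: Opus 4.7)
The plan is to define the two maps of the claimed bijection explicitly as $F\colon (\Lambda, Q_\Lambda)\mapsto (\End_\Lambda(Q), Q)$ and $G\colon (A, {}_AQ)\mapsto (\End_A(Q)^{\op}, Q)$, verify that each lands in the correct class, and then check that the compositions $F\circ G$ and $G\circ F$ recover the original data up to canonical isomorphism. The two cornerstones of the argument will be: (i) the extended Morita--Tachikawa type correspondence of \cite{CruzMoritaTachikawa} applied to the quasi-(co)generator structure of $Q$, which guarantees that the natural double-endomorphism map is an isomorphism on both sides and that the functors $\Hom_\Lambda(Q,-)$ and $\Hom_A(Q,-)$ behave as partial tilting-type functors in degrees below $m$ and $l$; and (ii) the characterization of relative Auslander--Gorenstein pairs from \cite{CrPs} in terms of the existence and uniqueness of tilting--cotilting modules with high relative (co)dominant dimension with respect to $Q$.

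For the forward direction, starting from an $(n,m,l)$-quasi-precluster tilting right $\Lambda$-module $Q$, I would set $A=\End_\Lambda(Q)$ and view $Q$ as an $(A,\Lambda)$-bimodule. The quasi-(co)generator hypotheses yield the canonical identification $\Lambda \cong \End_A(Q)^{\op}$ and control the derived functors of $\Hom_\Lambda(Q,-)$ up to the required degrees. Combining the self-orthogonality of $Q_\Lambda$ with its prescribed projective dimension $l$ and injective dimension $m$, one verifies that ${}_AA$ has injective dimension at most $n$ and that $D(A_A)$ has projective dimension at most $n$, so that $A$ is Iwanaga--Gorenstein with finitistic dimension at most $n$; the Ext-vanishing and approximation axioms built into the quasi-precluster tilting definition then upgrade this to the inequality $\findim(A)\le$ faithful dimension of $Q$, i.e.\ to the relative $n$-Auslander--Gorenstein pair property.

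For the backward direction, given a relative $n$-Auslander--Gorenstein pair $(A,Q)$, set $\Lambda=\End_A(Q)^{\op}$ and analyse $Q$ as a right $\Lambda$-module. The characterization from \cite{CrPs} furnishes a distinguished tilting--cotilting $A$-module $T$ realising the faithful-dimension bound on $Q$, and the image $\Hom_A(Q,T)$ serves as the transport mechanism through which the homological invariants of $A$ are moved to $\Lambda$. The self-orthogonality of $Q$ over $A$, together with the finiteness of $\pdim {}_AQ$ and $\injdim {}_AQ$, translate via this functor to the self-orthogonality of $Q_\Lambda$ and to the existence of the approximation sequences demanded by Definition~\ref{defnmlpreculuster}. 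The bijectivity of $F\circ G$ and $G\circ F$ then follows from the double-endomorphism isomorphisms on each side.

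The main obstacle, I expect, is the precise degree-bookkeeping controlled by the numerical constraint $n\ge m+l+2$. This inequality is not cosmetic: it ensures that the Ext-degrees encoding self-orthogonality of $Q$, the Ext-degrees governing the higher approximation sequences, and the Ext-degrees involved in the projective/injective dimensions of $Q$ occupy disjoint ranges, so that a vanishing condition on one side transports to a vanishing condition on the other without collision. Concretely, translating $\Ext^i_A(Q,Q)=0$ through $\Hom_A(Q,-)$ using the quasi-(co)generator presentation produces terms of the form $\Ext^{j}_\Lambda(Q,-)$ shifted by $l$ or $m$, and only in the range $1\le j\le n-(m+l+1)$ do these shifts land in the controlled regime. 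The technical core of the proof will therefore be a careful spectral-sequence/long-exact-sequence analysis, combined with the tilting--cotilting framework of \cite{CrPs}, to pin down exactly which Ext groups vanish and to match the two sets of axioms term by term.
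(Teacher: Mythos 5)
Your overall strategy — define both assignments, check that each lands in the correct class, and deduce bijectivity from the higher Morita--Tachikawa correspondence of \cite{CruzMoritaTachikawa} — is exactly the paper's, which realises the two verifications as Propositions~\ref{Prop5dot4} and~\ref{Prop5dot6} and then observes that both maps are restrictions of the Morita--Tachikawa bijection (so the compositions do not need to be checked separately). However, your diagnosis of what the ``technical core'' is, and hence your plan for it, is off in a way that would not close.

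For the direction $(\Lambda,Q_\Lambda)\mapsto(\End_\Lambda Q,{}_AQ)$, the step ``one verifies that ${}_AA$ has injective dimension at most $n$'' does \emph{not} come out of a spectral-sequence or degree-collision analysis. The engine the paper uses is Li--Zhang's Lemma~\ref{lemLiZhang}: taking a minimal projective resolution $P_\bullet\to Q$ over $\Lambda$ and using $\Ext^i_\Lambda(Q,Q)=0$ for $1\le i\le n-2$, one gets the exact sequence of left $A$-modules
\[
0\to \Hom_\Lambda(Q,Q)\to \Hom_\Lambda(P_0,Q)\to\cdots\to\Hom_\Lambda(P_{n-m-1},Q)\to D\Hom_\Lambda(Q,\tau_{n-m-1}Q)\to 0.
\]
Condition~(iv) of Definition~\ref{defnmlpreculuster} then bounds $\pdim_A\Hom_\Lambda(Q,\tau_{n-m-1}Q)\le m$, hence $\injdim_A D\Hom_\Lambda(Q,\tau_{n-m-1}Q)\le m$, and feeding this through the sequence (whose middle terms lie in $\add{}_AQ$, with $\injdim_A Q=m$) yields $\injdim{}_AA\le n$. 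A parallel argument on $DQ$ using condition~(v) gives $\injdim A_A<\infty$, whence $A$ is Gorenstein and $\injdim A_A=\injdim{}_AA\le n$. Without this specific lemma nothing in your plan bounds $\injdim{}_AA$.

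Two further slips. First, you refer to ``the self-orthogonality of $Q_\Lambda$'' and to ``its prescribed projective dimension $l$ and injective dimension $m$'': but condition~(i) only gives $\Ext^i_\Lambda(Q,Q)=0$ for $1\le i\le n-2$ (genuine self-orthogonality $Q\in Q^\perp$ lives on the $A$-side), and $\pdim_AQ=l$, $\injdim_AQ=m$ are recovered on the $A$-side from $Q_\Lambda$ being an $l$-quasi-generator and $m$-quasi-cogenerator via the higher Morita--Tachikawa correspondence — they are not dimensions of $Q$ over $\Lambda$. Second, for the direction $(A,{}_AQ)\mapsto(\Lambda,Q_\Lambda)$ your sketch ``the tilting--cotilting module furnishes the transport'' glosses over the actual work: Lemmas~\ref{lemma5dot1} and~\ref{lemma5dot3} compute $\tau^-_{n-l-1}Q$ and $\tau_{n-m-1}Q$ explicitly as $\Hom_A(X_{n-l},Q)$ and its dual, starting from the $\add Q$-coresolution of $DA$, and derive the finite $\add Q$-(co)dimensions required by conditions~(iv) and~(v). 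These identifications are the content you would have to supply, and they cannot be side-stepped by invoking the tilting--cotilting characterisation alone.
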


Based on the above theorem we have in Figure \ref{HigherMTfigure} a hierarchy of correspondences in representation theory of finite-dimensional algebras. Here, a generalization is drawn as an arrow from the special case to the general case:

%\medskip

\begin{figure}[htb]
	\centering
	\begin{tikzpicture}
		[> = latex', auto,
		block/.style ={rectangle,draw=black, thick,
			align=flush center, rounded corners,
			minimum height=4em},
		FIT/.style args = {#1/#2}{draw=#1, thick, fit=#2,
			inner xsep=-2cm, inner ysep=1.5mm}
		]
		\matrix [column sep=10mm,row sep=10mm]
		{
			% row 1
			 \node[] (MT) {Morita--Tachikawa correspondence}; & \\[-2em]
		%	\node[text depth = 2.5cm] (main){Morita--Tachikawa correspondence}; 
			\node [] (AC) {Auslander correspondence}; &  \node [] (AS) {Auslander-Solberg correspondence};\\
			\node [] (IC) {Iyama correspondence}; &  \node [] (IS) {Iyama-Solberg correspondence};
			\\[0em]
			& \node [] (AIS) {Higher dimensional };\\[-3em]
			\node [] (LZ) {Li-Zhang correspondence};	& \node[] (AIStwo) {Auslander-Iyama-Solberg};\\[-3em]
			& \node[] (AISthree) {correspondence};\\
		};
		
		\node[FIT=red/(MT) (IS),draw=black,inner ysep=3mm, inner xsep=7mm, xshift=1em] {};

		\draw[->] (AC) -- (AS);
		\draw[->] (IC) -- (IS);
		\draw[->] (AC) -- (IC);
		\draw[->] (AS) -- (IS);
		\draw[->] (IS) -- (AIS);
		\draw[->] (LZ) -- (AIStwo);
		
		\draw [->] (IS) --  node[anchor=east, above=2pt ] {} ++(3.2,0) |- (MT);
		
   \end{tikzpicture}
	\caption{\label{HigherMTfigure}Specialisations of the Higher Morita-Tachikawa correspondence} 
\end{figure}
The significance of our new correspondence is that on one hand it unifies the previous correspondences but on the other, it provides new examples from modular representation theory. 
In particular, following \cite{CE}, this correspondence shows that all Temperley--Lieb algebras of infinite global dimension admit a quasi-precluster tilting module (see Example \ref{TLexemplo}).
It is well-known that the blocks of Schur algebras of $\operatorname{GL}_p$ of degree $p$ (over characteristic $p$) have finite representation-type (see \cite{zbMATH00098815}) and also that all blocks of Schur algebras of finite representation-type are higher Auslander algebras (see \cite{E2}).
Hence, the expectation is that there are way more Schur algebras (potentially from other classical groups) that should fit in our setup, and in particular we expect that many algebras arising as quotients of group algebras of symmetric groups possess quasi-precluster tilting modules.

The last contribution of our quasi-precluster tilting theory is on Cohen-Macaulay theory. For an Iwanaga-Gorenstein algebra $A$, we denote by $\CM(A)$
%\[
%\CM(A) = \{X\in A\lsmod \ | \ \Ext_A^i(X,A)=0 \ %\text{for all} \ i>0 \}
%\]
the category of  (left) Cohen-Macaulay $A$-modules. We write $\underline{\CM}(A)$ for the stable category of $\CM(A)$ and it is a well-known fact that $\CM(A)$ is a triangulated category. Given a relative $n$-Auslander--Gorenstein pair $(A, Q)$, the problem is to describe the category of Cohen-Macaulay $A$-modules. Based on relative homological algebra due to Auslander--Solberg, and following a similar strategy employed in \cite{IS} we establish our second main result. 
This consists of a description of Cohen-Macaulay $A$-modules via the quasi-precluster tilting module $Q$.

\begin{thmintroductionB} \textnormal{(see Theorem~\ref{thm8dot5})}	
Let $(A, Q)$ be a relative $n$-Auslander--Gorenstein pair with \mbox{$_{A}Q\in Q^\perp$} and $n\geq m+l+2$, where $m:=\injdim_{A} Q$ and $l:=\pdim_{A} Q$. Write $\Lambda=\End_A(Q)^{op}$. Then the functor $\Hom_\Lambda(-, Q)$ induces a duality between ${}^{\perp_m}  Q\cap Q^{\perp_{n-m-2}}$ and $\CM(A)$, where the former denotes the category
${ \{X\in \smod{\Lambda} \ | \ \Ext_\Lambda^i(X, Q)=0=\Ext_\Lambda^j(Q, X), 1\leq i\leq m, 1\leq j\leq n-m-2\}}$. In particular, there is a triangle equivalence$\colon$ 
\[
\xymatrix{
{}^{\perp_m}  \underline{Q\cap Q}^{\perp_{n-m-2}} \ar[r]^{ \ \ \ \ \simeq} &  \underline{\CM}(A^{op})}
\]
where ${}^{\perp_m}  \underline{Q\cap Q}^{\perp_{n-m-2}}$ is the category ${}^{\perp_m}  Q\cap Q^{\perp_{n-m-2}}$ modulo the ideal generated by $\add{Q}$.		
\end{thmintroductionB}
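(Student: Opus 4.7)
The plan is to follow the strategy that Iyama and Solberg use to describe Cohen--Macaulay modules over $n$-minimal Auslander--Gorenstein algebras \cite{IS}, now adapted to the relative setting where the self-orthogonal module ${}_AQ$ has possibly nonzero projective dimension $l$ and injective dimension $m$. Set $\mathcal{X}:={}^{\perp_m}Q\cap Q^{\perp_{n-m-2}}\subseteq \smod \Lambda$ and consider the pair of contravariant functors $F:=\Hom_\Lambda(-,Q)\colon \smod \Lambda \to A\lsmod$ and $G:=\Hom_A(-,Q)\colon A\lsmod \to \smod \Lambda$, related by the evaluation maps $X\to GF(X)$ and $M\to FG(M)$. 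The goal is to show that $F$ and $G$ restrict to mutually quasi-inverse dualities $\mathcal{X}\leftrightarrow \CM(A)$ and then descend the result to the stable quotients.

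First I would record the double-centralizer property: because $(A,Q)$ is a relative $n$-Auslander--Gorenstein pair with $n\geq 2$, the canonical map $A\to \End_\Lambda(Q)$ is an isomorphism, so $F(Q)=A$ and $F$ sends $\add Q$ to $\add A=\mathsf{proj}\,A$ (and dually for $G$). The heart of the argument is showing $F(\mathcal{X})\subseteq \CM(A)$. Given $X\in \mathcal{X}$, the $(n,m,l)$-quasi-precluster tilting structure of $Q$ over $\Lambda$, available by Theorem~A applied to $(A,Q)$, provides both a left and a right $\add Q$-resolution of $X$ of the appropriate length. Splicing these produces a long exact sequence in $\smod \Lambda$ around $X$ with all terms in $\add Q$; applying $F$ yields a long complex of finitely generated projective $A$-modules. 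The defining Ext-vanishing of $\mathcal{X}$, combined with $Q\in Q^\perp$, $\pdim_A Q=l$ and $\injdim_A Q=m$, is then used to verify that $\Hom_A(-,A)$ preserves exactness of this complex in every positive degree, which shows that $F(X)$ is Gorenstein-projective and hence (by the Iwanaga--Gorenstein hypothesis on $A$) Cohen--Macaulay.

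Full faithfulness of $F|_{\mathcal{X}}$ is obtained by dimension-shifting along the two-sided $\add Q$-resolution of $X$ to reduce the natural comparison $\Hom_\Lambda(X,X')\to \Hom_A(FX',FX)$ to the case $X\in \add Q$, where it specialises to the double-centralizer identification above. For essential surjectivity, given $M\in \CM(A)$, one uses the tilting--cotilting module of maximal relative $Q$-(co)dominant dimension constructed in \cite{CrPs} to build a two-sided $\add Q$-resolution of $M$ in $A\lsmod$, applies $G$, and checks that $G(M)\in \mathcal{X}$ and that the counit $FG(M)\to M$ is invertible. Since $F$ exchanges $\add Q$ with $\mathsf{proj}\,A$, the duality descends to a contravariant equivalence $\underline{\mathcal{X}}\simeq \underline{\CM}(A)$. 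Composing with the classical Gorenstein duality $\Hom_A(-,A)\colon \underline{\CM}(A)^{\op}\xrightarrow{\simeq}\underline{\CM}(A^{\op})$ then yields the desired covariant triangle equivalence, compatibility with the shifts coming from the fact that both are given by (co)syzygies and $F$ intertwines projective resolutions on one side with $\add Q$-resolutions on the other.

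The main obstacle is the acyclicity computation at the end of the second step: one must upgrade two \emph{truncated} Ext-vanishing windows on the $\Lambda$-side (up to $m$ on one side and up to $n-m-2$ on the other) into the \emph{full} vanishing $\Ext_A^i(F(X),A)=0$ for every $i>0$. The threshold $n\geq m+l+2$ is precisely calibrated to this: the $m$-vanishing of $\Ext_\Lambda^\bullet(X,Q)$ absorbs the contribution of $\injdim_A Q=m$, the $(n-m-2)$-vanishing of $\Ext_\Lambda^\bullet(Q,X)$ absorbs $\pdim_A Q=l$, and the overlap inside the faithful dimension $n$ closes up into a complete projective resolution of $F(X)$ over $A$. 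Organising this calculation within Auslander--Solberg's relative homological algebra, applied to the exact substructure of $A\lsmod$ determined by $Q$, should make the bookkeeping clean.
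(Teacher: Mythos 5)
Your plan has a genuine gap in its central step. You claim that for arbitrary $X\in\mathcal{X}={}^{\perp_m}Q\cap Q^{\perp_{n-m-2}}$ the $(n,m,l)$-quasi-precluster tilting structure of $Q$ "provides both a left and a right $\add Q$-resolution of $X$", so that applying $\Hom_\Lambda(-,Q)$ gives a complex of finitely generated projective $A$-modules. This is not available: the quasi-precluster tilting structure records properties of $Q$ itself over $\Lambda$ (it is an $l$-quasi-generator, an $m$-quasi-cogenerator, has $\Ext^{1\leq i\leq n-2}_\Lambda(Q,Q)=0$, and certain $\tau$-shifts of $Q$ have finite $\add Q$-(co)resolutions), not finite $\add Q$-resolutions of an arbitrary $X\in\mathcal{X}$. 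Because $Q$ need not be a generator or cogenerator of $\smod\Lambda$, a general $X\in\mathcal{X}$ has no surjective $\add Q$-cover nor injective $\add Q$-envelope, and the two-sided $\add Q$-resolution your argument splices together simply does not exist. The Iyama--Solberg argument you are modelling uses that a precluster tilting module is a generator-cogenerator; dropping that hypothesis is precisely what makes the present setting harder.

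The paper gets around this by running Auslander--Solberg's relative cotilting theory inside the exact structure $F:=F_{\Omega^{n-m-2}(Q)}$ on $\smod\Lambda$, whose relative projectives are $\add(\Lambda\oplus\Omega^{n-m-2}(Q))$ rather than $\add Q$. The key technical input your outline does not identify is Proposition~\ref{Prop8dot1}: that $Q$ is an $F_{\Omega^{n-m-2}(Q)}$-cotilting object. Feeding this into the cotilting duality of Auslander--Solberg yields a duality ${}^{\perp_F}Q\to{}^{\perp}C$ with $C=\Hom_\Lambda(\Lambda\oplus\Omega^{n-m-2}(Q),Q)\cong Q\oplus X_{n-m-2}$, which is of \emph{finite projective dimension but not projective}; the crucial observation is then $\lerp{}_AA\subseteq\lerp C$, which is how the relative duality is connected to $\CM(A)$. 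In particular, a relative $F$-projective resolution of $X$ is sent by $\Hom_\Lambda(-,Q)$ to a complex with terms in $\add(A\oplus X_{n-m-2})$, not $\add{}_AA$, so your "check $\Hom_A(-,A)$ preserves exactness" argument on a putative projective complex would not be applicable. Finally, the identifications ${}^{\perp_F}Q={}^{\perp_m}Q$ and $Q^{\perp_F}=Q^{\perp_{n-m-2}}$ require comparing relative and absolute $\Ext$-groups via \cite[Proposition 2.5]{IS}, another step absent from the proposal. Your threshold heuristics and the final passage to stable categories are in the right spirit, but without Proposition~\ref{Prop8dot1} and the correct relative exact structure the heart of the argument cannot be completed as written.
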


The paper is organised as follows. In Section~\ref{Preliminaries}, we collect results on homological dimensions and the terminology to be used throughout the paper. More precisely, the notion of relative dominant dimension and a relative version of Mueller's theorem are recalled (see Definition~\ref{denreldomdim} and Theorem~\ref{thm313}, respectively).  We recall in Theorem~\ref{higherMT} the higher Morita-Tachikawa correspondence, established in \cite{CruzMoritaTachikawa}, and we prove in Lemma~\ref{lemma2dot7} how we can use the relative dominant dimension to compute the injective or projective dimension of a module. In Section~\ref{section:quasipreclustertilting}, we introduce quasi-precluster tilting modules and we prove Theorem~A. 
From Lemma \ref{lemma5dot1} up to Proposition~\ref{Prop5dot4}, we describe the general properties that a self-orthogonal of a relative Auslander--Gorenstein pair possesses when viewed as a module over its endomorphism algebra. 
This leads us to Definition~\ref{defnmlpreculuster}, where the notion of $(n, m, l)$-quasi-	precluster tilting module is introduced. In Proposition~\ref{Prop5dot6}, we show how from an algebra with a quasi-cluster tilting module we obtain a relative Auslander-Gorenstein pair. The higher dimensional Auslander-Iyama-Solberg correspondence is proved in Theorem~\ref{maintheorem}. Section~\ref{schuralgebra} is devoted to examples. We illustrate Theorem~A by writing explicitly all computations for the Schur algebra $S(2,4)$. In Section~\ref{section: CM}, we prove Theorem~B. After recalling basic facts from the relative homological algebra of Auslander-Solberg \cite{zbMATH00423522}, we prove in Propositions~\ref{Prop8dot1} and~\ref{Prop8dot3} that $Q$ is a relative tilting and cotilting object in certain exact categories related with $\CM(A)$. Using these facts, we prove in Theorem~\ref{thm8dot5} the above equivalence for Cohen-Macaulay $A$-modules. 

\section{Preliminaries}\label{Preliminaries}

Throughout the paper, let $A$ be a finite-dimensional algebra over a field $k$ and $d$ a natural number. We write $A\lsmod$ to denote the category of finitely generated left $A$-modules and $\smod{A}$ to denote the category of finitely generated right $A$-modules. Let  $Q$ be a module in $A\lsmod $. We write $\lerp Q$ to denote $\{Y\in A\lsmod \ | \ \Ext_A^i(Y, Q)=0, \, \forall i>0 \}$, and similarly the notation $Q^{\perp}$ is self explanatory. By $A^{op}$ we mean the opposite algebra of $A$ and $D$ denotes the standard duality ${D=\Hom_{k}(-,k)\colon A\lsmod \to A^{op}\lsmod}$.  Given $X\in A\lsmod$ the additive closure of $X$ is denoted by $\add_A X$ (or just $\add X$, or even $\add X_A$ when $X\in \smod{A}$). Given $M$ in $A\lsmod$, we will denote by $\pdim_{A} M$ (resp. $\injdim_A M$) the projective (resp. injective) dimension of $M$ over $A$. By $\gldim A$ we denote the global dimension of $A$. Given a non-negative integer $m$ and $M\in A\lsmod$, $\Omega_A^mM$ (resp. $\Omega_A^{-m} M$ denotes the $m$-syzygy (resp. $m$-cosyzygy) of $M$.  We use $\nu_A$ to denote the Nakayama functor $D\Hom_A(-, A)$. 

 The \textbf{finitistic dimension} of $A$, denoted as $\findim A$, is defined by $$\findim A=\sup\{ \pdim_A X\colon X\in A\m, \pdim_A X<+\infty\}.$$ 
A finite-dimensional algebra $A$ is an $n$-\textbf{Iwanaga--Gorenstein} whenever both modules ${}_AA$ and $A_A$ have injective dimension at most $n$. 
For Iwanaga--Gorenstein algebras the dimensions $\injdim_A A=\injdim A_A$ coincide (see \citep[Lemma 6.9]{AuslanderReiten}). Moreover, Iwanaga--Gorenstein algebras have finite finitistic dimension and it follows from \citep[VI.5, Lemma 5.5]{zbMATH00707210} that for those we have $\findim A=\injdim_A A$.
 Over an $n$-Iwanaga--Gorenstein algebra the modules in $\lerp A$ are known as \textbf{Gorenstein-projective $A$-modules}. In this setup, these modules are also known as \textbf{Cohen-Macaulay $A$-modules}.

We start by recalling the concept of relative (co)dominant dimension presented in \citep{Cr2}.

\begin{Def}
\label{denreldomdim}
Let $A$ be a finite-dimensional algebra and $Q, X$ in $A\lsmod$.
\begin{enumerate}[\normalfont(i)]
\item We say that the {\bf relative dominant dimension} of $X$ with respect to $Q$ is greater than or equal to $d$, denoted by $Q\ldom_AX\geq d$, if there is an exact sequence 
\[
0\to X\to Q_1\to Q_2\to \cdots \to Q_d, 
\]
with $Q_i\in \add{Q}$, such that the next sequence is exact
\[
 \ \ \ \ \xymatrix{
\Hom_A(Q_d, Q) \ar[r]^{}  & \cdots \ar[r]^{} & \Hom_A(Q_2, Q) \ar[r]^{} & \Hom_A(Q_1, Q) \ar[r]^{} & \Hom_A(X, Q) \ar[r]^{} &  0.
 } 
\]
Otherwise, we say that the {\bf relative dominant dimension} of $X$ with respect to $Q$ is zero.  The {\bf relative dominant dimension} of $X$ with respect to $Q$ is infinite if it is greater than or equal to any natural number. 

\item We say that the {\bf relative codominant dimension} of $X$ with respect to $Q$ is greater than or equal to $d$, denoted by $Q\lcodom_AX\geq d$, if there is an exact sequence $$Q_d\to \cdots\to Q_2\to Q_1\to X \to 0,$$ with $Q_i\in \add{Q}$, such that the next sequence is exact
\[
\ \ \ \ \xymatrix{
\Hom_A(Q, Q_d) \ar[r]^{}  & \cdots \ar[r]^{} & \Hom_A(Q, Q_2) \ar[r]^{} & \Hom_A(Q, Q_1) \ar[r]^{} & \Hom_A(Q, X) \ar[r]^{} &  0.
 } 
\]
Otherwise, we say that the {\bf relative codominant dimension} of $X$ with respect to $Q$ is zero.
\end{enumerate}
\end{Def}

For $Q$ being a projective-injective module, the relative (respectively, co)dominant dimension of $M$ with respect to $Q$ is exactly the (respectively, co)dominant dimension of $M$. It is clear from the definition that $Q\ldom_A X=DQ\lcodom_{A^{op}} DX$ for every $X, Q\in A\lsmod$.
Observe that Gorenstein projective $A$-modules over an Iwanaga--Gorenstein algebra have infinite relative dominant dimension with respect to $A$ (see for example \cite{AuslanderReiten}).

Various useful facts of relative (co)dominant dimension from \cite{Cr2} are recalled later in the paper when they are used. What we definitely need in the sequel is the following relative version of Mueller theorem, for more details see for example \cite[Theorem~3.1.3 and Theorem~3.1.4]{Cr2}.

\begin{Theorem}\label{thm313}
Let $A$ be a finite-dimensional algebra, $Q$ in $A\lsmod$ and $\Lambda=\End_A(Q)^{op}$. 
\begin{enumerate}
\item[\textnormal{$(\alpha)$}] For $M$ in $A\lsmod$ the following statements are equivalent:
\begin{enumerate}[\normalfont(i)]
\item The evaluation map $M\to \Hom_{\Lambda}(\Hom_{A}(M, Q), Q)$ is an isomorphism. 

\item $Q\ldom_A M\geq 2$.
\end{enumerate}
\item[\textnormal{$(\beta)$}] The following statements are equivalent: 
\begin{enumerate}[\normalfont(i)]
\item $Q\ldom_A{A}\geq n$. 

\item $\Ext_\Lambda^i(Q,Q)=0$ for $1\leq i\leq n-2$ and $A\cong \End_{\Lambda}(Q)$, i.e.\ the module $Q$ has the double centralizer property.\end{enumerate}
\end{enumerate}
\end{Theorem}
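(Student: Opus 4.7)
The plan is to exploit the adjoint pair $F = \Hom_A(-,Q)\colon A\lsmod \to \smod{\Lambda}$ and $G = \Hom_\Lambda(-,Q)\colon \smod{\Lambda} \to A\lsmod$, whose unit at a module $M$ is precisely the evaluation map $\mathrm{ev}_M\colon M \to GF(M)$. The crucial preliminary observation is that $F(Q)=\Lambda$, so $F$ restricts to an equivalence from $\add Q$ onto $\add \Lambda_\Lambda$ (the finitely generated projective right $\Lambda$-modules) with quasi-inverse $G$; in particular, $\mathrm{ev}_{Q_0}$ is an isomorphism for every $Q_0\in\add Q$.

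For part $(\alpha)$, if $Q\ldom_A M \geq 2$ then there is an exact sequence $0\to M \to Q_1 \to Q_2$ with $Q_i\in\add Q$ which remains exact after applying $F$. Applying the left-exact functor $G$ to this dualised sequence and comparing to the original via $\mathrm{ev}$ produces a commutative ladder in which the maps at $Q_1$ and $Q_2$ are isomorphisms; the Five Lemma then forces $\mathrm{ev}_M$ to be an isomorphism. Conversely, if $\mathrm{ev}_M$ is an isomorphism, I would take a projective presentation $P_1\to P_0\to F(M)\to 0$ in $\smod{\Lambda}$, realise each $P_i$ as $F(Q_{i+1})$ with $Q_{i+1}\in\add Q$, and apply $G$. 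The resulting left-exact sequence identifies with $0\to M\to Q_1\to Q_2$ via $\mathrm{ev}_M$, and applying $F$ back recovers the original projective presentation by adjunction, so $Q\ldom_A M\geq 2$.

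For part $(\beta)$, I would run the same dualisation twice. Assume $Q\ldom_A A\geq n$, with witnessing exact sequence $0\to A\to Q_1\to\cdots\to Q_n$. Since $F(A)=Q$, applying $F$ yields the start of a projective resolution $F(Q_n)\to\cdots\to F(Q_1)\to Q\to 0$ of $Q$ in $\smod{\Lambda}$. Applying $G$ back to this projective resolution and using that $GF(Q_i)\cong Q_i$ produces a complex $0 \to \End_\Lambda(Q)\to Q_1\to\cdots\to Q_n$ whose $i$-th cohomology for $1\leq i\leq n-2$ computes $\Ext_\Lambda^i(Q,Q)$ and whose kernel at the leftmost spot is $\End_\Lambda(Q)$. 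Comparing this complex with the original (exact) sequence via the canonical algebra map $A\to\End_\Lambda(Q)$ simultaneously delivers $A\cong\End_\Lambda(Q)$ and the vanishing of $\Ext_\Lambda^i(Q,Q)$ for $1\leq i\leq n-2$. The converse proceeds in reverse: starting from a projective resolution $P_{n-1}\to\cdots\to P_0\to Q\to 0$ in $\smod{\Lambda}$, write $P_i = F(Q_{i+1})$ and apply $G$; the hypothesised Ext-vanishing together with the double centralizer property yield exactness of the sequence $0\to A\to Q_1\to\cdots\to Q_n$, while reapplying $F$ recovers the chosen projective resolution, so $Q\ldom_A A\geq n$.

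The most delicate point I anticipate is verifying that the isomorphism $A\cong\End_\Lambda(Q)$ extracted from the kernel computation is really the canonical algebra map $a\mapsto (q\mapsto aq)$, so that it genuinely witnesses the double centralizer property rather than merely an abstract $k$-linear isomorphism; this will follow from the naturality of the unit $\mathrm{ev}$, but will require spelling out the bimodule structures carefully.
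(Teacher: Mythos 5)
The paper does not prove Theorem~\ref{thm313}; it is recalled verbatim from \cite[Theorems 3.1.3 and 3.1.4]{Cr2}, so there is no in-paper proof to compare against. That said, your argument is correct and is the standard double-dualisation proof of the (relative) Mueller theorem: the two essential verifications you flag are exactly the ones that carry the weight. For $(\alpha)$, $(\mathrm{i})\Rightarrow(\mathrm{ii})$, the triangle identity $F(\mathrm{ev}_M)\circ\epsilon_{F(M)}=\mathrm{id}_{F(M)}$ (where $\epsilon\colon\mathrm{id}\to FG$ is the other unit of the contravariant adjunction) is what guarantees that applying $F$ to $0\to M\to G(P_0)\to G(P_1)$ returns a sequence isomorphic to the chosen projective presentation $P_1\to P_0\to F(M)\to 0$, hence exact; it is worth writing this out, since it is the one place where a naive ``apply $F$ and hope'' would not obviously work. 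For $(\beta)$, the point you raise about the double centralizer map is handled precisely by the observation that $\mathrm{ev}_A\colon A\to GF(A)=\Hom_\Lambda(\Hom_A(A,Q),Q)\cong\End_\Lambda(Q)$ sends $a$ to $\phi\mapsto\phi(a)$, which under the identification $\Hom_A(A,Q)\cong Q$ is exactly $q\mapsto aq$; naturality of $\mathrm{ev}$ then makes the ladder comparison in your argument give both the canonical isomorphism and the Ext-vanishing in one stroke, as you describe. One small remark: as stated, part $(\beta)$ is only sensible for $n\geq 2$ (for $n\leq 1$ the Ext-condition is vacuous but $Q\ldom_A A\geq 1$ holds trivially without implying double centralizer), and your argument correctly uses $n\geq 2$ implicitly when you invoke part $(\alpha)$ to conclude $\mathrm{ev}_A$ is an isomorphism.
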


In view of Theorem \ref{thm313}, the relative dominant dimension of $A$ can be computed in terms of exact sequences $0\rightarrow A\rightarrow Q_1\rightarrow \cdots$ with $Q_i\in \add Q$ so that the next sequence under $\Hom_A(-, Q)$ is a minimal projective resolution of $Q\cong \Hom_A(A, Q)$ over $\Lambda:=\End_A(Q)^{op}$. Indeed, pick a minimal projective resolution of $\Hom_A(A, Q)$, then applying $\Hom_\Lambda(-, Q)$ yields an exact sequence of the desired form if and only if $Q\ldom_A A\geq n$ thanks to Theorem \ref{thm313}. This fact is also valid for computing $Q\ldom_A X$ for an arbitrary $X\in A\m$, but for that one needs to use the general case of relative Mueller's theorem developed in \cite[Theorem~3.1.3 and Theorem~3.1.4]{Cr2}.

The following notion was introduced in \cite{CrPs}.

\begin{Def}\label{defAusGorpair}
Let $A$ be a finite-dimensional algebra and $Q\in A\lsmod$ such that $Q\in Q^{\perp}$. The pair $(A, Q)$ is called a {\bf relative $n$-Auslander--Gorenstein pair} if the algebra $A$ is an $n$-Iwanaga--Gorenstein algebra satisfying the following
\[ 
\injdim_A A\leq n\leq Q\ldom_A A.
\]

The pair $(A, Q)$ is called a {\bf relative $n$-Auslander pair} if it is a relative $n$-Auslander--Gorenstein pair and $A$ has finite global dimension.
\end{Def}

The following quasi-(co)generation notion was introduced by the first author in \cite{CruzMoritaTachikawa} as a first step towards understanding the behaviour of objects with the property $M\ldom_A A=+\infty$.

\begin{Def}
Let $A$ be a finite-dimensional algebra over a field and let $M\in {A}\lsmod$.
\begin{enumerate}[\normalfont(i)]
\item We say that $M$ is an {\bf $n$-quasi-generator} of $A\lsmod$ if $n$ is the minimal integer such that there exists an exact sequence 
\[
\xymatrix{
0 \ar[r]^{} & A \ar[r] & M_0\ar[r] & M_1\ar[r] & \cdots \ar[r]^{} & M_n \ar[r]^{} &  0} 
\]
which remains exact under $\Hom_A(-,M)$ and every $M_i\in \add_AM$.

\item We say that $M$ is an {\bf $n$-quasi-cogenerator} of $A\lsmod$ if $n$ is the minimal integer such that there exists an exact sequence 
\[
\xymatrix{
0 \ar[r]^{} & M_n \ar[r] & \cdots \ar[r] & M_1\ar[r] & M_0 \ar[r]^{} & DA \ar[r]^{} &  0} 
\]
which remains exact under $\Hom_A(M,-)$ and every $M_i\in \add_AM$.
\end{enumerate}
\end{Def}

By setting $n=0$, we recover the notions of generator and cogenerator, respectively. The importance of the quasi-(co)generator notion is justified by the higher Morita-Tachikawa correspondence presented below, which is due to the first author (see \cite{CruzMoritaTachikawa}).

\begin{Theorem}
\label{higherMT}
There is a bijection between:
\begin{itemize}
\item pairs $(B, M)$ where $B$ is a finite-dimensional algebra and $M$ is an $n$-quasi-generator of $\smod{B}$;

\item pairs $(A, M)$ where $A$ is a finite-dimensional algebra with %relative dominant dimension 
$M\ldom_A A\geq 2$, $\pdim _AM=n$ and $\Ext_A^{i>0}(M, M)=0$.
\end{itemize}
The bijection is given by $(B, M)\mapsto (\End_{B}(M), M)$ and $(A, {_AM})\mapsto (\End_A(M)^{op}, M)$.
\end{Theorem}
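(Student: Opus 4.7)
The plan is to mimic the classical Morita--Tachikawa argument, rephrased in the language of relative dominant dimension via Theorem~\ref{thm313}$(\beta)$ applied at $n=2$. The workhorses are the contravariant functors $\Hom_B(-,M)\colon \smod B \to A\lsmod$ and $\Hom_A(-,M)\colon A\lsmod \to \smod B$, together with the natural evaluation maps $X\to \Hom_A(\Hom_B(X,M),M)$ and $Y\to \Hom_B(\Hom_A(Y,M),M)$; these are isomorphisms on $\add M_B$ and on $\add {}_A A$ respectively, by additivity from $\Hom_B(M,M)=A$ and $\Hom_A(A,M)=M$.

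Forward direction $(B,M)\mapsto (A:=\End_B(M),\, {}_A M)$. Applying $\Hom_B(-,M)$ to the defining sequence
\[
0 \to B \to M_0 \to \cdots \to M_n \to 0
\]
witnessing the $n$-quasi-generator property, which by hypothesis stays exact, yields an exact sequence
\[
0 \to \Hom_B(M_n,M) \to \cdots \to \Hom_B(M_0,M) \to M \to 0
\]
in $A\lsmod$ whose terms all lie in $\add {}_A A$. This is a projective resolution of ${}_A M$ of length at most $n$, so $\pdim_A M \leq n$. Now apply $\Hom_A(-,M)$ to the truncated resolution: the $i$-th term is identified via evaluation with $M_i$, and the resulting complex is the truncation of the original exact sequence after $B$. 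Reading off cohomology gives $\Ext_A^i(M,M)=0$ for $i>0$ and $\Hom_A(M,M)\cong\ker(M_0\to M_1)=B$, i.e.\ the double-centralizer isomorphism $B\cong\End_A(M)^{op}$. Theorem~\ref{thm313}$(\beta)$ at $n=2$, where the $\Ext$ condition is vacuous, then yields $M\ldom_A A\geq 2$.

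Reverse direction $(A,M)\mapsto (B:=\End_A(M)^{op},\, M_B)$. Choose a projective resolution $0\to P_n\to\cdots\to P_0\to M\to 0$ of length exactly $n=\pdim_A M$. Self-orthogonality makes $\Hom_A(-,M)$ preserve exactness, producing
\[
0 \to B \to \Hom_A(P_0,M) \to \cdots \to \Hom_A(P_n,M) \to 0
\]
in $\smod B$ with all terms in $\add M_B$. The hypothesis $M\ldom_A A\geq 2$ together with Theorem~\ref{thm313} supplies the double centralizer $A\cong\End_B(M)$, so evaluation is an isomorphism on $\add {}_A A$; termwise application of $\Hom_B(-,M)$ turns the new sequence back into the original projective resolution, which is exact. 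Hence the new sequence stays exact under $\Hom_B(-,M)$ and $M_B$ is a quasi-generator of index at most $n$.

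Taking both directions together forces the equalities: if $\pdim_A M<n$ in the forward setting, then the reverse construction applied to $(A,M)$ would produce a quasi-generator sequence of length $\pdim_A M<n$, contradicting the minimality of $n$; symmetrically, a quasi-generator sequence shorter than $\pdim_A M$ in the reverse setting would yield a projective resolution violating the definition of $\pdim_A M$. The two constructions are then mutually inverse via the double-centralizer identifications $A\cong\End_B(M)$ and $B\cong\End_A(M)^{op}$ built into both sides. The main obstacle I anticipate is purely notational bookkeeping: keeping the left/right module conventions straight, checking that each evaluation map is invoked precisely on the class of modules where it is an isomorphism, and verifying ring-theoretic compatibility in the double-centralizer identifications.
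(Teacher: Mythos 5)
The paper does not prove Theorem~\ref{higherMT}; it is stated as a result of the first author and cited directly from \cite{CruzMoritaTachikawa}, so there is no in-paper proof to compare against. Evaluated on its own terms, your argument is correct and follows the expected Morita--Tachikawa-style route: push the quasi-generator sequence through $\Hom_B(-,M)$ to obtain a length-$n$ $\add{}_AA$-resolution of ${}_AM$; push back through $\Hom_A(-,M)$ using the evaluation isomorphisms on $\add M_B$ to recover the truncated original sequence, reading off $\Ext_A^{>0}(M,M)=0$ and the canonical isomorphism $B\cong\End_A(M)^{\op}$; then invoke Theorem~\ref{thm313}$(\beta)$ at $n=2$ (where the $\Ext$ hypothesis is vacuous) to convert the double-centralizer property into $M\ldom_AA\geq 2$, with the reverse direction being the mirror image. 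Your closing bootstrap, passing through both constructions to force the quasi-generator index and $\pdim_AM$ to coincide, is valid because each construction cannot increase the corresponding numerical invariant. The only genuine care points are the ones you flag: one must check that the isomorphisms $B\to\End_A(M)^{\op}$ and $A\to\End_B(M)$ are the \emph{canonical} evaluation maps (not merely abstract isomorphisms), since Theorem~\ref{thm313}$(\beta)$ concerns the double-centralizer map itself.
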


For every $A$-module $M$, $\add M$ is functorially finite (see for instance \cite{zbMATH03749214}), so we can consider relative dimensions with respect to $\add Q$.

\begin{Def}
Let $A$ be a finite-dimensional algebra over a field and let $Q\in {A}\lsmod$. 
\begin{enumerate}[\normalfont(i)]
\item A left $A$-module $M$ has {\bf relative $\add{Q}$-dimension} at most $m$, denoted by $\ddim_{ \, \add{Q}}(M)\leq m$, if there
exists an exact sequence 
\[
\xymatrix{
0 \ar[r]^{} & Q_m' \ar[r] & \cdots \ar[r]^{} & Q_1' \ar[r]^{} & Q_0' \ar[r]^{} & M \ar[r]^{} &  0} 
\]
which remains exact under $\Hom_A(Q,-)$ and $Q_i'\in \add_{A}Q$.

\item A left $A$-module $M$ has {\bf relative $\add{Q}$-codimension} at most $l$, denoted by $\codim_{ \, \add{Q}}(M)\leq l$, if there
exists an exact sequence
\[
\xymatrix{
0 \ar[r]^{} & M \ar[r] &  Q_0'' \ar[r] & Q_1'' \ar[r] & \cdots \ar[r]^{} & Q_l'' \ar[r]^{} & 0} 
\]
which remains exact under $\Hom_A(-,Q)$ and $Q_i''\in \add_{A}Q$.
\end{enumerate}
\end{Def}

In particular, $\ddim_{ \, \add{Q}}(DA)=n$  exactly when $Q$ is an $n$-quasi-cogenerator of $A\lsmod$.
Of course, by fixing $Q=A$ in the previous definition, we recover the absolute projective dimension and absolute injective dimension. We also need in the sequel the following standard homological fact.

\begin{Lemma}
\label{inductionprojdimension}
Let $0\to X_{n-l}\to Q_{n-l-1}\to \cdots \to Q_1\to Q_0\to X_0\to 0$ be an exact sequence in ${A}\lsmod$ with $Q_i\in \add{Q}$. Assume that $\pdim {_AX_0}\leq n$ and $\pdim {_AQ}=l$ with $n>l$. Then $\pdim {_AX_{n-l}}\leq l$.
\end{Lemma}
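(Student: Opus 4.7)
The plan is to split the given exact sequence into short exact sequences and apply the standard dimension-shifting inequality for projective dimensions coming from the long exact sequence in $\Ext$.

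Concretely, set $X_{k+1} := \ker(Q_k \to Q_{k-1})$ for $0\leq k\leq n-l-1$, where we agree to put $Q_{-1}:=X_0$. This produces short exact sequences
\[
0\to X_{k+1}\to Q_k\to X_k\to 0
\]
with $\pdim{_AQ_k}= l$, and the leftmost term $X_{n-l}$ coincides with the module from the statement. Recall that for any short exact sequence $0\to Y\to Z\to W\to 0$ in $A\lsmod$, the long exact sequence in $\Ext_A(-,N)$ gives the inequality
\[
\pdim{_AY}\leq \max\!\bigl(\pdim{_AZ},\,\pdim{_AW}-1\bigr).
\]

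Next I would prove by induction on $k$ that $\pdim{_AX_k}\leq n-k$ for all $0\leq k\leq n-l$. The base case $k=0$ is the assumption $\pdim{_AX_0}\leq n$. For the inductive step, the above inequality applied to the $k$-th short exact sequence gives
\[
\pdim{_AX_{k+1}}\leq \max\!\bigl(\pdim{_AQ_k},\,\pdim{_AX_k}-1\bigr)\leq \max(l,\,n-k-1).
\]
As long as $k+1\leq n-l$, one has $n-k-1\geq l$, so the maximum equals $n-k-1$, yielding $\pdim{_AX_{k+1}}\leq n-k-1$ and continuing the induction. Specialising to $k=n-l-1$ produces $\pdim{_AX_{n-l}}\leq l$, as desired.

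There is no real obstacle here; this is dimension shifting. The only point requiring a small check is that the induction terminates exactly at the correct index and that the edge case $n=l+1$ is covered (in which case $n-l=1$ and the single short exact sequence $0\to X_1\to Q_0\to X_0\to 0$ already gives $\pdim{_AX_1}\leq \max(l,n-1)=l$). Note also that the hypothesis $n>l$ is used precisely to ensure $n-k-1\geq l$ throughout the induction, so that the maximum collapses to the correct term.
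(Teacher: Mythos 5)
Your proof is correct; it is the standard dimension-shifting argument via the inequality $\pdim{_AY}\leq\max(\pdim{_AZ},\pdim{_AW}-1)$ applied along the chain of short exact sequences. The paper itself gives no proof for this lemma, labelling it a ``standard homological fact,'' and your argument is precisely the one being implicitly invoked. (A minor nitpick: for $Q_k\in\add Q$ one only has $\pdim{_AQ_k}\leq l$, not equality, but your inequality chain uses only the upper bound, so nothing is affected.)
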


For modules with higher relative dominant dimension with respect to $Q$ we can relate their absolute injective dimension using relative $\add Q$-dimensions.

\begin{Lemma}\label{lemma2dot7}
Let $Q, X\in A\lsmod$ with $Q\ldom A\geq 2$. Then, the following assertions hold.
\begin{enumerate}[\normalfont(i)]
\item If $X\in Q^\perp$ and $Q\lcodom_A X\geq 2$, then $\injdim_A X=\dim_{\, \add Q_\Lambda} D\Hom_A(Q, X)$.
		
\item If $X\in \lerp Q$ and $Q\ldom_A X\geq 2$, then $\pdim_A X=\codim_{ \, \add{Q}}D\Hom_A(X, Q)$.
	\end{enumerate} 
\end{Lemma}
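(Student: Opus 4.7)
The plan is to derive both equalities by tensoring a minimal resolution of $X$ with $Q$ (or $DQ$) and inverting the construction via $\Hom_\Lambda(Q,-)$ (or $\Hom_\Lambda(-, Q)$), using the double centralizer property $A\cong \End_\Lambda(Q)$ that follows from $Q\ldom_A A\geq 2$ by Theorem~\ref{thm313}$(\beta)$.

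For (i), the starting identifications are the tensor-hom natural isomorphism $D\Hom_A(Q, X)\cong DX\otimes_A Q$ in $\smod{\Lambda}$ together with the Ext-Tor duality $\Tor_i^A(DX, Q)\cong D\Ext_A^i(Q, X)$, which by the hypothesis $X\in Q^\perp$ vanishes for $i>0$. Taking a minimal projective resolution $0\to P_n\to\cdots\to P_0\to DX\to 0$ in $\smod A$ with $n=\pdim_{A^{op}} DX=\injdim_A X$ and applying $-\otimes_A Q$ therefore produces an exact sequence
\[
0\to P_n\otimes_A Q\to \cdots\to P_0\otimes_A Q\to D\Hom_A(Q, X)\to 0
\]
in $\smod{\Lambda}$ with $P_i\otimes_A Q\in \add Q_\Lambda$. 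Applying $\Hom_\Lambda(Q,-)$ returns, via the adjunction unit and the double centralizer, the modules $P_i$, and on the final term one obtains $\Hom_\Lambda(Q, D\Hom_A(Q, X))\cong D(Q\otimes_\Lambda \Hom_A(Q, X))$; the assumption $Q\lcodom_A X\geq 2$ combined with the tensor-version of Theorem~\ref{thm313}$(\alpha)$ makes the coevaluation $Q\otimes_\Lambda\Hom_A(Q, X)\to X$ an isomorphism, so this last term equals $DX$. Hence $\Hom_\Lambda(Q,-)$ restores the original projective resolution, confirming that the displayed sequence is $\Hom_\Lambda(Q,-)$-exact and giving $\ddim_{\add Q_\Lambda}D\Hom_A(Q,X)\leq \injdim_A X$. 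The reverse inequality is obtained by running the argument backwards: $\Hom_\Lambda(Q,-)$ applied to any $\Hom_\Lambda(Q,-)$-exact $\add Q_\Lambda$-resolution of $D\Hom_A(Q, X)$ yields a projective resolution of $DX$ in $\smod A$, whence $\injdim_A X\leq \ddim_{\add Q_\Lambda}D\Hom_A(Q, X)$.

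Statement (ii) is handled by the symmetric argument with $DQ\otimes_A-$ in place of $-\otimes_A Q$, the identification $D\Hom_A(X, Q)\cong DQ\otimes_A X$, the vanishing $\Tor_{>0}^A(DQ, X)\cong D\Ext^{>0}_A(X, Q)=0$ coming from $X\in \lerp Q$, and the inverse functor $\Hom_\Lambda(-, Q)$ controlled by the evaluation isomorphism $X\cong \Hom_\Lambda(\Hom_A(X, Q), Q)$ of Theorem~\ref{thm313}$(\alpha)$ combined with the double centralizer. The main delicate step is keeping the bimodule/side-conventions straight — ensuring that the coevaluation and evaluation maps are isomorphisms exactly where the hypotheses $Q\lcodom_A X\geq 2$ and $Q\ldom_A X\geq 2$ provide it, and that no analogous condition is required of the projectives $P_i$, which holds because the double centralizer applies uniformly to all of $\add A$.
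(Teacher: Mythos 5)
Your proof is correct and follows essentially the same path as the paper's, just phrased through the duality $D\Hom_A(Q,X)\cong DX\otimes_AQ$: where the paper applies $\Hom_A(Q,-)$ to a minimal injective resolution of $X$ and then dualizes (verifying $\Hom_\Lambda(-,DQ)$-exactness by an explicit commutative diagram), you apply $-\otimes_AQ$ to a minimal projective resolution of $DX$ (using $\Tor_{>0}^A(DX,Q)\cong D\Ext_A^{>0}(Q,X)=0$) and verify $\Hom_\Lambda(Q,-)$-exactness via naturality of the adjunction unit. These are the same argument conjugated by $D$, resting on the same two pillars — vanishing from $X\in Q^\perp$ for one direction, and the double-centralizer together with the (co)evaluation isomorphism from $Q\lcodom_A X\geq 2$ for the other.

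One small point worth making explicit: when you say that applying $\Hom_\Lambda(Q,-)$ ``restores the original projective resolution,'' what is being used is that the unit $\eta\colon \mathrm{id}\to\Hom_\Lambda(Q,-\otimes_AQ)$ on $\smod A$ is a natural transformation that is an isomorphism on $\add A$ (by the double centralizer) and on $DX$ (by $Q\lcodom_AX\geq 2$); naturality is what turns termwise isomorphisms into an isomorphism of complexes, which is the content of the paper's commutative diagram. Your blind proof is sound, but stating this explicitly removes the only gap a careful reader might flag.
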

\begin{proof}
Suppose that $\injdim_A X<+\infty$.  Let 
\begin{align}
	\xymatrix{0 \ar[r] &X \ar[r] &I_0 \ar[r] & \cdots \ar[r] & I_d \ar[r] & 0}
\end{align}be the minimal injective resolution 
of $X$. Thanks to $\Ext_A^{i>0}(Q, X)=0$ applying $\Hom_A(Q, -)$ to the minimal injective resolution of $X$  yields the exact sequence 
\begin{align}
	\xymatrix{0 \ar[r] & \Hom_A(Q, X)\ar[r] & \Hom_A(Q, I_0) \ar[r] & \cdots \ar[r] & \Hom_A(Q, I_d)\ar[r] & 0}. \label{eq23}
\end{align} Applying $\Hom_{\Lambda}(-, DQ)$ we get the commutative diagram
\begin{equation*}
	\xymatrix@C=0.42cm@R=0.4cm{0 \ar[r] & \Hom_{\Lambda}(\Hom_A(Q, I_d), DQ) \ar[r]  & \cdots \ar[r] & \Hom_{\Lambda}(\Hom_A(Q, I_0), DQ) \ar[r] & \Hom_{\Lambda}(\Hom_A(Q, X),DQ) \ar[r] & 0 \\
0 \ar[r]	& \Hom_A(I_d, DA) \ar[r]   \ar^{\cong}[u]& \cdots \ar[r] & \Hom_A(I_0, DA) \ar[r]  \ar^{\cong}[u]& \Hom_A(X, DA) \ar[r]  \ar^{\cong}[u]& 0 \\
0 \ar[r] & DI_d \ar[r] \ar^{\cong}[u] & \cdots \ar[r] & DI_0  \ar^{\cong}[u] \ar[r] & DX \ar[r]  \ar^{\cong}[u] & 0}
\end{equation*} Since the bottom row is exact, then so (\ref{eq23}) remains exact under $\Hom_{\Lambda}(-, DQ)$.  Hence,
\begin{equation}
	\xymatrix{0 \ar[r] & D\Hom_A(Q, I_d) \ar[r] & \cdots \ar[r]& D\Hom_A(Q, I_0) \ar[r] & D\Hom_A(Q, X) \ar[r] & 0}
\end{equation} is exact and it remains exact under $\Hom_{\Lambda}(Q, -)$. Moreover, $D\Hom_A(Q, I_i)\in \add Q_\Lambda$. So, this shows that $\dim_{\, \add Q_\Lambda} D\Hom_A(Q, X)\leq \injdim_A X$. 

Conversely, assume that $\dim_{\, \add Q_\Lambda} D\Hom_A(Q, X)\leq d$ for some non-negative number $d$. Hence, $\pdim \Hom_\Lambda(Q, D\Hom_A(Q, X))_{A}\leq d$. Moreover, $$\Hom_\Lambda(Q, D\Hom_A(Q, X))\cong \Hom_\Lambda(\Hom_A(Q, X), DQ)\cong \Hom_\Lambda(\Hom_A(DX, DQ), DQ)\cong DX$$ since $Q\lcodom_A X\geq 2$. It follows that $\pdim DX_A\leq d$ and $\injdim_A X\leq d$. So (1) follows. Dually, (2) holds.
\end{proof}

We write 
$\tau_{n}=\tau \Omega_A^{n-1}\colon {A}\lusmod \to {A}\losmod$ and $\tau^{-}_{n}=\tau^{-} \Omega_A^{-(n-1)}\colon {A}\losmod \to {A}\lusmod$ for the $n$-Auslander-Reiten translation \cite{HigherARtheory}, where ${A}\lusmod$ denotes the stable category of $A\lsmod$ modulo projective modules and ${A}\losmod$ denotes the stable category of $A\lsmod$ module injective modules.

We recall the following result of Li and Zhang \cite{LiZhang}, see also the proof of Proposition~3.9 in \cite{ChenKoenig}.

\begin{Lemma} \textnormal{(\cite[Lemma~4.5]{LiZhang})}
\label{lemLiZhang}
Let $A$ be a finite-dimensional algebra. Let $M$ be an $A$-module such that $\Ext^i_A(M,M)=0$ for all $1\leq i\leq n-1$. Consider a minimal projective resolution $\cdots \to P_1\to P_0\to M\to 0$ of $M$. Then we have the following exact sequence:
\[
\xymatrix@C=0.7cm{
0 \ar[r]^{} & \Hom_A(M,M) \ar[r] & \Hom_A(P_0, M) \ar[r] & \cdots \ar[r]^{} & \Hom_A(P_n, M) \ar[r]^{} & D\Hom_A(M, \tau_n M) \ar[r]^{} & 0}. 
\]
\end{Lemma}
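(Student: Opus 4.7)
The plan is to establish the exact sequence by splitting the argument at the term $\Hom_A(P_n, M)$.

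For the initial portion of the sequence, exactness at $\Hom_A(M, M)$ and $\Hom_A(P_0, M)$ follows from the left-exactness of $\Hom_A(-, M)$ applied to $P_1 \to P_0 \to M \to 0$, while exactness at $\Hom_A(P_i, M)$ for $1 \le i \le n-1$ amounts to the vanishing $\Ext_A^i(M, M) = 0$ assumed in the hypothesis.

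The heart of the proof is computing the cokernel of the last differential $\Hom_A(P_{n-1}, M) \to \Hom_A(P_n, M)$ and identifying it with $D\Hom_A(M, \tau_n M)$. Here I would exploit the Auslander transpose. Since the resolution is minimal, the tail $P_n \to P_{n-1} \to \Omega_A^{n-1} M \to 0$ is a minimal projective presentation of $\Omega_A^{n-1} M$. Applying $(-)^* := \Hom_A(-, A)$ yields by definition the Auslander transpose $\Tr(\Omega_A^{n-1}M)$ as a cokernel; dualising by $D$ then gives the left-exact sequence
\[
0 \to \tau_n M \to \nu_A P_n \to \nu_A P_{n-1}.
\]
Applying $\Hom_A(M, -)$ and invoking the natural isomorphism $\Hom_A(M, \nu_A P) \cong D\Hom_A(P, M)$, valid for any finitely generated projective $P$ --- which itself arises from $\nu_A P = D\Hom_A(P, A)$, the tensor-hom adjunction $\Hom_A(M, \Hom_k(-, k)) \cong D((-) \otimes_A M)$, and the canonical identification $\Hom_A(P, A) \otimes_A M \cong \Hom_A(P, M)$ --- transforms the displayed sequence into
\[
0 \to \Hom_A(M, \tau_n M) \to D\Hom_A(P_n, M) \to D\Hom_A(P_{n-1}, M).
\]
Dualising once more by $D$ produces the right-exact sequence
\[
\Hom_A(P_{n-1}, M) \to \Hom_A(P_n, M) \to D\Hom_A(M, \tau_n M) \to 0,
\]
which simultaneously yields exactness at $\Hom_A(P_n, M)$ and the desired surjection onto the final term. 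Splicing this with the initial portion completes the proof.

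The main administrative step is verifying that the map $\Hom_A(P_{n-1}, M) \to \Hom_A(P_n, M)$ recovered through this chain of dualities truly coincides with the differential in the original Hom-complex; this is guaranteed by the naturality of $\Hom_A(M, \nu_A(-)) \cong D\Hom_A(-, M)$ in the projective variable, applied to the map $P_n \to P_{n-1}$ from the resolution. Besides this, the only substantive ingredient is the Auslander transpose presentation of $\tau$, whose correctness is ensured precisely by the minimality of the projective resolution.
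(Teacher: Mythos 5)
The paper does not supply a proof of this lemma: it is quoted as \cite[Lemma~4.5]{LiZhang}, with an additional pointer to the proof of Proposition~3.9 in \cite{ChenKoenig}. So there is no in-paper argument against which to compare. Your proof is correct and is the standard Auslander--Reiten--theoretic argument one would expect (and, in substance, the one in the cited sources): the left half of the sequence is just $\Hom_A(-,M)$ applied to the projective resolution, with the cohomology groups $\Ext_A^i(M,M)$ for $1\le i\le n-1$ killed by hypothesis; the right half comes from the minimal projective presentation $P_n\to P_{n-1}\to\Omega_A^{n-1}M\to 0$, which gives $0\to\tau_n M\to\nu_A P_n\to\nu_A P_{n-1}$, then $\Hom_A(M,-)$, the natural identification $\Hom_A(M,\nu_A P)\cong D\Hom_A(P,M)$ for $P$ finitely generated projective, and one final application of $D$. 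You correctly single out both places where minimality of the resolution is essential (it gives the minimal presentation of $\Omega_A^{n-1}M$, hence the transpose) and note the naturality check ensuring the two constructions of the differential $\Hom_A(P_{n-1},M)\to\Hom_A(P_n,M)$ agree, which is the only point one might otherwise gloss over. The splice at $\Hom_A(P_n,M)$ works because exactness at $\Hom_A(P_{n-1},M)$ is covered by $\Ext_A^{n-1}(M,M)=0$ from the first half, while exactness at $\Hom_A(P_n,M)$ and surjectivity onto $D\Hom_A(M,\tau_n M)$ are both provided by the right-exact sequence from the second half.
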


\section{Quasi-precluster tilting modules}
\label{section:quasipreclustertilting}

In this section, we introduce the concept of quasi-precluster tilting modules and we establish the main result that relative Auslander--Gorenstein pairs can be characterised by the existence of these objects. 

We start by showing that for certain natural numbers $d$ the objects $\nu \Omega^d Q$ and $\tau_{d-1} Q$ have finite relative $\add Q$-codimension when $Q$ is a quasi-generator arising from a relative Auslander--Gorenstein pair.

\begin{Lemma}\label{lemma5dot1}
	Let $(A, Q)$ be a relative $n$-Auslander--Gorenstein pair with $\pdim_A Q=l$, $\injdim_A Q=m$ and $_{A}Q\in Q^\perp$. Write $\Lambda=\End_A(Q)^{op}.$ Assume that $n\geq m+l+2$, then there exists
	an exact sequence 
	\[
	\xymatrix@C=0.5cm{
		0 \ar[r]^{} &\tau_{n-l-1}^-Q \ar[r] & Q_0 \ar[r] & Q_1 \ar[r] & \cdots \ar[r]  &Q_l \ar[r] & 0 } 
	\] which remains exact under $\Hom_{\Lambda}(-, Q)$ with $Q_i\in \add Q_\Lambda$.
\end{Lemma}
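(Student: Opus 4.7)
The plan is to construct the desired sequence by applying $\Hom_A(-,Q)$ to a minimal projective resolution of a suitably chosen $A$-module $X$, whose image under $\Hom_A(-,Q)$ turns out to be $\tau_{n-l-1}^-Q$. The module $X$ itself emerges from a dual form of Li--Zhang's lemma applied to $Q_\Lambda$ over $\Lambda$.

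As a warm-up, applying $\Hom_A(-,Q)$ to a minimal projective resolution $0\to P_l\to\cdots\to P_0\to Q\to 0$ of ${}_AQ$ (of length $\pdim_AQ=l$) and using ${}_AQ\in Q^\perp$ already yields the exact sequence
\[
0\to\Lambda\to\Hom_A(P_0,Q)\to\cdots\to\Hom_A(P_l,Q)\to 0
\]
in $\smod\Lambda$, with each middle term in $\add Q_\Lambda$ (since $P_i\in\add_AA$) and remaining exact under $\Hom_\Lambda(-,Q)$ by the double centralizer $A\cong\End_\Lambda(Q)$ of Theorem~\ref{thm313}. This has the required format but with $\Lambda$ at the left end in place of $\tau_{n-l-1}^-Q$.

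Next, Theorem~\ref{thm313} and $Q\ldom_A A\ge n$ give $\Ext^i_\Lambda(Q,Q)=0$ for $1\le i\le n-2$. A dual of Lemma~\ref{lemLiZhang}---derived by applying Li--Zhang over $\Lambda^{op}$ to $DQ$ and then dualising by $D$---produces, for the minimal injective coresolution $0\to Q\to I^0\to I^1\to\cdots$ of $Q_\Lambda$, an exact sequence in $A\lsmod$
\[
0\to X\to\tilde Q_{n-l-1}\to\cdots\to\tilde Q_0\to DA\to 0,
\]
where $X:=\Hom_\Lambda(\tau_{n-l-1}^-Q,Q)$ and each $\tilde Q_j\in\add_AQ$. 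Using $\pdim_A DA\le\injdim_A A\le n$, $\pdim_A\tilde Q_j\le l$ and $\injdim_AQ=m$, a standard dimension-shift along this sequence, combined with the hypothesis $n\ge m+l+2$, shows that $\pdim_AX\le l$ and $X\in\lerp Q$ (that is, $\Ext^i_A(X,Q)=0$ for $i\ge 1$). Taking a minimal projective resolution of $X$ in $A\lsmod$ of length at most $l$, applying $\Hom_A(-,Q)$ and using relative Mueller (Theorem~\ref{thm313}) to identify $\Hom_A(X,Q)\cong\tau_{n-l-1}^-Q$ then yields the required exact sequence.

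The main obstacle lies in verifying the three homological properties of $X$: the projective-dimension bound $\pdim_AX\le l$, the orthogonality $X\in\lerp Q$, and the reflexivity $\Hom_A(X,Q)\cong\tau_{n-l-1}^-Q$ (which in turn requires $Q\ldom_\Lambda\tau_{n-l-1}^-Q\ge 2$). The assumption $n\ge m+l+2$ enters crucially in all three: it positions the Li--Zhang index $n-l-1$ safely inside the Ext-vanishing range $[1,n-2]$, and balances the injective dimension $m$ and projective dimension $l$ of ${}_AQ$ so that the cosyzygy shifts along the dual Li--Zhang sequence land within the desired bounds.
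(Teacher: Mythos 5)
Your proposal follows the same overall architecture as the paper's proof — build an exact sequence of the form $0\to X\to \tilde Q_{n-l-1}\to\cdots\to\tilde Q_0\to DA\to 0$ with $\tilde Q_j\in\add_A Q$, deduce $\pdim_A X\le l$ and $X\in\lerp Q$, take a minimal projective resolution of $X$, and apply $\Hom_A(-,Q)$. Your route to the initial exact sequence is genuinely different: the paper builds it from $Q\lcodom_A DA\ge n$ and then computes transposes, whereas you produce it (and simultaneously label the kernel as $X=\Hom_\Lambda(\tau^-_{n-l-1}Q,Q)$) by dualizing the Li--Zhang sequence for $DQ$ over $\Lambda$. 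That reorganisation is valid and slightly more economical for producing the sequence. However, two steps you treat as routine are exactly where the paper has to work, and your sketch leaves genuine gaps there.

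First, the identification $\Hom_A(X,Q)\cong\tau^-_{n-l-1}Q$. In your setup $X=\Hom_\Lambda(\tau^-_{n-l-1}Q,Q)$, so the desired isomorphism is the assertion that the evaluation map $\tau^-_{n-l-1}Q\to\Hom_A(\Hom_\Lambda(\tau^-_{n-l-1}Q,Q),Q)$ is bijective, which by the relative Mueller theorem over $\Lambda$ is exactly the condition $Q\ldom_\Lambda\tau^-_{n-l-1}Q\ge 2$. You name this condition but never establish it, and it is essentially part of the conclusion of the lemma (the lemma produces a sequence $0\to\tau^-_{n-l-1}Q\to Q_0\to\cdots\to Q_l\to 0$ staying exact under $\Hom_\Lambda(-,Q)$, which implies $Q\ldom_\Lambda\tau^-_{n-l-1}Q\ge 2$ once $l\ge 1$). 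So invoking it as an input is circular unless you give an independent argument. The paper avoids this entirely: it first shows $Q\ldom_A X_{n-l}\ge 2$ from the short piece $0\to X_{n-l}\to Q_{n-l-1}\to Q_{n-l-2}\to X_{n-l-2}\to 0$ and $\Ext^{i>0}_A(X_{n-l-2},Q)=0$, and then identifies $\Hom_A(X_{n-l},Q)$ as $\Tr\,\Hom_A(Q,X_{n-l-2})\cong\Tr D\,\Omega^{-(n-l-2)}Q=\tau^-_{n-l-1}Q$ by applying $\Hom_\Lambda(-,\Lambda)$ and projectivization to the minimal projective resolution of $DQ$. Also note that even if one knew $Q\ldom_A X\ge 2$, the resulting iso $X\cong\Hom_\Lambda(\Hom_A(X,Q),Q)$ only gives $\Hom_\Lambda(\tau^-_{n-l-1}Q,Q)\cong\Hom_\Lambda(\Hom_A(X,Q),Q)$; one cannot cancel $\Hom_\Lambda(-,Q)$ to conclude $\Hom_A(X,Q)\cong\tau^-_{n-l-1}Q$.

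Second, you never check that the final exact sequence $0\to\Hom_A(X,Q)\to\Hom_A(P_0,Q)\to\cdots\to\Hom_A(P_l,Q)\to 0$ remains exact under $\Hom_\Lambda(-,Q)$, which is part of the statement. The paper proves this by a projectivization diagram comparing this sequence with the minimal projective resolution of $X$, and that diagram uses both $Q\ldom_A P_i\ge Q\ldom_A A\ge 2$ and the already-established $Q\ldom_A X_{n-l}\ge 2$. In your proposal, $Q\ldom_A X\ge 2$ is also unproven, so this closing step is missing.

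In short: the dual Li--Zhang construction is a correct and pleasant alternative to the paper's $Q\lcodom_A DA\ge n$ construction, and the dimension-shift arguments for $\pdim_A X\le l$ and $X\in\lerp Q$ are fine, but you need to supply independent proofs of $Q\ldom_A X\ge 2$ (or equivalently of $Q\ldom_\Lambda\tau^-_{n-l-1}Q\ge 2$) and of the $\Hom_\Lambda(-,Q)$-exactness of the final sequence; as written, the proposal assumes what it is trying to prove at the key step.
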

\begin{proof}
	By definition, $Q\ldom_A A\geq n\geq \injdim {}_A A$. Hence, we also have $Q\lcodom_A DA\geq n\geq \pdim_A DA$, see for example \citep[Corollary 3.1.5]{Cr2}. So, there exists an exact sequence
	\begin{equation}
			\xymatrix@C=0.5cm{
			0 \ar[r]^{} &X_n \ar[r] & Q_{n-1} \ar[r] & Q_{n-2} \ar[r] & \cdots \ar[r]  &Q_0 \ar[r] &  DA\ar[r] &  0 }  \label{eq7}
	\end{equation} which remains exact under $\Hom_A(Q, -)$ with $Q_i\in \add_A Q$ for $0\leq i\leq n-1$. Here, $X_0$ denotes ${}_A DA$ and $X_{i+1}$ is defined recursively as the kernel of the map $Q_i\rightarrow X_{i}$ for $0\leq i\leq n-1$.

 We aim to compute $\tau^-_{n-l-1}Q\cong \Tr D \Omega^{-(n-l-2)} Q$ using the exact sequence (\ref{eq7}). We can assume without loss of generality that (\ref{eq7}) is a minimal exact sequence in the sense that the induced exact sequence 
 	\begin{equation}
 	\xymatrix@C=0.5cm{
 		 \Hom_A(Q, Q_{n-1}) \ar[r] & \Hom_A(Q, Q_{n-2}) \ar[r] & \cdots \ar[r] & \Hom_A(Q, Q_0) \ar[r] & \Hom_A(Q, DA)\cong DQ \ar[r] & 0} \label{eq5'}
 \end{equation} 
 is the beginning of the minimal projective resolution of $DQ$ as $\Lambda$-module. So applying $D\Hom_A(Q, -)$ to the exact sequence (\ref{eq7}) we obtain the exact sequence
  	\begin{equation}
 	\xymatrix@C=0.5cm{
 		0 \ar[r] & Q \ar[r] & D\Hom_A(Q, Q_{0}) \ar[r] & \cdots \ar[r] & D\Hom_A(Q, Q_{n-l-3}) \ar[r] & D\Hom_A(Q, X_{n-l-2}) \ar[r] & 0}
 	\label{eq8}
 \end{equation} if $n-l-2>0$. Hence, $D\Hom_A(Q, X_{n-l-2})\cong \Omega^{-(n-l-2)}Q$  if $n-l-2>0$. Otherwise, we have $\Omega^{-(n-l-2)} Q\cong Q\cong D\Hom_A(Q, X_{n-l-2})$, and so we conclude that the following identification holds for all cases 
 $$\Omega^{-(n-l-2)}Q\cong D\Hom_A(Q, X_{n-l-2}).$$

On the other hand, using dimension shifting together with $Q\in Q^\perp$ on exact sequence (\ref{eq7}) we obtain 
\begin{equation}
	\Ext_A^i(X_{n-l-2}, Q)\cong \Ext_A^{i+n-l-2}(X_0, Q)=\Ext_A^{i+n-l-2}(DA, Q)=0, \quad  \forall i>0,
	\end{equation}since $n-l-2\geq m=\injdim_AQ$. Moreover, $X_i\in \lerp Q$ for every $i=n-l-2, \ldots, n$. So applying $\Hom_A(-, Q)$ to the exact sequence $0\rightarrow X_{n-l}\rightarrow Q_{n-l-1}\rightarrow Q_{n-l-2}\rightarrow X_{n-l-2}\rightarrow 0$ yields the exact sequence
	\begin{equation}
	\xymatrix@C=0.45cm{
		0\ar[r] &\Hom_A(X_{n-l-2}, Q)  \ar[r] & \Hom_A(Q_{n-l-2}, Q) \ar[r] & \Hom_A(Q_{n-l-1}, Q) \ar[r] & \Hom_A(X_{n-l}, Q) \ar[r] & 0 
	}. \label{eq10}
\end{equation} In particular, this infers that $Q\ldom_A X_{n-l}\geq 2$.  
Therefore, by applying $\Hom_\Lambda(-, \Lambda)$ to (\ref{eq5'}) we get the commutative diagram with exact rows
\begin{equation*}
	\begin{tikzcd}[column sep=0.75cm]
		\Hom_\Lambda(\Hom_A(Q, X_{n-l-2}), \Lambda) \ar[r, hookrightarrow] & \Hom_\Lambda(\Hom_A(Q, Q_{n-l-2}), \Lambda)  \ar[r] \ar[ld, "\cong"]& \Hom_\Lambda(\Hom_A(Q, Q_{n-l-1}), \Lambda) \ar[ld, "\cong"]  \\
		 \Hom_A(Q_{n-l-2}, Q) \ar[r] & \Hom_A(Q_{n-l-1}, Q) \ar[r, twoheadrightarrow] & \Hom_A(X_{n-l}, Q)
	\end{tikzcd},
\end{equation*} where the bijectivity of the vertical maps follows from projectivization together with $\Lambda=\End_A(Q)^{op}$. Using the above commutative diagram, we then obtain 
$$\Hom_A(X_{n-l}, Q)\cong \Tr \Hom_A(Q, X_{n-l-2})\cong \Tr DD \Hom_A(Q, X_{n-l-2}) \cong \Tr D \Omega^{-(n-l-2)}Q =\tau^-_{n-l-1}Q.$$

 Since $\pdim_A DA=\pdim_A X_0\leq n$ and $\pdim_A Q\leq l$ we obtain that $\pdim_A X_{n-l}\leq l$ using  Lemma \ref{inductionprojdimension}. Let 
 \begin{equation}
 		\xymatrix@C=0.5cm{
 		0\ar[r] & P_{l}\ar[r] & \cdots \ar[r] & P_0 \ar[r] & X_{n-l} \ar[r] & 0
 	}\label{eq12}
 \end{equation} be a minimal projective resolution of $X_{n-l}$. Since $X_{n-l}\in \lerp Q$ applying $\Hom_A(-, Q)$ yields the exact sequence
 \begin{equation}
	\xymatrix@C=0.5cm{
		0\ar[r] & \Hom_A(X_{n-l}, Q)\ar[r] & \Hom_A(P_0, Q) \ar[r] & \cdots \ar[r] & \Hom_A(P_l, Q) \ar[r] & 0.
	}\label{eq13}
\end{equation} Observe that  $\Hom_A(P_i, Q)\in \add Q_\Lambda$ for all $i=0, \ldots l$.  It remains to check that the exact sequence (\ref{eq13}) remains exact under $\Hom_{\Lambda}(-, Q)$.
This follows immediately from the following commutative diagram and the exactness of (\ref{eq12}):
\begin{equation*}
	\begin{tikzcd}
		\Hom_\Lambda(\Hom_A(P_l, Q), Q) \ar[r] & \cdots \ar[r] & \Hom_\Lambda(\Hom_A(P_0, Q), Q) \ar[r] & \Hom_{\Lambda}(\Hom_A(X_{n-l}, Q), Q) \\
		P_l \ar[r, hookrightarrow] \ar[u, "\cong"] & \cdots \ar[r] & P_0 \ar[r, twoheadrightarrow] \ar[u, "\cong"] & X_{n-l} \ar[u, "\cong"]
	\end{tikzcd}
\end{equation*}
Here, the vertical maps are isomorphisms by Theorem \ref{thm313} since $Q\ldom_A P_i\geq Q\ldom_A A\geq 2$ and $Q\ldom_A X_{n-l}\geq 2$. So, our desired exact sequence is the exact sequence (\ref{eq13}) replacing $\Hom_A(X_{n-l}, Q)$ with $\tau_{n-l-1}^-Q $.
\end{proof}

\begin{Remark}
	It follows from Lemma \ref{lemma5dot1} that if $Q$ is a projective module with injective dimension $m$ and $(A, Q)$ is a relative $n$-Auslander--Gorenstein pair with $n\geq m+2$, then $\tau^-_{n-1}Q\in \add Q_{\Lambda}$.
\end{Remark}

\begin{Remark}\label{Rmk7dot1}%
Assume that $(A, Q)$ is a relative $n$-Auslander--Gorenstein pair with $\pdim_A Q=l$, $\injdim_A Q=m$ and $_{A}Q\in Q^\perp$. Suppose that $n\geq m+l+2$. Then, it follows from Lemma \ref{lemma5dot3} that $\tau_{n-m-1}Q\cong D\tau_{n-m-1}^-(DQ) \cong D\Hom_A(\tilde{X}_{n-m}, DQ)\cong D\Hom_A(Q, D\tilde{X}_{n-m})$, where $\tilde{X}_{n-m}$ fits into an exact sequence $0\rightarrow \tilde{X}_{n-m}\rightarrow \tilde{Q}_{n-m-1}\rightarrow \cdots \rightarrow \tilde{Q}_0\rightarrow DA\rightarrow 0$ which remains exact under $\Hom_A(DQ, -)$ with $\tilde{Q}_i\in \add DQ$.
\end{Remark}

The dual version of Lemma \ref{lemma5dot1} is the following:

\begin{Lemma}\label{lemma5dot3}
		Let $(A, Q)$ be a relative $n$-Auslander--Gorenstein pair with $\pdim_A Q=l$, $\injdim_A Q=m$ and $_{A}Q\in Q^\perp$. Write $\Lambda=\End_A(Q)^{op}.$ Assume that $n\geq m+l+2$, then there exists
	an exact sequence 
	\[
	\xymatrix@C=0.5cm{
		0 \ar[r] & Q_m \ar[r] & Q_{m-1} \ar[r] & \cdots \ar[r]  &Q_0 \ar[r]  & \tau_{n-m-1}Q \ar[r]& 0 } 
	\] which remains exact under $\Hom_{\Lambda}(Q,-)$ with $Q_i\in \add Q_{\Lambda}$.
\end{Lemma}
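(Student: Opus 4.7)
The plan is to derive Lemma \ref{lemma5dot3} as the $k$-linear dual of Lemma \ref{lemma5dot1}, applied to the opposite pair $(A^{\op}, DQ)$. The first step is to verify that $(A^{\op}, DQ)$ is itself a relative $n$-Auslander--Gorenstein pair, with the roles of $l$ and $m$ interchanged: $A^{\op}$ is $n$-Iwanaga--Gorenstein since $A$ is; $\Ext_{A^{\op}}^i(DQ, DQ) \cong \Ext_A^i(Q, Q) = 0$ for $i>0$, so $DQ \in (DQ)^\perp$; the standard duality swaps dimensions, giving $\pdim_{A^{\op}} DQ = \injdim_A Q = m$ and $\injdim_{A^{\op}} DQ = \pdim_A Q = l$; and the identity from the paragraph following Definition \ref{denreldomdim}, combined with \citep[Corollary~3.1.5]{Cr2} (already invoked in the proof of Lemma \ref{lemma5dot1}), yields $DQ \ldom_{A^{\op}} A^{\op} = Q \lcodom_A DA \geq n$. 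The numerical condition $n \geq m+l+2$ is symmetric in $m$ and $l$, so it is preserved.

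Next, under duality we have $\End_{A^{\op}}(DQ)^{\op} \cong \End_A(Q) = \Lambda^{\op}$, so Lemma \ref{lemma5dot1} applied to the pair $(A^{\op}, DQ)$ (with endomorphism algebra $\Lambda^{\op}$ and new projective-dimension parameter $m$) produces an exact sequence
\begin{equation*}
0 \to \tau^-_{n-m-1}(DQ) \to R_0 \to R_1 \to \cdots \to R_m \to 0
\end{equation*}
in $\smod{\Lambda^{\op}} = \Lambda\lsmod$, with $R_i \in \add(DQ)_{\Lambda^{\op}}$, which remains exact under $\Hom_{\Lambda^{\op}}(-, DQ)$. Applying $D$ yields
\begin{equation*}
0 \to DR_m \to \cdots \to DR_0 \to D\tau^-_{n-m-1}(DQ) \to 0
\end{equation*}
in $\smod{\Lambda}$, and each $DR_i$ lies in $\add Q_\Lambda$ since $D$ identifies $\add(DQ)_{\Lambda^{\op}}$ with $\add Q_\Lambda$.

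The right endpoint is identified via the standard formal identities $D \circ \Omega^{-j} \cong \Omega^j \circ D$ (dualizing an injective resolution produces a projective resolution) and $D\Tr \cong \Tr D$, which combine to give $D \tau^-_n \cong \tau_n \circ D$ between the relevant stable module categories. Specializing to $DQ \in \smod{\Lambda^{\op}}$ yields $D\tau^-_{n-m-1}(DQ) \cong \tau_{n-m-1} Q$ in $\smod{\Lambda}$, matching the asserted endpoint (and matching the identification recorded in Remark~\ref{Rmk7dot1}).

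Finally, I would promote the exactness under $\Hom_{\Lambda^{\op}}(-, DQ)$ of the original sequence to exactness under $\Hom_\Lambda(Q, -)$ of the dualized one. The key ingredient is the natural isomorphism $\Hom_\Lambda(Q, DX) \cong \Hom_{\Lambda^{\op}}(X, DQ)$ for $X \in \Lambda\lsmod$, both naturally identified with $D(X \otimes_\Lambda Q)$ via tensor-hom adjunction; hence $\Hom_\Lambda(Q, -) \circ D \cong \Hom_{\Lambda^{\op}}(-, DQ)$ as functors on $\Lambda\lsmod$, and the exactness transfers verbatim. The main obstacle in carrying out this plan is purely notational: one must track left/right module conventions carefully through the contravariant duality and verify the $\tau$-commutation in the appropriate ambient category. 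No new homological input beyond Lemma \ref{lemma5dot1} is required.
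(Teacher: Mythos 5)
Your proposal is correct and follows essentially the same route as the paper: pass to the opposite pair $(A^{\op}, DQ)$, apply Lemma~\ref{lemma5dot1} to obtain an exact sequence ending at $\tau^-_{n-m-1}(DQ)$, dualize, and identify $D\tau^-_{n-m-1}(DQ)\cong\tau_{n-m-1}Q$ using $D\Tr\cong\Tr D$ and $D\Omega^{-j}\cong\Omega^{j}D$. The only cosmetic difference is that the paper cites \cite[Remark~4.9]{CrPs} for the fact that $(A^{\op},DQ)$ is again a relative $n$-Auslander--Gorenstein pair, whereas you verify it directly, and the paper leaves the transfer of exactness under $\Hom_\Lambda(Q,-)$ implicit while you spell it out via the natural isomorphism $\Hom_\Lambda(Q,-)\circ D\cong\Hom_{\Lambda^{\op}}(-,DQ)$.
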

\begin{proof}
Observe that $Q\ldom_A A=DQ\ldom_{A^{op}} A$, see for example \citep[Corollary 3.1.5]{Cr2}. Hence \cite[Remark~4.9]{CrPs} yields that $(A^{op}, DQ)$ is a relative $n$-Auslander-Gorenstein pair with $\pdim_{A^{op}} DQ=m$, $\injdim_{A^{op}} DQ=l$ and $n\geq m+l+2$. By Lemma \ref{lemma5dot1}, there exists an exact sequence
		\begin{equation}
	\xymatrix@C=0.5cm{
		0 \ar[r]^{} &\tau_{n-m-1}^-DQ \ar[r] & Q_0' \ar[r] & Q_1' \ar[r] & \cdots \ar[r]  &Q_m' \ar[r] & 0 } \label{eq14}\end{equation}
	which remains exact under $\Hom_{\Lambda^{op}}(-, DQ)$ with $Q_i'\in \add_\Lambda DQ$. Observe that 
 $$D\tau^-_{n-m-1}DQ=D\Tr D \Omega^{-(n-m-2)}DQ\cong D\Tr \Omega^{n-m-2}DDQ\cong D\Tr \Omega^{n-m-2}Q=\tau_{n-m-1}Q.$$ 
 Hence, our desired exact sequence is obtained by applying $D$ to (\ref{eq14}).
\end{proof}

In general, we have the following identification between the Nakayama functor and the syzygies of $Q$ for relative Auslander-Gorenstein pairs $(A, Q)$. 

\begin{Lemma}
	Let $(A, Q)$ be a relative $n$-Auslander--Gorenstein pair with $\pdim_A Q\leq d$, $\injdim_A Q\leq n-d$ and $_{A}Q\in Q^\perp$ for some natural number $d\in \{1, \ldots, n-1\}$. Write $\Lambda=\End_A(Q)^{op}$. Then the following assertions hold.
	\begin{enumerate}[\normalfont(i)]
		\item If $d\geq 2$, then $\add \Omega^d(Q)\oplus \Lambda_\Lambda =\add \nu_\Lambda^-\Omega^{-(n-d)}(Q) \oplus \Lambda_\Lambda$;
		\item If $n-d\geq 2$, then $\add \nu_\Lambda \Omega^d(Q)\oplus D\Lambda = \add D\Lambda \oplus \Omega^{-(n-d)}(Q)$.
	\end{enumerate}
\end{Lemma}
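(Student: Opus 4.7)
The plan is to prove (i); part (ii) follows by applying the same argument to the dualised relative $n$-Auslander--Gorenstein pair $(A^{op}, DQ)$, in which the projective and injective dimensions of $Q$ are swapped (see \cite[Remark~4.9]{CrPs}). The strategy is to give concrete descriptions of both $\Omega^d Q$ and $\Omega^{-(n-d)} Q$ as right $\Lambda$-modules of the form $\Hom_A(-,Q)$ or $D\Hom_A(Q,-)$ applied to suitable $A$-modules, and then to combine these via a compatibility between the Nakayama functors $\nu_A$ on $A\lsmod$ and $\nu_\Lambda$ on $\smod\Lambda$.

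More concretely, from $Q\ldom_A A\geq n$ take an exact sequence $0\to A\to Q^1\to\cdots\to Q^n$ with $Q^j\in\add_A Q$ that is $\Hom_A(-,Q)$-exact. Applying $\Hom_A(-,Q)$ yields the first $n$ terms of a projective resolution of $Q_\Lambda$ in $\smod\Lambda$, and setting $C_d:=\operatorname{coker}(Q^{d-1}\to Q^d)$ (with $Q^0:=A$) one identifies $\Omega^d Q_\Lambda\cong\Hom_A(C_d,Q)$ modulo projective summands. Dually, from $Q\lcodom_A DA\geq n$ take $0\to K_n\to Q_{n-1}\to\cdots\to Q_0\to DA\to 0$ that is $\Hom_A(Q,-)$-exact; applying $\Hom_A(Q,-)$ followed by $D$ yields an injective copresentation of $Q_\Lambda$ of length $n$ in $\smod\Lambda$, giving $\Omega^{-(n-d)}Q_\Lambda\cong D\Hom_A(Q,K_{n-d})$ modulo injective summands. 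The bounds $\pdim_A Q\leq d$ and $\injdim_A Q\leq n-d$ together with $Q\in Q^\perp$ and Lemma~\ref{inductionprojdimension} then control the homological structure of $C_d$ and $K_{n-d}$: splicing the projective resolution $0\to P_l\to\cdots\to P_0\to Q\to 0$ (with $l\leq d$) against the first sequence and, dually, the injective resolution of $Q$ against the second, gives decompositions whose images under the respective functors land in controlled subcategories.

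Finally, one uses that $\Hom_A(-,Q)$ restricts to an equivalence between $\add_A A$ and $\add\Lambda_\Lambda$ that is compatible with the Nakayama functors (via the double centraliser property of $Q$: $\nu_A A=DA$ corresponds under $\Hom_A(-,Q)$ to $\nu_\Lambda\Lambda_\Lambda=D\Lambda_\Lambda$). Combining the identifications above gives a natural isomorphism $\Hom_A(C_d,Q)\cong \nu_\Lambda^-(D\Hom_A(Q,K_{n-d}))$ modulo projective $\Lambda$-summands, which is exactly (i). The main technical obstacle is verifying the Nakayama compatibility in the precise form $\Hom_A(\nu_A^-M,Q)\cong\nu_\Lambda^-(D\Hom_A(Q,M))$ (modulo projective $\Lambda$-summands) for $M$ in a subcategory containing $K_{n-d}$: this requires a careful bookkeeping of the bimodule structures on $Q$, $\Lambda$ and $A$ through the Hom-tensor and duality adjunctions. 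The numerical hypotheses $d\geq 2$ in (i) and $n-d\geq 2$ in (ii) appear because at least two projective (respectively, injective) terms are needed in the relevant resolutions for the Nakayama identification to transport correctly across $\Hom_A(-,Q)$ and $D$.
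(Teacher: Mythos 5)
You correctly identify $\Omega^d Q_\Lambda$ and $\Omega^{-(n-d)}Q_\Lambda$ via the $\add Q$-cosyzygy $X$ of $A$ (your $C_d$) and the $\add Q$-syzygy $Y$ of $DA$ (your $K_{n-d}$), and passing from (i) to (ii) by duality is sound. But the step you propose to make the two sides meet does not exist, and the step that actually does the work is absent from your argument. You posit a Nakayama compatibility $\Hom_A(\nu_A^- M, Q)\cong\nu_\Lambda^-(D\Hom_A(Q, M))$ modulo projective $\Lambda$-summands and intend to apply it with $M=K_{n-d}$; but even if that identity held, you would still need $C_d\cong\nu_A^- K_{n-d}$ modulo $\add Q$, and this is false in general: a cosyzygy of $A$ along $\add Q$ and a syzygy of $DA$ along $\add Q$ are not related by $\nu_A^-$ unless $Q$ is projective-injective.

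The actual bridge, and the place where the Iwanaga--Gorenstein / relative Auslander--Gorenstein hypothesis enters, is \cite[Corollary~6.5]{CrPs}: the tilting module $Q\oplus X$ and the cotilting module $Q\oplus Y$ obtained from these two resolutions satisfy $\add(Q\oplus X)=\add(Q\oplus Y)$. This is a uniqueness/coincidence theorem for the complements of $Q$, not a Hom-tensor manipulation, and it cannot be derived by routine functorial bookkeeping. With it in hand the proof is short: when $d\geq 2$ one has $Q\lcodom_A (Q\oplus X)\geq 2$, so by the relative double centralizer (Theorem~\ref{thm313}) one gets $\Lambda\oplus\Omega^d(Q)\cong\Hom_A(Q\oplus X, Q)\cong\Hom_\Lambda\bigl(\Hom_A(Q, Q\oplus X), \Lambda\bigr)$, while a dual computation gives $\Hom_\Lambda\bigl(\Hom_A(Q, Q\oplus Y), \Lambda\bigr)\cong\Lambda\oplus\nu_\Lambda^-\Omega^{-(n-d)}(Q)$; the two $\add$-closures then agree because $\add(Q\oplus X)=\add(Q\oplus Y)$. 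So the hypotheses $d\geq 2$ and $n-d\geq 2$ are there for this double-centralizer step, not for a Nakayama transport lemma. Without citing the tilting--cotilting coincidence, your plan cannot close.
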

\begin{proof}
	Since $d>0$ there are exact sequences
		\begin{align}
		\xymatrix{
			0 \ar[r] & Y \ar[r] & Q_{n-d-1}'\ar[r] & \cdots \ar[r] & Q_0'\ar[r] & DA\ar[r] & 0 
		} \\ \xymatrix{ 0 \ar[r] & A \ar[r] & Q_0 \ar[r] & \cdots \ar[r] & Q_{d-1} \ar[r] & X \ar[r] & 0
	}
	\end{align} with $Q_i, Q_i'\in \add_AQ$ 
so that the induced exact sequences
\begin{align}
	\xymatrix{
	0 \ar[r] & \Hom_A(Q, Y) \ar[r]& \Hom_A(Q, Q_{n-d-1}') \ar[r] & \cdots \ar[r] & \Hom_A(Q, Q_0') \ar[r] & DQ\ar[r] & 0
	} \\ \xymatrix{ 0 \ar[r] & \Hom_A(X, Q) \ar[r] & \Hom_A(Q_{d-1}, Q) \ar[r] & \cdots \ar[r] & \Hom_A(Q_0, Q)\ar[r] & Q\ar[r] & 0
	}
\end{align}
are the beginning of minimal projective resolutions of $DQ$ and $Q$, respectively.
Hence, $\Hom_A(Q, Y)\cong \Omega^{n-d}(DQ)$ and $\Omega^d(Q)\cong \Hom_A(X, Q)$. By \cite[Corollary~6.5]{CrPs}, we have $\add X\oplus Q=\add Y\oplus Q$.

We will now prove (i). Assume that $d\geq 2$. Then, $Q\lcodom_A Q\oplus X\geq 2$. In such a case, we have the following isomorphisms as right $\Lambda$-modules
\begin{align*}
	\Lambda_\Lambda\oplus \Omega^d(Q)&\cong \Hom_A(Q, Q)\oplus \Hom_A(X, Q) \\
&\cong \Hom_A(Q\oplus X, Q) \\&\cong \Hom_{\Lambda}(\Hom_A(Q, Q\oplus X), \Hom_A(Q, Q)) 
\end{align*}
and 
\begin{align*}
\Hom_\Lambda(\Hom_A(Q, Q\oplus Y), \Hom_A(Q, Q))&\cong \Hom_\Lambda(\Lambda\oplus\Omega^{n-d}(DQ), \Lambda)\\ &\cong \Lambda \oplus \Hom_\Lambda(D\Omega^{-(n-d)}(Q), \Lambda)\\&\cong \Lambda \oplus \nu_\Lambda^-(\Omega^{-(n-d)}(Q)). 
\end{align*} Hence, it follows that $\add \Lambda \oplus \nu_\Lambda^-\Omega^{-(n-d)}(Q)=\add \Lambda \oplus \Omega^d(Q)$.

Assume now instead that $n-d\geq 2$. In such a case $Q\ldom_A Q\oplus X\geq 2$, and so we have the following isomorphisms as right $\Lambda$-modules
\begin{align*}
	D\Lambda\oplus \nu_\Lambda \Omega^d(Q)&\cong D \Lambda\oplus D\Hom_{\Lambda}(\Hom_A(X, Q), \Hom_A(Q, Q))\\&\cong D\Lambda\oplus D\Hom_{\Lambda}(\Hom_A(DQ, DX), \Hom_A(DQ, DQ))\\&\cong D\Hom_A(Q, Q)\oplus D\Hom_A(DX, DQ)\\ &\cong D\Hom_A(Q, Q)\oplus D\Hom_A(Q, X)\\&\cong D\Hom_A(Q, Q\oplus X)
\end{align*}
and \begin{align*}
	D\Hom_A(Q, Q\oplus Y)\cong D\Hom_A(Q, Y)\oplus D\Lambda \cong D\Omega^{n-d}(DQ)\oplus D\Lambda\cong \Omega^{-(n-d)}(Q)\oplus D\Lambda.
\end{align*} So (ii) follows and this completes the proof.
\end{proof}

\begin{Lemma}\label{lemma7dot3}
		Let $(A, Q)$ be a relative $n$-Auslander--Gorenstein pair with $\pdim_A Q\leq d \leq n-2$, $\injdim_A Q\leq n-d$ and $_{A}Q\in Q^\perp$ for some natural number $d\geq 2$. Write $\Lambda=\End_A(Q)^{op}$. Then, there exists an exact sequence 
		\begin{equation}
			\xymatrix{
			0 \ar[r] & Q_{n-d} \ar[r] & \cdots \ar[r] & Q_0 \ar[r] & \nu_\Lambda \Omega^d(Q) \ar[r] & 0
		}
		\end{equation} which remains exact under $\Hom_{\Lambda}(Q, -)$ with $Q_i\in \add Q$.
\end{Lemma}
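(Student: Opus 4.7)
The plan is to realise $\nu_\Lambda\Omega^d(Q)$ as $D\Hom_A(Q, X)$ for a suitable $_AX \in Q^\perp$ with $\injdim_A X \leq n-d$, and then apply Lemma~\ref{lemma2dot7}(i) to extract the claimed resolution. Since $Q\ldom_A A \geq n \geq d$, I would fix an exact sequence
\[
0 \to A \to Q_0 \to \cdots \to Q_{d-1} \to X \to 0
\]
in $A\lsmod$ with $Q_i \in \add_A Q$ that remains exact under $\Hom_A(-,Q)$. Truncating the exact sequences from the preceding lemma reproduces the isomorphism $D\Lambda \oplus \nu_\Lambda\Omega^d(Q) \cong D\Lambda \oplus D\Hom_A(Q, X)$, so Krull--Schmidt gives $\nu_\Lambda\Omega^d(Q) \cong D\Hom_A(Q, X)$ in $\smod{\Lambda}$.

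Next I would verify the homological properties of $X$ needed for Lemma~\ref{lemma2dot7}(i). Setting $K_0 = A$ and $K_i = \operatorname{coker}(K_{i-1} \to Q_{i-1})$ (so $K_d = X$), dimension shifting along the short exact sequences $0 \to K_{i-1} \to Q_{i-1} \to K_i \to 0$, together with $_AQ \in Q^\perp$, yields $\Ext_A^j(Q, X) \cong \Ext_A^{j+d}(Q, A)$ for $j \geq 1$; this vanishes because $\pdim_A Q \leq d$, so $X \in Q^\perp$. A parallel dimension shift starting from $\injdim_A A \leq n$ and $\injdim_A Q \leq n-d$, with $d \geq 2$ ensuring $\max(n-i, n-d) = n-i$ at each step for $1 \leq i \leq d$, gives $\injdim_A X \leq n-d$ (cf.~Lemma~\ref{inductionprojdimension}).

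With $X \in Q^\perp$ and $\injdim_A X \leq n-d$, and granting the remaining hypothesis $Q\lcodom_A X \geq 2$, Lemma~\ref{lemma2dot7}(i) gives $\ddim_{\add Q_\Lambda}(\nu_\Lambda\Omega^d(Q)) = \injdim_A X \leq n-d$; unfolding the definition of relative $\add Q_\Lambda$-dimension then produces an exact sequence $0 \to Q_{n-d} \to \cdots \to Q_0 \to \nu_\Lambda\Omega^d(Q) \to 0$ with $Q_i \in \add Q_\Lambda$ that remains exact under $\Hom_\Lambda(Q, -)$. The main obstacle is the verification of $Q\lcodom_A X \geq 2$: unwinding through the $K_i$-sequences shows that this is equivalent to the Ext-vanishing $\Ext_A^{d-1}(Q, A) = 0 = \Ext_A^d(Q, A)$, which does not follow merely from $\pdim_A Q \leq d$ and must be extracted from the global relative $n$-Auslander--Gorenstein structure, using the length-$n$ exact sequence $0 \to A \to Q_0 \to \cdots \to Q_{n-1} \to Y \to 0$ exact under $\Hom_A(-, Q)$ (available since $n \geq d+2$) combined with the double-centralizer consequences of Theorem~\ref{thm313}. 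As a fallback, the sequence can be built directly from the minimal injective coresolution $0 \to X \to I_0 \to \cdots \to I_{n-d} \to 0$ in $A\lsmod$: apply $\Hom_A(Q, -)$ (exact by $X \in Q^\perp$ and $\Ext_A^{\geq 1}(Q, DA) = 0$) and then $D$ to get intermediate terms $D\Hom_A(Q, I_i) \in \add Q_\Lambda$, with exactness under $\Hom_\Lambda(Q, -)$ at those intermediate positions following from the counit isomorphism $\Hom_\Lambda(Q, D\Hom_A(Q, I)) \cong DI$ for each injective $_AI$, valid via $Q\lcodom_A I \geq n$.
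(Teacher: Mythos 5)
Your structural plan is sound and mirrors the paper's: realise $\nu_\Lambda\Omega^d(Q)$ as $D\Hom_A(Q, X)$ for a suitable $X$ arising from a length-$d$ $\add Q$-coresolution of $A$, check $X\in Q^\perp$ with $\injdim_A X\leq n-d$, and then invoke Lemma~\ref{lemma2dot7}(i). The dimension shift yielding $X\in Q^\perp$ and $\injdim_A X\leq n-d$ is correct, and deriving $\nu_\Lambda\Omega^d(Q)\cong D\Hom_A(Q,X)$ from the $\add$-equality of the preceding lemma plus Krull--Schmidt cancellation of $D\Lambda$ is a legitimate variant of the paper's direct chain of isomorphisms (which uses $Q\ldom_A X\geq 2$ instead). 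But the proof is not complete: the hypothesis $Q\lcodom_A X\geq 2$ of Lemma~\ref{lemma2dot7}(i), which you correctly flag as ``the main obstacle,'' is never actually established.

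The paper sidesteps this obstacle entirely by invoking \cite[Proposition~6.4]{CrPs}, which does not produce an arbitrary $\add Q$-coresolution of $A$ but a specific one whose cokernel $X$ is packaged together with the properties $Q\oplus X$ being $d$-tilting $(n-d)$-cotilting, $Q\ldom_A X\geq n-d\geq 2$, and, crucially, $Q\lcodom_A X\geq d\geq 2$. You instead start from an arbitrary such sequence and then try to verify the codominant-dimension bound by hand; the reduction to $\Ext_A^{d-1}(Q, A)=0=\Ext_A^d(Q, A)$ (for the tail $Q_{d-2}\to Q_{d-1}\to X\to 0$) is correct, but you do not derive those vanishings from the Auslander--Gorenstein structure, only assert that they ``must be extracted.'' They certainly do not follow from $\pdim_A Q\leq d$ alone, so this is a genuine gap, not a routine verification. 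The fallback route inherits the same problem: exactness under $\Hom_\Lambda(Q,-)$ at the intermediate terms $D\Hom_A(Q, I_i)$ does follow from $Q\lcodom_A I_i\geq n$, but the surjectivity onto $\Hom_\Lambda(Q, D\Hom_A(Q, X))$ at the final term again requires identifying this space with $DX$, which is precisely the content of $Q\lcodom_A X\geq 2$; so the fallback does not avoid the missing hypothesis. The fix is to cite \cite[Proposition~6.4]{CrPs} for the choice of $X$ rather than taking $X$ arbitrary.
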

\begin{proof}
By \cite[Proposition~6.4]{CrPs}, without loss of generality, there exists an exact sequence 
\begin{equation}
		\xymatrix{
		0 \ar[r] & A \ar[r] & Q_0 \ar[r] & \cdots \ar[r] & Q_{d-1} \ar[r] & X \ar[r] & 0 }
\end{equation}which remains exact under $\Hom_A(-, Q)$ with the following extra properties:
\begin{itemize}
	\item $Q_i\in \add_A Q$;
	\item $\Hom_A(X, Q)\simeq \Omega^d(Q)$;
	\item $Q\oplus X$ is a $d$-tilting $(n-d)$-cotilting module;
	\item $Q\ldom_A X\geq n-d\geq 2$;
	\item $Q\lcodom_A X\geq d\geq 2$.
\end{itemize}
In particular, $X\in \lerp Q\cap Q^\perp$ and $\injdim_A X\leq n-d$. By Lemma \ref{lemma2dot7}, $\dim_{\, \add Q_\Lambda} D\Hom_A(Q, X)\leq n-d.$ Hence, it is enough to prove that $D\Hom_A(Q, X)\cong \nu_\Lambda \Omega^d(Q)$. This holds true since $Q\ldom_A X\geq 2$. Indeed,
\begin{align*}
	\nu_\Lambda \Omega^d(Q)&\cong D\Hom_{\Lambda}(\Omega^d(Q), \Lambda) \\ &\cong D\Hom_{\Lambda}(\Hom_A(X, Q), \Lambda) \\ & \cong D\Hom_{\Lambda}(\Hom_A(X, Q), \Hom_A(Q, Q))
	\\&\cong D\Hom_{\Lambda}(\Hom_{A}(DQ, DX), \Hom_{A}(DQ, DQ)) \\ & \cong D\Hom_A(DX, DQ)\cong D\Hom_A(Q, X)
\end{align*}
and this completes the proof.
\end{proof}

\begin{Remark}
		Let $(A, Q)$ be a relative $n$-Auslander--Gorenstein pair with $_{A}Q\in Q^\perp$. Assume that $n\geq m+l+2$, where $m:=\injdim_A Q$ and $l:=\pdim_A Q$.  Then, in particular, $n\geq m+2$ and following the same argument as in the proof of Lemma \ref{lemma7dot3} we get $\nu_\Lambda \Omega^{n-m}(Q)\cong D\Hom_A(Q, X)$ (with $d:=n-m$). By Remark \ref{Rmk7dot1}, we infer that $\nu_\Lambda \Omega^{n-m}(Q)\cong \tau_{n-m-1}(Q)$.
\end{Remark}

Combining the previous techniques together with the higher Morita--Tachikawa correspondence (Theorem~\ref{higherMT}) we obtain the following properties for the self-orthogonal module associated with the relative Auslander--Gorenstein pair.

\begin{Prop}\label{Prop5dot4}Let $n, m, l$ be natural numbers so that $n\geq m+l+2$.
		Let $(A, Q)$ be a relative $n$-Auslander--Gorenstein pair with $\pdim_A Q=l$, $\injdim_A Q=m$ and $_{A}Q\in Q^\perp$. Define $\Lambda=\End_A(Q)^{op}$.
		Then the following assertions hold:
		\begin{enumerate}[\normalfont(i)]
			\item $\Ext_{\Lambda}^i(Q, Q)=0$ for $1\leq i\leq n-2$.
			
			\item $Q$ is an $l$-quasi-generator of $\smod{\Lambda}$. 
			
			\item $Q$ is an $m$-quasi-cogenerator of $\smod{\Lambda}$.
			
			\item There exists an exact sequence 
			\[
			\xymatrix{
				0 \ar[r]^{} & Q_m' \ar[r] & \cdots \ar[r]^{} & Q_1' \ar[r]^{} & Q_0' \ar[r]^{} & \tau_{n-m-1}Q \ar[r]^{} &  0} 
			\]
			which remains exact under $\Hom_{\Lambda}(Q,-)$ and $Q_i\in \add Q_{\Lambda}$.
			
			\item There exists an exact sequence 
			\[
			\xymatrix{
				0 \ar[r]^{} & \tau^{-}_{n-l-1}Q \ar[r] &  Q_0'' \ar[r] & Q_1'' \ar[r] & \cdots \ar[r]^{} & Q_l'' \ar[r]^{} & 0} 
			\]
			which remains exact under $\Hom_{\Lambda}(-,Q)$ and $Q_i''\in \add Q_{\Lambda}$.
		\end{enumerate}
\end{Prop}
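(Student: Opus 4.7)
The statement collects five properties of $Q$ viewed as a right $\Lambda$-module, where $\Lambda=\End_A(Q)^{op}$. My plan is to match each item with the appropriate tool already developed. For (i), I would appeal directly to Theorem~\ref{thm313}$(\beta)$: the definition of relative $n$-Auslander--Gorenstein pair yields $Q\ldom_A A\geq n$, which the theorem translates into $\Ext_\Lambda^i(Q,Q)=0$ for $1\leq i\leq n-2$ (together with the double centralizer property, which comes for free but is not needed here).

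For (ii), the correct device is the higher Morita--Tachikawa correspondence, Theorem~\ref{higherMT}. The hypotheses $Q\in Q^\perp$, $\pdim_A Q=l$, and $Q\ldom_A A\geq n\geq m+l+2\geq 2$ match the right-hand side of that bijection, so its conclusion produces $Q$ as an $l$-quasi-generator of $\smod{\Lambda}$; the integer $l$ is automatically minimal since under the correspondence it coincides with $\pdim_A Q$.

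For (iii), my strategy is duality. Using \cite[Remark~4.9]{CrPs}, as at the start of the proof of Lemma~\ref{lemma5dot3}, the pair $(A^{op}, DQ)$ is a relative $n$-Auslander--Gorenstein pair with $\pdim_{A^{op}} DQ = m$ and $DQ\in (DQ)^\perp$. Applying (ii) to this dual pair, and noting that $\End_{A^{op}}(DQ)^{op}\cong \Lambda^{op}$, I obtain an exact sequence $0\to\Lambda\to N_0\to\cdots\to N_m\to 0$ in $\Lambda\lsmod$ with $N_i\in\add_\Lambda DQ$ that stays exact under $\Hom_\Lambda(-, DQ)$. Applying the standard duality $D$ yields an exact sequence $0\to DN_m\to\cdots\to DN_0\to D\Lambda\to 0$ in $\smod{\Lambda}$ with $DN_i\in\add Q_\Lambda$, and the natural isomorphism $\Hom_\Lambda(Q, DN_i)\cong D(Q\otimes_\Lambda N_i)\cong \Hom_\Lambda(N_i, DQ)$ converts the exactness condition under $\Hom_\Lambda(-, DQ)$ into the required exactness under $\Hom_\Lambda(Q, -)$.

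Finally, (iv) and (v) are verbatim the conclusions of Lemma~\ref{lemma5dot3} and Lemma~\ref{lemma5dot1}, so I would simply cite them. The main obstacle I anticipate is the side-of-action bookkeeping in (iii): once one keeps track of the fact that $D$ swaps $\Lambda\lsmod$ with $\smod{\Lambda}$ and applies the right adjunction at each step, the quasi-cogenerator structure falls out mechanically from the quasi-generator structure on the dual side, but this is the single step where a careless treatment could mix up left and right modules or lose minimality of $m$.
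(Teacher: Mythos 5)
Your proof is essentially the same as the paper's: (i) via Theorem~\ref{thm313}, (ii) via Theorem~\ref{higherMT}, and (iv), (v) by direct citation of Lemmas~\ref{lemma5dot3} and~\ref{lemma5dot1}. The only deviation is (iii): the paper invokes the full higher Morita--Tachikawa correspondence of \cite{CruzMoritaTachikawa} (which covers quasi-cogenerators as well as quasi-generators), whereas you derive (iii) from (ii) by passing to the dual pair $(A^{op},DQ)$ via \cite[Remark~4.9]{CrPs} and then applying $D$. Your dualization is correct and self-contained within the material stated in the paper; the side-of-action bookkeeping you flag (that $\smod{\Lambda^{op}}=\Lambda\lsmod$, that $DN_i\in\add Q_\Lambda$, and that $\Hom_\Lambda(N_i,DQ)\cong\Hom_\Lambda(Q,DN_i)$) is carried out properly, and minimality of $m$ is preserved by $D$. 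This is a reasonable choice given that Theorem~\ref{higherMT} as reproduced in the paper only states the quasi-generator side; it costs nothing and makes the proof independent of the unstated dual half of the cited theorem.
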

\begin{proof}
By assumption, $Q\ldom_A A\geq n\geq \injdim_AA$ and $A$ is a Gorenstein algebra. By the higher Morita-Tachikawa correspondence (see \citep[Theorem 3.5 ]{CruzMoritaTachikawa}), (ii) and (iii) are satisfied. By Theorem \ref{thm313}, (i) is satisfied. By Lemmas \ref{lemma5dot1} and  \ref{lemma5dot3}, (v) and (iv) are satisfied, respectively.
\end{proof}

Motivated by the above result we introduce the following concept. 

\begin{Def}
\label{defnmlpreculuster}
Let $\Lambda$ be a finite-dimensional algebra and let $Q\in \smod{\Lambda}$. We say that $Q$ is an \textbf{$(n, m, l)$-quasi-precluster tilting module} if $Q$ satisfies the Conditions (i)-(v) of Proposition \ref{Prop5dot4}.
\end{Def}

 We provide examples of quasi-precluster tilting modules in Section~\ref{schuralgebra}.

\begin{Prop}\label{Prop5dot6}
	Let $\Lambda$ be a finite-dimensional algebra and let $Q\in \smod{\Lambda}$. We write $A=\End_{\Lambda}(Q)$ and let $n$ be a natural number satisfying $n\geq \sup\{l, m+2\}$. Assume the following conditions:
	\begin{enumerate}[\normalfont(i)]
		\item $\Ext_{\Lambda}^i(Q, Q)=0$ for $1\leq i\leq n-2$.
		
		\item $Q$ is an $l$-quasi-generator of $\smod{\Lambda}$. 
		
		\item $Q$ is an $m$-quasi-cogenerator over $\smod{\Lambda}$.
		
		\item $\ddim_{ \, \add{Q}}\tau_{n-m-1}(Q)\leq m$.

\item $\codim_{ \, \add{Q}}\tau_i^{-}(Q)<\infty$ for some $1\leq i\leq n-1$.
\end{enumerate}
Then  $(A, Q)$ is a relative $n$-Auslander-Gorenstein pair.
\end{Prop}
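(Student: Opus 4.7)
The strategy is to verify, for $A := \End_\Lambda(Q)$, the three defining conditions of a relative $n$-Auslander-Gorenstein pair (Definition~\ref{defAusGorpair}): self-orthogonality ${}_A Q \in Q^\perp$, the $n$-Iwanaga-Gorenstein property on $A$, and $Q\ldom_A A \geq n$.

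I first apply Theorem~\ref{higherMT} twice. Condition~(ii), that $Q_\Lambda$ is an $l$-quasi-generator of $\smod\Lambda$, yields $\pdim_A Q = l$, the Ext vanishing $\Ext_A^{i>0}(Q,Q) = 0$ (hence ${}_A Q \in Q^\perp$), the bound $Q\ldom_A A \geq 2$, and---from the bijectivity of the correspondence---the double centralizer property $\End_A(Q)^{op} \cong \Lambda$. Dualising condition~(iii) via the standard duality $D$ rephrases it as: $DQ$ is an $m$-quasi-generator of $\Lambda^{op}\lsmod$; applying Theorem~\ref{higherMT} to $(\Lambda^{op}, DQ)$ (whose endomorphism algebra is $A^{op}$) then gives $\pdim_{A^{op}} DQ = m$, i.e.\ $\injdim_A Q = m$. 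Combining the double centralizer property with condition~(i), Theorem~\ref{thm313}($\beta$) upgrades the dominant dimension bound to $Q\ldom_A A \geq n$.

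The main computational step is to establish $\injdim_A A \leq n$. Apply Lemma~\ref{lemLiZhang} to $M = Q_\Lambda$ with parameter $n-m-1$; the required Ext vanishing comes from~(i) since $n-m-2 \leq n-2$. Since $Q\ldom_A A \geq n \geq n-m$, the first $n-m$ terms of the minimal projective resolution of $Q_\Lambda$ can be chosen of the form $P_i \cong \Hom_A(Q_i, Q)$ with $Q_i \in \add_A Q$, and Theorem~\ref{thm313}($\alpha$) yields $\Hom_\Lambda(P_i, Q) \cong Q_i$ in $A\lsmod$. The Li-Zhang sequence thus translates into the exact sequence of left $A$-modules
\[
0 \to A \to \tilde Q_0 \to \tilde Q_1 \to \cdots \to \tilde Q_{n-m-1} \to D\Hom_\Lambda(Q, \tau_{n-m-1} Q) \to 0,
\]
with $\tilde Q_i \in \add_A Q$, so in particular $\injdim_A \tilde Q_i \leq m$. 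Now condition~(iv) provides an exact sequence $0 \to Q'_m \to \cdots \to Q'_0 \to \tau_{n-m-1} Q \to 0$ in $\smod\Lambda$ that remains exact under $\Hom_\Lambda(Q, -)$. Applying $D\Hom_\Lambda(Q, -)$ and using that $\Hom_\Lambda(Q, Q'_j) \in \add A_A$ (a projective right $A$-module), we obtain an injective coresolution of length $m$ of $D\Hom_\Lambda(Q, \tau_{n-m-1} Q)$ by left $A$-injectives in $\add_A DA$. Splicing with the Li-Zhang sequence and applying the standard bound $\injdim_A A \leq \max_i(\injdim X_i + i)$ to the resulting long exact sequence gives $\injdim_A A \leq n$.

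For the opposite-side bound $\injdim A_A \leq n$, I would run a dual argument based on the injective-coresolution (equivalently, opposite-algebra) version of Lemma~\ref{lemLiZhang} together with condition~(v). This is the step I expect to be most delicate: condition~(v) asserts $\codim_{\add Q}(\tau^-_i Q) < \infty$ only for \emph{some} $1 \leq i \leq n-1$, rather than the specific index $n-l-1$ with explicit bound $l$ that a direct dualisation of the previous step would require (compare Lemma~\ref{lemma5dot1} and Proposition~\ref{Prop5dot4}(v)). Closing this gap requires leveraging~(v) together with the Ext vanishing~(i), the quasi-(co)generator structure from~(ii) and~(iii), and the bound on $\injdim_A A$ just established, in order to propagate finiteness to the right-module side and thereby confirm that $(A, Q)$ is a relative $n$-Auslander-Gorenstein pair.
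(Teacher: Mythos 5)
Your handling of the left-module side is essentially the paper's argument and is correct. You correctly extract $\pdim_A Q = l$, $\injdim_A Q = m$, the double centralizer property, and $Q\ldom_A A \geq n$ from Theorem~\ref{higherMT} and Theorem~\ref{thm313}, and the splice of the Li--Zhang sequence with the $\add Q$-resolution of $\tau_{n-m-1}Q$ from condition~(iv) gives $\injdim {}_A A \leq n$ just as in the paper.

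However, your treatment of $\injdim A_A$ is genuinely incomplete: you identify the difficulty (condition~(v) gives finiteness of $\codim_{\add Q}\tau_i^-(Q)$ only for some $i$, without the explicit index and bound a literal dualisation of the first half would want), but you do not close it --- the last paragraph is a to-do list rather than an argument. The missing idea is to lower your target: you do not need to prove $\injdim A_A \leq n$ directly, only $\injdim A_A < \infty$. Run Lemma~\ref{lemLiZhang} on a minimal projective resolution of $DQ$ with the index $i$ supplied by condition~(v); all the middle terms $\Hom_\Lambda(P_j', DQ)$ have $\injdim_A \leq l$ (finite), and condition~(v) says $\pdim_A \Hom_\Lambda(\tau_i^-(Q), Q)$ is finite, whence $\injdim_A D\Hom_\Lambda(\tau_i^-(Q), Q) \cong \injdim_A D\Hom_\Lambda(DQ, \tau_i(DQ))$ is finite, and so $A_A \cong \Hom_\Lambda(DQ, DQ)$ has finite injective dimension. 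Having both $\injdim {}_A A \leq n$ and $\injdim A_A < \infty$, the algebra $A$ is Iwanaga--Gorenstein, and then the standard fact (\cite[Lemma~6.9]{AuslanderReiten}, recalled in Section~\ref{Preliminaries}) that $\injdim A_A = \injdim {}_A A$ for such algebras upgrades the finiteness to $\injdim A_A \leq n$ for free. This asymmetry --- explicit bound on one side, mere finiteness on the other, reconciled by the Iwanaga--Gorenstein property --- is precisely why condition~(v) can afford to be weaker than a mirror image of condition~(iv).
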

\begin{proof}
By the higher Morita-Tachikawa correspondence, see Theorem~\ref{higherMT}, we obtain that $\pdim _AQ=l$ since $Q$ satisfies condition (ii). Similarly, condition (iii) implies that $\injdim _AQ=m$ and $Q\ldom_A A\geq 2$. Then using (i) and \cite[Theorem~3.1.4]{Cr2} we deduce that $Q\ldom_A A\geq n$.
		
		It remains to show that $\injdim{_AA}\leq n$ and $\injdim{A_A}\leq n$. 
		By (i) we have that $\Ext^i_{\Lambda}(Q, Q)=0$ for $i=1,\ldots, n-2$ and in particular for $i=1,\ldots, n-m-2$. Since $n-m-2\geq 0$, by Lemma~\ref{lemLiZhang} we have an exact sequence 
		\begin{equation}
			\label{firstexactseq}
			\xymatrix@C=0.4cm{
				0 \ar[r]^{} & \Hom_{\Lambda}(Q, {}_AQ) \ar[r] & \Hom_{\Lambda}(P_0, Q) \ar[r] & \cdots \ar[r]^{} & \Hom_{\Lambda}(P_{n-m-1}, Q) \ar[r]^{} & D\Hom_{\Lambda}(Q, \tau_{n-m-1} Q) \ar[r]^{} & 0.} 
		\end{equation}
		By (iv), there exists an exact sequence 
		\[
		\xymatrix{
			0 \ar[r]^{} & \Hom_{\Lambda}(Q, Q'_m) \ar[r] & \cdots \ar[r] & \Hom_{\Lambda}(Q, Q_0') \ar[r] & \Hom_{\Lambda}(Q, \tau_{n-m-1} Q) \ar[r]^{} & 0} 
		\]
		and therefore $\pdim_A \Hom_{\Lambda}(Q, \tau_{n-m-1} Q)\leq m$. This implies that $\injdim_A D\Hom_{\Lambda}(Q, \tau_{n-m-1} Q)\leq m$. 
		
		Observe that given a short exact sequence of $\Lambda$-modules
		\[
		0\to X_1\to X_2\to X_3\to 0
		\]
		with $\injdim X_2\leq m$ and $\injdim X_3\leq m+k$, then $\injdim X_1\leq m+k+1$ where $k\geq 0$. From the exact sequence $(\ref{firstexactseq})$, and since we also have that $\Hom_{\Lambda}(P_i, Q)$ lies in $\add_AQ$ and $\injdim_A \Hom_{\Lambda}(P_i, Q) \leq m$, we infer that $\injdim {}_A A\leq m+ n - m = n$ as wanted. 
		
		Consider now a minimal projective resolution $\cdots \to P_1'\to P_0'\to DQ\to 0$. Since $\Ext^i_{\Lambda}(DQ, DQ)=0$ for $1\leq i\leq n-2$, we get the following exact sequence:
		\begin{equation}
			\label{secondexactseqt}
			\xymatrix@C=0.30cm{
				0 \ar[r]^{} & \Hom_{\Lambda}(DQ, DQ) \ar[r] & \Hom_{\Lambda}(P_0', DQ) \ar[r] & \cdots \ar[r] & \Hom_{\Lambda}(P'_{i}, DQ) \ar[r] & D\Hom_{\Lambda}(DQ, \tau_{i}(DQ)) \ar[r]^{} & 0.}
		\end{equation}
	Since $\pdim_AQ=l$ it follows that $\injdim DQ_A=l$ and therefore $\injdim\Hom_{\Lambda}(P_i', DQ)_A\leq l$.
	Now, Condition (v) implies that $\pdim_A\Hom_{\Lambda}(\tau_i^{-}(Q), Q)$ is finite and so $\injdim D\Hom_{\Lambda}(\tau_i^{-}(Q), Q)_A<\infty$. We also have
	\begin{align*}
		D\tau^{-}_{i}(Q) &= D\tau^{-}\Omega_{\Lambda}^{-i}(Q)  \\ 
		&= D\Tr D\Omega_{\Lambda}^{-i}(Q)  \\ 
		&\cong \tau\Omega_{\Lambda}^{i}(DQ) \\ &=\tau_i(DQ) 
	\end{align*}and therefore we obtain that 
\begin{align*}
	D\Hom_{\Lambda}(DQ, \tau_i(DQ))\cong D\Hom_{\Lambda}(D\tau_i(DQ), DDQ)\cong D\Hom_{\Lambda}(\tau^-_i(Q), Q).
\end{align*}Hence, we have that $\injdim_A D\Hom_{\Lambda}(DQ, \tau_i(DQ))$ is finite and so $$A_A\cong \Hom_{\Lambda}({}_AQ, Q)\cong \Hom_{\Lambda}(DQ, (DQ)_A)$$ has finite injective dimension using (\ref{secondexactseqt}). This shows that $A$ is Iwanaga--Gorenstein, and  therefore {$\injdim A_A=\injdim {}_A A\leq n$} by \cite[Lemma~6.9]{AuslanderReiten}. We conclude that the pair $(A, Q)$ is a relative $n$-Auslander--Gorenstein pair.\end{proof}

We are now ready to prove the higher dimensional  Auslander-Iyama-Solberg correspondence as stated in the introduction.

\begin{Theorem}\label{maintheorem}
	Let $k$ be a field. For every triple of non-negative integers $n$, $m$ and $l$ with $n\geq m+l+2$, there is a one-to-one correspondence between:
	\begin{itemize}
		\item pairs $(\Lambda, Q)$, where $\Lambda$ is a finite-dimensional algebra and $Q$ is an $(n, m, l)$-precluster tilting right $\Lambda$-module;
		\item relative $n$-Auslander--Gorenstein pairs $(A, Q)$, where $Q$ is a self-orthogonal left $A$-module having projective dimension $l$ and injective dimension $m$.
	\end{itemize}
The bijection is given by $(\Lambda, Q_\Lambda)\mapsto (\End_{\Lambda}(Q), Q)$ and $(A, {}_AQ)\mapsto (\End_A(Q)^{op}, Q)$.
\end{Theorem}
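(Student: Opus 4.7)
The plan is to assemble the correspondence directly from the preparatory results already established in Section~\ref{section:quasipreclustertilting}: Proposition~\ref{Prop5dot4} furnishes the map in one direction, Proposition~\ref{Prop5dot6} furnishes the map in the reverse direction, and the two are shown to be mutually inverse via the double centralizer property encoded in Theorem~\ref{thm313}($\beta$) together with the higher Morita--Tachikawa correspondence (Theorem~\ref{higherMT}). So the proof becomes an exercise in carefully aligning axioms and checking the two inequalities that appear in the numerical hypothesis.

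For the direction $(A,{}_AQ)\mapsto (\End_A(Q)^{op},Q)$, I start from a relative $n$-Auslander--Gorenstein pair $(A,Q)$ with ${}_AQ\in Q^\perp$, $\pdim_A Q=l$ and $\injdim_A Q=m$, assuming $n\geq m+l+2$. Setting $\Lambda=\End_A(Q)^{op}$, Proposition~\ref{Prop5dot4} asserts exactly the five axioms of Definition~\ref{defnmlpreculuster} for the right $\Lambda$-module $Q$, so $(\Lambda,Q)$ lies in the first set; moreover $\Lambda$ is a finite-dimensional $k$-algebra.

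For the direction $(\Lambda,Q_\Lambda)\mapsto (\End_\Lambda(Q),Q)$, I apply Proposition~\ref{Prop5dot6} with $A:=\End_\Lambda(Q)$, and must verify that its hypotheses are met by an $(n,m,l)$-quasi-precluster tilting module. Conditions (i)--(iii) of Proposition~\ref{Prop5dot6} are identical to axioms (i)--(iii) of Definition~\ref{defnmlpreculuster}. Condition (iv), namely $\ddim_{\,\add Q}\tau_{n-m-1}(Q)\leq m$, is exactly the content of axiom (iv) (rephrased as an exact sequence that stays exact under $\Hom_\Lambda(Q,-)$). Condition (v) requires $\codim_{\,\add Q}\tau_i^-(Q)<\infty$ for some $1\leq i\leq n-1$: I take $i=n-l-1$, for which the bounds $1\leq i\leq n-1$ are ensured by $n\geq m+l+2\geq l+2$ (using $m\geq 0$) and by $l\geq 0$, and axiom (v) then provides the sharper bound $\codim_{\,\add Q}\tau_{n-l-1}^-(Q)\leq l$. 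The numerical constraint $n\geq \sup\{l,m+2\}$ of Proposition~\ref{Prop5dot6} is a consequence of $n\geq m+l+2$. Hence Proposition~\ref{Prop5dot6} delivers that $(A,Q)$ is a relative $n$-Auslander--Gorenstein pair, and from the higher Morita--Tachikawa correspondence (Theorem~\ref{higherMT}) applied to the $l$-quasi-generator $Q_\Lambda$, together with its dual applied to the $m$-quasi-cogenerator, I read off $\pdim_A Q=l$, $\injdim_A Q=m$, and the self-orthogonality ${}_AQ\in Q^\perp$. So $(A,Q)$ is in the second set.

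To finish, I check the two assignments compose to the identity. Starting from $(A,Q)$, the composite yields $(\End_\Lambda(Q),Q)$, and since $Q\ldom_A A\geq n\geq 2$, Theorem~\ref{thm313}($\beta$) gives the double centralizer isomorphism $A\cong \End_\Lambda(Q)$. Starting from $(\Lambda,Q)$, the composite yields $(\End_A(Q)^{op},Q)$, and the higher Morita--Tachikawa correspondence applied to the $l$-quasi-generator $Q_\Lambda$ (noting that the proof of Proposition~\ref{Prop5dot6} secures $Q\ldom_A A\geq 2$) produces $\Lambda\cong \End_A(Q)^{op}$. I do not foresee any substantive obstacle beyond the bookkeeping: the main care is required in matching the index shifts $\tau_{n-m-1}$ and $\tau^-_{n-l-1}$ between Definition~\ref{defnmlpreculuster} and Proposition~\ref{Prop5dot6}, and in confirming that the single inequality $n\geq m+l+2$ supplies all the separate numerical bounds ($n\geq m+2$, $n\geq l+2$, $n\geq \sup\{l,m+2\}$) needed to make the $\tau$-constructions and the two preparatory results applicable.
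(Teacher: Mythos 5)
Your proposal is correct and follows exactly the paper's argument: Proposition~\ref{Prop5dot4} gives one direction, Proposition~\ref{Prop5dot6} with $i=n-l-1$ gives the other, and the mutual inverseness is read off from the higher Morita--Tachikawa correspondence (Theorem~\ref{higherMT}) together with the double centralizer property. Your extra bookkeeping on the index shifts and the numerical bounds is just an expanded version of what the paper leaves implicit.
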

\begin{proof}
	It follows from Propositions \ref{Prop5dot4} and \ref{Prop5dot6} (setting $i=n-l-1$ in (v) of Proposition \ref{Prop5dot6}). The fact that these assignments form a one-to-one correspondence follows from these assignments being specialisations of the higher Morita-Tachikawa correspondence (see \citep{CruzMoritaTachikawa}).
\end{proof}

\begin{Remark}Let $n$ be a natural number greater than one. Let $Q\in A\lsmod$ with the property that $Q\ldom_A A$ is greater than $n-1$. 
	From the exact sequence (\ref{secondexactseqt}) and the higher Morita-Tachikawa correspondence it is clear that $\injdim A_A$ is finite if and only if $\Hom_{\Lambda}(DQ, \tau_{i}(DQ))$ has finite projective dimension over $A$, where $\Lambda$ denotes the endomorphism algebra $\End_A(Q)^{op}$. However, the beginning of a projective resolution of $\Hom_{\Lambda}(DQ, \tau_{i}(DQ))$ is only induced by an $\add_A DQ$-resolution of $\tau_i(DQ)$ in case $DQ\lcodom_\Lambda \tau_{i}(DQ)$ is greater than or equal to 2 for some $i=1, \ldots, n-1$. However, such a condition is only immediate when $DQ$ is a generator, or when $Q$ is a cogenerator of $\smod{\Lambda}$.
\end{Remark}

\section{Examples}
\label{schuralgebra}

 In this section, we illustrate Theorem~\ref{maintheorem} with explicit examples. We start with an important class of examples from the modular representation theory of general linear groups.

\begin{Example}\label{TLexemplo}
Let $K$ be a field of characteristic two and let $S_{K, q}(n, d)$ be the $q$-Schur algebra corresponding to the Dipper-Donkin quantum group $q$-$\operatorname{Gl}_n$ for some $0\neq q\in K$ with degree $d$ (see for example \cite{Do2}.)
    
    Temperley--Lieb algebras either have a quasi-hereditary structure or have infinite global dimension (we refer to \cite{CE} for details). Those that have infinite global dimension are of the form $TL_{K, d}(0)$ and these are exactly those that arise as $\End_{S_{K, q}(2, d)}((K^2)^{\otimes d})$ with $d$ an even natural number and $K$ having quantum characteristic two (see for example \citep[Corollary 6.8]{CE}). 
    
    Assume that $q^2=1$.
    By \citep[Theorem 3.10]{P}, the global dimension of $S_{K, q}(2, 2m)$ is exactly $2m$. (Observe that the original result assumes that $K$ is an algebraically closed field, but this also holds over any field because the algebraic closure of a field $k$ is faithfully flat as a module over $k$, faithful flat extensions commute with Ext-groups and it is compatible with the quasi-hereditary structure of $S_{k, q}(2, 2m)$).
    
    By \citep[Theorem 5.8]{CE}, it follows that $(S_{{K, q}(2, 2m)}((K^2)^{\otimes 2m}), (K^2)^{\otimes 2m})$ is a relative $2m$-Auslander pair for every $m\in \mathbb{N}$.
    
    So, Theorem \ref{maintheorem} restricts to a correspondence with $q$-Schur algebras together with the tensor power as a summand of the characteristic tilting module on one side and Temperley--Lieb algebras on the second side. In particular, this shows that all Temperley--Lieb algebras of infinite global dimension admit a quasi-precluster tilting module.
\end{Example}

We illustrate Proposition \ref{Prop5dot4}, using the Schur algebra $S(2, 4)$ over an algebraically closed field $k$ with characteristic two. Let $A$ be the basic algebra isomorphic to $S(2, 4).$ Hence, $A$ is the algebra $kQ/I$, where $Q$ is the quiver
\[
\xymatrix{
 3 \ar@/_0.5pc/[r]_{\alpha} & 5 \ar@/_0.5pc/[l]_{\alpha_1} \ar@/^0.5pc/[r]^{\beta} &  4 \ar@/^0.5pc/[l]^{\beta_1}
}
\]
and $I$ is the ideal generated by the relations
\[
\alpha_1\alpha = 0 = \beta\beta_1 \ \ \text{and} \ \ \beta_1\beta\alpha = 0 = \alpha_1\beta_1\beta,
\] (see for example \citep[5.6]{E2} or \cite[Example 5.10]{CE}.)
The indecomposable projective modules are
\[
P(3):=\begin{tikzcd}[every arrow/.append style={dash}, column sep={0.4em},
		row sep={0.5em}]
 3 \arrow[d] \\ 5 \arrow[d] \\4
	\end{tikzcd}
\qquad
P(5):=\begin{tikzcd}[every arrow/.append style={dash}, column sep={0.4em},
		row sep={0.5em}]
 & 5 \arrow[ld] \arrow[rd] &\\ 3 \arrow[d]& & 4 \arrow[d]\\ 5 \arrow[d] & & 5 \\ 4 & &
	\end{tikzcd}
\qquad
\text{and}
\qquad
P(4):=\begin{tikzcd}[every arrow/.append style={dash}, column sep={0.4em},
		row sep={0.5em}]
 4 \arrow[d]\\ 5 \arrow[d] \\ 3 \arrow[d] \\ 5 \arrow[d] \\ 4
	\end{tikzcd}.
\]
The basic version of $(k^2)^{\otimes 4}$ is the left $A$-module
\[
Q\:= P(4)\oplus 
\begin{tikzcd}[every arrow/.append style={dash}, column sep={0.4em},
		row sep={0.5em}]
 5 \arrow[d] \\ 4 \arrow[d] \\ 5
	\end{tikzcd}.
\]
and notice that $\pdim_AQ = 1 = \injdim_AQ$.  By \cite{CE}, we have that
\[
Q\ldom S(2,4) = 4 = \gldim S(2,4)
\]
and therefore we are in the case of $l=m=1$ and $n=4$. $A$ has a quasi-hereditary structure and $T=3\oplus Q$ is the characteristic tilting module of $A$ and, in particular, it is the unique 2-tilting 2-cotilting module of $A$ that completes $Q$ in the sense of \citep[Theorem 6.6]{CrPs}.

Let $\Lambda$ be the endomorphism algebra of $Q$. So the quiver of $\Lambda$ is given by
\begin{equation}
\xymatrix{
 1 \ar@/^0.5pc/[r]^{\alpha} & 2 \ar@/^0.5pc/[l]^{\beta} \ar@(ur,dr)[]^{t}
} \label{ringeldualstwofour}
\end{equation}
bound by the relations
\[
\alpha\beta = \beta t = t\alpha = t^2 = 0.
\]
The indecomposable projective modules are
\[
P(1)=\begin{tikzcd}[every arrow/.append style={dash}, column sep={0.4em},
		row sep={0.5em}]
 1 \arrow[d] \\ 2 \arrow[d] \\ 1
	\end{tikzcd}.
\qquad
\text{and}
\qquad
P(2):=\begin{tikzcd}[every arrow/.append style={dash}, column sep={0.4em},
		row sep={0.5em}]
 & 2 \arrow[dl] \arrow[dr]& \\ 1 & & 2
	\end{tikzcd}. 
\]Denote by $f_1$ and $f_2$ the primitive idempotents associated with the vertices $1$ and $2$, respectively.
We want to compute 
\[
DQ \cong \Hom_{A}(Q, DA) \cong \Hom_{R}\big(\Hom_A(T, Q), \Hom_A(T, DA) \big)
\] as a left $\Lambda$-module. The Ringel dual of $A$, $R=\End_A(T)^{op}$, is then the bound quiver algebra of the quiver
\[
\xymatrix{
 1 \ar@/^0.5pc/[r]^{\alpha} & 2 \ar@/^0.5pc/[l]^{\beta} \ar@/^0.5pc/[r]^{\gamma} &  3 \ar@/^0.5pc/[l]^{\theta}
}
\]
with relations
\[
\gamma \alpha = \beta\theta = \alpha\beta = \gamma\theta = 0.
\]
In this notation, the module $\Hom_A(T, Q)$ is the projective module $R(e_1+e_2)$, where $e_i$ is the primitive idempotent associated with the vertex $i$ for $i\in\{1, 2, 3\}$. Moreover, we have that $\Hom_A(T, DA)$ is the tilting module $T_R(1)\oplus T_R(2)\oplus T_R(3)$, where
\[
T_R(1) = 1,
\qquad
T_R(2)=\begin{tikzcd}[every arrow/.append style={dash}, column sep={0.4em},
		row sep={0.5em}]
 1 \arrow[d] \\ 2 \arrow[d] \\ 1
	\end{tikzcd}
\qquad
\text{and}
\qquad
T_R(3) = \begin{tikzcd}[every arrow/.append style={dash}, column sep={0.4em},
		row sep={0.5em}]
 & 2 \arrow[dl] \arrow[d]& \\ 1 & 3 \arrow[d] & 1 \arrow[dl]\\ & 2 &
	\end{tikzcd}.
\]
Since $\Lambda\cong \Lambda^{\op}$ we have
\[
Q \cong DDQ \cong (e_1+e_2)\Hom_A(T, DA)\cong   1\oplus \begin{tikzcd}[every arrow/.append style={dash}, column sep={0.4em},
		row sep={0.5em}]
 1 \arrow[d] \\ 2 \arrow[d] \\ 1
	\end{tikzcd} \oplus \begin{tikzcd}[every arrow/.append style={dash}, column sep={0.4em},
		row sep={0.5em}]
 & 2 \arrow[d] \arrow[dl]&  1 \arrow[dl] \\ 1 & 2 & 
	\end{tikzcd}=:Q_1\oplus Q_2\oplus Q_3.
\]
We claim that $\add{Q}_\Lambda$ satisfies conditions (i)-(v) of Proposition~\ref{Prop5dot4}. 

Condition (i) is clear from the relative dominant dimension $Q\ldom A$ together with 
Theorem~\ref{thm313}. Condition (ii) is valid since we can consider the exact sequence

\[ 0 \rightarrow
\begin{tikzcd}[every arrow/.append style={dash}, column sep={0.4em},
		row sep={0.5em}]
 1 \arrow[d] \\ 2 \arrow[d] \\ 1
	\end{tikzcd} \oplus  \begin{tikzcd}[every arrow/.append style={dash}, column sep={0.4em},
		row sep={0.5em}]
 & 2 \arrow[dl] \arrow[dr] & \\ 1 & & 2
	\end{tikzcd} \to \begin{tikzcd}[every arrow/.append style={dash}, column sep={0.4em},
		row sep={0.5em}]
 1 \arrow[d] \\ 2 \arrow[d] \\ 1
	\end{tikzcd} \oplus \begin{tikzcd}[every arrow/.append style={dash}, column sep={0.4em},
		row sep={0.5em}]
 & 2 \arrow[d] \arrow[dl]&  1 \arrow[dl] \\ 1 & 2 & 
	\end{tikzcd} \to 1 \to 0.
\]
Indeed, this exact sequence remains exact under $\Hom_{\Lambda}(-, Q)$ since $Q_2$ is injective and the functor ${\Hom_{\Lambda}(-, Q_1\oplus Q_3)}$ induces the exact sequence
\[
0\rightarrow \Hom_\Lambda(Q_1, Q_1\oplus Q_3)\rightarrow \Hom_\Lambda(Q_3, Q_1\oplus Q_3)\rightarrow \Hom_\Lambda(P(2), Q_1\oplus Q_3)\rightarrow \Ext_\Lambda^1(Q_1, Q_1\oplus Q_3)\rightarrow 0
\]
and the vector space dimension of $\Ext_A^1(Q_1, Q_1\oplus Q_3)$ is therefore equal to 2-4+2=0. 
This means that $Q$ is an $1$-quasi-generator of $\smod{\Lambda}$.

Since $\Lambda$ has a simple preserving duality and $Q$ is self-dual under this duality, therefore Condition (iii) is verified with the exact sequence 
\[
0 \rightarrow Q_1\rightarrow Q_3\oplus Q_2\rightarrow D\Lambda\rightarrow 0.
\]

For Condition (iv) we have:
\begin{eqnarray}
\tau_2(Q) &=& \tau_{4-1-1}(Q) \nonumber \\
          &=& \tau \Omega(Q) \nonumber \\
          &=& \tau \Omega\big(Q_1\oplus Q_2\oplus Q_3\big)  \nonumber \\
&=& \tau \Omega(1) \oplus \tau\Omega P(1)\oplus \tau\Omega Q_3  \nonumber \\
&=& \tau\left(\begin{tikzcd}[every arrow/.append style={dash}, column sep={0.4em},
		row sep={0.5em}]
  2 \arrow[d] \\ 1
	\end{tikzcd}\right) \oplus \tau\left(\begin{tikzcd}[every arrow/.append style={dash}, column sep={0.4em},
		row sep={0.5em}]
  2 \arrow[d] \\ 1
	\end{tikzcd}\right). \nonumber
\end{eqnarray}

Fix $M:=\begin{tikzcd}[every arrow/.append style={dash}, column sep={0.4em},
		row sep={0.5em}]
  2 \arrow[d] \\ 1
	\end{tikzcd}.$ We claim that $\tau M= \begin{tikzcd}[every arrow/.append style={dash}, column sep={0.4em},
		row sep={0.5em}]
  1 \arrow[d] \\ 2
	\end{tikzcd}.$

We have the exact sequence

\[
\begin{tikzcd}
    f_2\Lambda \arrow[rr] \arrow[rd, twoheadrightarrow] & & f_2\Lambda \arrow[r] & M \arrow[r] & 0\\
   & 2 \arrow[ur, hookrightarrow]& 
\end{tikzcd}
\]

and applying the functor $\Hom_{\Lambda}(-, \Lambda)$ we obtain that

\[
0\rightarrow \Hom_\Lambda\left(  \begin{tikzcd}[every arrow/.append style={dash}, column sep={0.4em},
		row sep={0.5em}]
2 \arrow[d] \\1
	\end{tikzcd} , P(1)\oplus P(2)\right) \rightarrow \Hom_\Lambda(f_2\Lambda, P(1)\oplus P(2))\rightarrow \Hom_\Lambda(f_2\Lambda, P(1)\oplus P(2))
\]

which turns out to give the next exact sequence of left $\Lambda$-modules
\[
0\rightarrow 1\oplus 2 \rightarrow \Lambda f_2 \rightarrow \Lambda f_2 \rightarrow \begin{tikzcd}[every arrow/.append style={dash}, column sep={0.4em},
		row sep={0.5em}]
2 \arrow[d] \\1
	\end{tikzcd} \rightarrow 0.
\]

We infer that $\tau\left(\begin{tikzcd}[every arrow/.append style={dash}, column sep={0.4em},
		row sep={0.5em}]
2 \arrow[d] \\1
	\end{tikzcd} \right)= \begin{tikzcd}[every arrow/.append style={dash}, column sep={0.4em},
		row sep={0.5em}]
1 \arrow[d] \\2
	\end{tikzcd} $ as a right $\Lambda$-module.

The exact sequence giving condition (iv) is therefore 
\[
0\to Q_1\oplus Q_1 \to Q_2\oplus Q_2\rightarrow \tau_2 Q\cong \tau M\oplus \tau M\rightarrow 0,
\] since $\Ext^1_\Lambda(Q, Q_1)=0$.
We compute
\[
\tau_2^{-}(Q)  = \tau^{-}\Omega^{-1}\big(Q_1\oplus Q_2\oplus Q_3  \big) = \tau^{-}\left(\begin{tikzcd}[every arrow/.append style={dash}, column sep={0.4em},
		row sep={0.5em}]
1 \arrow[d] \\2
	\end{tikzcd}\oplus \begin{tikzcd}[every arrow/.append style={dash}, column sep={0.4em},
		row sep={0.5em}]
1 \arrow[d] \\2
	\end{tikzcd}\right) = \begin{tikzcd}[every arrow/.append style={dash}, column sep={0.4em},
		row sep={0.5em}]
2 \arrow[d] \\1
	\end{tikzcd} \oplus \begin{tikzcd}[every arrow/.append style={dash}, column sep={0.4em},
		row sep={0.5em}]
2 \arrow[d] \\1
	\end{tikzcd}. 
\]
So the exact sequence giving condition (v) is
\[
0\to \begin{tikzcd}[every arrow/.append style={dash}, column sep={0.4em},
		row sep={0.5em}]
2 \arrow[d] \\1
	\end{tikzcd}\oplus \begin{tikzcd}[every arrow/.append style={dash}, column sep={0.4em},
		row sep={0.5em}]
2 \arrow[d] \\1
	\end{tikzcd} \to P(1)\oplus P(1) \to 1\to 0.
\]
We claim that $\tau_3(Q)$ lies in $\add{Q}$. Indeed, observe that
\[
\tau_3(Q) = \tau\Omega^2(Q) = \tau \Omega \left(\begin{tikzcd}[every arrow/.append style={dash}, column sep={0.4em},
		row sep={0.5em}]
2 \arrow[d] \\1
	\end{tikzcd} \oplus \begin{tikzcd}[every arrow/.append style={dash}, column sep={0.4em},
		row sep={0.5em}]
2 \arrow[d] \\1
	\end{tikzcd} \right) = \tau(2\oplus 2)= Q_3\oplus Q_3
\]
which belongs to $\add{Q}$. So, this means that the dual of Condition (v) of Proposition \ref{Prop5dot6} also holds in this case. However, this is the best one can hope for since we now claim that $\tau_4(Q)$ cannot fit in any finite $\add{Q}$ resolution. In fact, 
\[
\tau_4(Q) = \tau\Omega^3(Q) = \tau\Omega (2\oplus 2) = \tau(1\oplus 2 \oplus 1 \oplus 2) = \tau(1)^2\oplus \tau(2)^2=(P(2)/1)^2\oplus Q_3^2,
\]but $P(2)/1$ cannot be a submodule of a module in $\add Q_1\oplus Q_2\oplus Q_3.$

\begin{Example}
    The condition $n\geq m+l+2$ in Theorem \ref{maintheorem} seems to be of importance even when $m=l=0$. 

    Indeed, let $A$ be the bound quiver algebra
%    \[
%\begin{tikzcd}
%1 \arrow[out=90,in=180,loop, "\alpha", swap] \arrow[r, "\gamma"]& 2  \arrow[out=0,in=90, loop, "\beta", swap]
%\end{tikzcd}
%\] 
\[
\xymatrix{
 1 \ar@(dl,ul)[]^{\alpha} \ar[r]^{\gamma}  & 2 \ar@(ur,dr)[]^{\beta}
} 
\] 
with relations $\alpha^2=\beta^2=\gamma\beta-\beta\gamma=0$. Then, $A$ is an Iwanaga--Gorenstein algebra with $\injdim A\leq 1.$ Take $Q$ to be the projective cover of the simple $1$. So, $Q$ is projective-injective and $Q\ldom_A A=\domdim A=1$. This means that $(A, Q)$ is a relative $1$-Auslander--Gorenstein pair. However, the correspondence does not work for this case.

Under the assignment of Theorem \ref{maintheorem}, the pair $(A, Q)$ is sent to $(\End_A(Q)^{op}, Q)$ and $\End_A(Q)^{op}$ corresponds to the algebra $B=k[x]/(x^2).$ As right $B$-module $Q$ is isomorphic to $k[x]/(x^2)\oplus k[x]/(x^2).$ Hence, it is a generator, cogenerator, and also a projective-injective module over $B$. In particular $\tau_nQ=\tau_n^-Q=0$ for every $n\geq 1$.  So, $Q$ satisfies all conditions to be a quasi-precluster tilting module, but the problem is that there is no way of recovering $A$ from $k[x]/(x^2).$ Indeed, $\End_B(Q)$ is Morita equivalent to $k[x]/(x^2)$, and hence it is different from $A$.
\end{Example}

In the previous example, the correspondence did not hold because the self-orthogonal module did not possess a double centralizer property. In the following, we see an example that shows that even if we have a double centralizer property, the correspondence might fail if we drop the assumption $n\geq m+l+2$.
\begin{Example}
Consider again the algebra $R$ defined in (\ref{ringeldualstwofour}) which is the Ringel dual of the basic algebra of $S(2, 4)$ and the notation associated with that example. The indecomposable projective modules are 
\[
P_R(1) = \begin{tikzcd}[every arrow/.append style={dash}, column sep={0.4em},
		row sep={0.5em}]
 1 \arrow[d] \\ 2 \arrow[d] \\ 1
	\end{tikzcd},
\qquad
P_R(2)=\begin{tikzcd}[every arrow/.append style={dash}, column sep={0.4em},
		row sep={0.5em}]
 & 2 \arrow[dl] \arrow[d]& \\ 1 & 3 \arrow[d] & \\ & 2 &
	\end{tikzcd}
\qquad
\text{and}
\qquad
P_R(3) = \begin{tikzcd}[every arrow/.append style={dash}, column sep={0.4em},
		row sep={0.5em}]
3 \arrow[d] \\ 2
	\end{tikzcd}.
\]
So, we can see that the characteristic tilting module of $R$, $T^R:=T_R(1)\oplus T_R(2)\oplus T_R(3)$ has projective (resp. injective) dimension exactly two. In particular, $\pdim_{R^{op}} DT^R=\injdim_{R^{op}} DT^R=2,$ and of course, $DT_R\ldom R^{op}=+\infty$. Thanks to $R$ being a quasi-hereditary algebra with a simple preserving duality, it follows that $R^{op}$ has global dimension $4$. So, $(R^{op}, DT^R)$ is a relative $4$-Auslander pair. Hence, $m+l+2=6>n$.
Under the assignment of Theorem \ref{maintheorem}, the pair $(R^{op}, DT^R)$ is sent to $(A^{op}, T\simeq \Hom_A(A, T))$. Hence, $\begin{tikzcd}[every arrow/.append style={dash}, column sep={0.4em},
		row sep={0.5em}]
 5 \arrow[d] \\ 4 \arrow[d] \\ 5
	\end{tikzcd}$ is a direct summand of $T$ as a right $A^{op}$-module and the transpose of this direct summand is the cokernel of the non-zero map $Ae_5\rightarrow Ae_3$, that is, it is the simple $3$, where $e_i$ corresponds to the primitive idempotent associated with the vertex $i$. So $\tau T=\tau_{4-2-1} T$ contains the simple $3$ as direct summand which is not in the top of $T$. This shows that (iv) of Proposition \ref{Prop5dot4} does not hold for $T$ as right $A^{op}$-module. This example also shows that tilting modules are not necessarily quasi-precluster tilting modules in the sense of Definition \ref{defnmlpreculuster}.
\end{Example}

As we see below, it is not reasonable to impose the condition $n\geq m+l+2$ on tilting modules, and so the aim of the next proposition is to clarify that tilting modules and quasi-precluster tilting modules are two different entities.

\begin{Prop}
    Let $\Lambda$ be a finite-dimensional algebra over a field and let $T$ be a right tilting $\Lambda$-module so that $A:=\End_\Lambda(T)$ is Iwanaga-Gorenstein. Then $(A, T)$ is a relative $n$-Auslander--Gorenstein pair with $n\leq \pdim_A T+\injdim_A T=\pdim T_\Lambda+\injdim T_\Lambda$.
\end{Prop}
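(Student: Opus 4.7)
The plan is to combine classical (Miyashita-style) tilting theory with the relative Mueller theorem (Theorem~\ref{thm313}) and a standard dimension-shifting argument along a tilting coresolution. First, I invoke classical tilting symmetry: since $T_\Lambda$ is a tilting $\Lambda$-module with $A=\End_\Lambda(T)$, the left $A$-module ${}_AT$ is again a tilting module, $\End_A(T)^{\op}\cong\Lambda$, and in particular $\Ext^i_A(T,T)=0$ for all $i\geq 1$, so ${}_AT\in T^\perp$ in $A\lsmod$. The Brenner--Butler--Miyashita symmetry also supplies the identities $\pdim_A T=\pdim T_\Lambda$ and $\injdim_A T=\injdim T_\Lambda$, from which the displayed equality $\pdim_A T+\injdim_A T=\pdim T_\Lambda+\injdim T_\Lambda$ is immediate.

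Next, I apply Theorem~\ref{thm313}($\beta$) with $Q=T$. The tilting Ext-vanishing $\Ext^i_\Lambda(T,T)=0$ for all $i\geq 1$, combined with the tautological identification $A\cong\End_\Lambda(T)$, forces $T\ldom_A A\geq n$ for every $n\geq 0$, hence $T\ldom_A A=+\infty$. Because $A$ is Iwanaga--Gorenstein, $\injdim_A A=\injdim A_A$ is finite; set $n:=\injdim_A A$. Then $\injdim_A A\leq n\leq T\ldom_A A$, and together with ${}_AT\in T^\perp$ this verifies the conditions of Definition~\ref{defAusGorpair}, so $(A,T)$ is indeed a relative $n$-Auslander--Gorenstein pair.

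Finally, I bound $n$ using a tilting coresolution of ${}_AA$. Since ${}_AT$ is a tilting left $A$-module of projective dimension $p:=\pdim_A T$, there exists an exact sequence
\[
0\to {}_AA\to T^0\to T^1\to\cdots\to T^p\to 0
\]
with each $T^i\in\add {}_AT$, hence $\injdim_A T^i\leq\injdim_A T$. Standard dimension shifting along this coresolution, performed exactly as in the proof of Proposition~\ref{Prop5dot6}, yields $\injdim_A A\leq p+\injdim_A T=\pdim_A T+\injdim_A T$, which provides the required bound $n\leq\pdim_A T+\injdim_A T$. The main subtlety lies in correctly invoking the Miyashita-type identifications of projective and injective dimensions of $T$ on both sides of the bimodule structure; once these are in hand, everything else is a direct application of Theorem~\ref{thm313} and the dimension-shifting trick already exploited in Section~\ref{section:quasipreclustertilting}.
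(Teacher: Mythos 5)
Your argument is correct and follows essentially the same skeleton as the paper's proof: establish the bimodule tilting structure and dimension equalities, then bound $\injdim_A A$ by dimension-shifting along a tilting coresolution. The differences are in which results you invoke at each step: where you reach for classical Brenner--Butler--Miyashita tilting symmetry, the paper uses the Higher Morita--Tachikawa correspondence (which, in this relative framework, subsumes the classical statements and also yields $T\ldom_A A=\infty$ without a separate appeal to Theorem~\ref{thm313}); you instead verify the infinite relative dominant dimension explicitly via Theorem~\ref{thm313}($\beta$), which is a perfectly valid alternative and arguably makes the logic more transparent to a reader who knows classical tilting theory but not the higher Morita--Tachikawa machinery.

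One presentational imprecision worth flagging: you attribute the equality $\injdim_A T=\injdim T_\Lambda$ directly to "Brenner--Butler--Miyashita symmetry," but for a tilting bimodule this symmetry only gives the projective dimension identity $\pdim_A T=\pdim T_\Lambda$; the injective dimension identity is a statement about cotilting modules, and it is available here only because $A$ being Iwanaga--Gorenstein forces the tilting module $T$ to also be a cotilting module on both sides (this is the step the paper makes explicit, and which in turn shows $\Lambda$ is Iwanaga--Gorenstein). Since you do have the Gorenstein hypothesis this is easily repaired, but as written the justification reads as if the injective dimension equality were a consequence of tilting alone, which it is not.
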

\begin{proof}
    By the Higher Morita-Tachikawa correspondence,  $T$ being a tilting $\Lambda$-module, implies that $T$ is also a tilting over $A$ (see also for example \cite{zbMATH03925033}).
    Now, $A$ being Iwanaga--Gorenstein implies that $T$ is a tilting and a cotilting module, and therefore also a cotilting module over $\Lambda$. In particular, $\Lambda$ must be Iwanaga--Gorenstein as well (see  \citep[Lemma 1.3]{zbMATH00957373}).
    It is well-known that $\pdim_A T=\pdim T_\Lambda$. For convenience, we provide a short argument. 
    Since $T$ is a tilting $A$-module, it is an $\pdim_A T$-quasi-generator of $\smod{\Lambda}$ by the Higher Morita--Tachikawa correspondence. Hence, $\Ext_\Lambda^{\pdim_A T}(T, A)\neq 0$. Thus, $\pdim_A T\leq \pdim T_\Lambda$. Symmetrically, it follows that $\pdim T_\Lambda\leq \pdim_A T.$ Analogously, $\injdim_A T=\injdim T_\Lambda.$

It remains to show that $\injdim_A A\leq \pdim T_\Lambda+\injdim T_\Lambda.$ 

But this follows immediately from the fact that $T$ is an $\pdim T_\Lambda$-quasi-generator of $A\lsmod$ thanks to the Higher Morita-Tachikawa correspondence, and so $\injdim_A A\leq \pdim T_\Lambda+ \injdim_A T=\pdim T_\Lambda+\injdim T_\Lambda$.
\end{proof}

\section{Cohen--Macaulay modules of Auslander--Gorenstein pairs}
\label{section: CM}

Our aim in this section is to describe the category of Gorenstein projective modules of $A$ for a relative $n$-Auslander--Gorenstein pair $(A, Q)$. Further, we want to characterise relative $n$-Auslander pairs among the $n$-Auslander--Gorenstein pairs using properties of $Q$ as $\End_A(Q)^{op}$-module. To this end, we will follow closely the theory of relative homological algebra using Auslander and Solberg's perspective from \cite{zbMATH00423523, zbMATH00423522}. These techniques have been also used in \cite{IS} and \cite{LiZhang}.

\subsection{Recap on \texorpdfstring{$F$}--exact sequences }

Given $M\in \smod{\Lambda}$, we can consider a new exact structure in $\smod{\Lambda}$ that makes $M$ a projective object and another that makes $M$ an injective object. Indeed, we say that an exact sequence $\xymatrix{
	0 \ar[r]^{} & X \ar[r]^{} & Y \ar[r]^{} & Z \ar[r]^{} &  0}$ is an \textbf{$F_M$-exact} sequence if it is exact under $\Hom_{\Lambda}(M, -)$, and it is said to be \textbf{$F^M$-exact} if it is exact under $\Hom_{\Lambda}(-, M)$. The module category $\smod{\Lambda}$ together with all $F_M$-exact sequences forms an exact category with enough projectives and enough injectives (see for example \citep[Proposition 2.2]{IS} and \citep[Proposition 1.10]{zbMATH00423522}) whose projective objects are the modules in $\add \Lambda\oplus M$. The injective objects of the above exact category are the modules in $\add D\Lambda \oplus \tau M$. It is well-known that an exact sequence is $F_M$-exact if and only if it is $F^{\tau M}$-exact (see \citep[Proposition 1.7]{zbMATH00423522}).
So, it is enough to present the notation to $F_M$-exact sequences. 

Given $X\in \smod{\Lambda}$, an \textbf{$F_M$-projective resolution} of $X$ is any $F_M$-exact sequence $$ \xymatrix{\cdots \ar[r]^{} & P_1 \ar[r]^{} & P_0 \ar[r]^{} &X \ar[r]^{} & 0 }$$ whose terms  $P_i$ belong to $\add \Lambda\oplus M$, while an \textbf{$F_M$-injective resolution} is any $F_M$-exact sequence $ \xymatrix{ 0\ar[r]^{} & X\ar[r]^{} & I_0 \ar[r]^{} &I_1 \ar[r]^{} & \cdots }$
whose terms $I_i$ belong to $\add D\Lambda\oplus \tau M$.

We denote by $\Ext_{F_M}^i(X, Y)$ the $i$-th \textbf{$F_M$-relative extension group}. Hence, $\Ext_{F_M}^i(X, Y)$ is exactly the $i$-th cohomology $H^i(\Hom_{\Lambda}(X^\bullet, Y))$, where $X^\bullet$ is a deleted $F_M$-projective resolution of $X$.
As in the classical case, $\Ext_{F_M}^i(X, Y)$ can be determined using deleted $F_M$-injective resolutions of $Y$.

Using $\Ext_{F_M}^i(-, -)$, we can analogously to the classical case, define the subcategories $ {}^{\perp_{F_M}} X$ and $X^{\perp_{F_M}}$ as well as the concepts of $F_M$-projective dimension of $X$ and $F_M$-injective dimension of $X$ which we denote by $\pdim_{F_M} X$ and $\injdim_{F_M} X$, respectively. 

\begin{Def}
	Let $M\in \smod{\Lambda}$. A module $T$ in $\smod{\Lambda}$ is an {\bf $F_M$-cotilting object} if the following conditions are satisfied: 
	\begin{enumerate}[\normalfont(i)]
		\item $\injdim_{F_M} T<+\infty$;
		\item $\Ext_{F_M}^{i>0}(T, T)=0$;
		\item there exists an $F_M$-exact sequence 
		$$\xymatrix{0 \ar[r]^{} & T_n\ar[r]^{} & \cdots \ar[r]^{} & T_1 \ar[r]^{} & T_0 \ar[r]^{} & D\Lambda \oplus \tau M\ar[r]^{} & 0 },$$ with $T_i\in \add T$ for some non-negative integer $n$.
	\end{enumerate}
\end{Def}

\subsection{Relative cotilting objects induced by Auslander--Gorenstein pairs} We are now ready to show that $Q$ is a relative cotilting object in the exact structure making a certain syzygy of $Q$ a projective object over $\End_A(Q)^{op}$ for a relative Auslander--Gorenstein pair $(A, Q)$.

\begin{Prop}
\label{Prop8dot1}
Let $n, m, l$ be natural numbers so that $n\geq m+l+2$. Let $(A, Q)$ be a relative $n$-Auslander--Gorenstein pair with $\pdim_A Q=l$, $\injdim_A Q=m$ and $_{A}Q\in Q^\perp$. Write $\Lambda=\End_A(Q)^{op}$. Then, $Q$ is an $F_{\Omega^{n-m-2}(Q)}$-cotilting object.
\end{Prop}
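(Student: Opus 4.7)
The proposition asks us to verify that $Q$ satisfies the three conditions defining an $F_M$-cotilting object in $\smod \Lambda$, for $M = \Omega^{n-m-2}(Q)$. Since $\tau M = \tau \Omega^{n-m-2}(Q) = \tau_{n-m-1}(Q)$, the $F_M$-projective objects are $\add \Lambda \oplus M$ and the $F_M$-injective objects are $\add D\Lambda \oplus \tau_{n-m-1}(Q)$. For condition (iii), my plan is to take the direct sum of the two $F_Q$-exact resolutions provided by Proposition~\ref{Prop5dot4}(iii) and (iv), yielding an $F_Q$-exact sequence
\[
0 \to T_m \to \cdots \to T_0 \to D\Lambda \oplus \tau_{n-m-1}(Q) \to 0, \qquad T_i \in \add Q.
\]
To promote $F_Q$-exactness to $F_M$-exactness, I would use the dimension-shift consequence of Proposition~\ref{Prop5dot4}(i): since $\Ext_\Lambda^i(Q,Q)=0$ for $1\leq i\leq n-2$, one gets $\Ext_\Lambda^i(M, T) = \Ext_\Lambda^{i+n-m-2}(Q, T) = 0$ for $T\in \add Q$ and $1\leq i\leq m$. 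Splitting the resolution into short exact pieces and iterating the long exact sequence of $\Hom_\Lambda(M,-)$ gives $F_M$-exactness for each piece.

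For condition (ii), I would truncate the minimal projective resolution of $Q$ in $\smod \Lambda$ at its $(n-m-2)$-th syzygy to obtain an $F_M$-projective resolution
\[
0 \to M \to P_{n-m-3} \to \cdots \to P_0 \to Q \to 0.
\]
Its $F_M$-exactness is verified piece-by-piece: for each short exact sequence $0\to \Omega^{j+1}(Q)\to P_j \to \Omega^j(Q) \to 0$, the obstruction $\Ext_\Lambda^1(M, \Omega^{j+1}(Q)) = \Ext_\Lambda^{n-m-j-2}(Q,Q)$ vanishes for $0\leq j\leq n-m-3$ by Proposition~\ref{Prop5dot4}(i). Applying $\Hom_\Lambda(-, Q)$ and computing cohomology, the vanishing of $\Ext_\Lambda^i(Q,Q)$ for $1\leq i\leq n-m-2$ (again Proposition~\ref{Prop5dot4}(i)) gives $\Ext_{F_M}^{i>0}(Q,Q)=0$.

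The main obstacle is condition (i), which requires a finite $F_M$-injective coresolution of $Q$. My plan is to start from the minimal injective coresolution $0 \to Q \to I^0 \to I^1 \to \cdots$ of $Q_\Lambda$ (each $I^j\in \add D\Lambda$ is $F_M$-injective) and proceed $F_M$-exactly as long as possible. The $j$-th short exact piece $0 \to \Omega^{-(j-1)}(Q) \to I^{j-1} \to \Omega^{-j}(Q) \to 0$ is $F_M$-exact iff $\Ext_\Lambda^1(M, \Omega^{-(j-1)}(Q)) = \Ext_\Lambda^{j+n-m-2}(Q, Q) = 0$, which holds for $1\leq j\leq m$ by Proposition~\ref{Prop5dot4}(i). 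After $m$ such steps, one must cap the coresolution by exhibiting an $F_M$-exact sequence with $\Omega^{-m}(Q)$ on the left and terms in $\add D\Lambda \oplus \tau_{n-m-1}(Q)$ on the right. The capping is the hardest step; I would construct it using the identifications between Nakayama shifts of syzygies and $n$-Auslander--Reiten translates from Lemma~\ref{lemma7dot3} and the remark following it (namely $\nu_\Lambda \Omega^{n-m}(Q)\cong \tau_{n-m-1}(Q)$), together with the $F_M$-exact resolution of $\tau_{n-m-1}(Q)$ produced in condition (iii). Since $\injdim Q_\Lambda$ need not be finite in general, the relative finiteness is a genuine Gorenstein phenomenon.
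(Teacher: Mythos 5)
Your plan for condition~(iii) matches the paper's argument almost exactly: combine the two exact sequences from Proposition~\ref{Prop5dot4}(iii)--(iv) and upgrade $F_Q$-exactness to $F_{\Omega^{n-m-2}(Q)}$-exactness by dimension shifting through the vanishing $\Ext^i_\Lambda(Q,Q)=0$ for $1\le i\le n-2$. Your treatment of condition~(ii) is a correct variant: you build an $F_M$-projective resolution of $Q$ of length $n-m-2$ directly, whereas the paper derives (ii) essentially for free from (i), observing that $\Ext^i_{F_M}(Q,Q)$ automatically vanishes for $i>m$ once $\injdim_{F_M}Q\le m$, and embeds into $\Ext^i_\Lambda(Q,Q)=0$ for $i\le m$. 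Either route works; the paper's is a one-liner once (i) is in hand.

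The genuine gap is in your treatment of condition~(i). You correctly identify that the first $m$ steps of the minimal injective coresolution of $Q_\Lambda$ are $F_M$-exact, but then you describe a ``capping'' step in which you would splice on a further $F_M$-exact sequence built from Lemma~\ref{lemma7dot3} and the Nakayama identity $\nu_\Lambda\Omega^{n-m}(Q)\cong\tau_{n-m-1}(Q)$ together with the resolution of $\tau_{n-m-1}(Q)$ from step~(iii). This is not what is needed, and it is unclear that this construction would terminate. The key observation you are missing is that no capping is required: after $m$ steps the remaining cosyzygy $\Omega^{-m}(Q)$ is \emph{already} an $F_M$-injective object. This follows from the identification
\[
\add\Omega^{-m}(Q)\oplus D\Lambda = \add\tau_{n-m-1}Q\oplus D\Lambda,
\]
which is obtained by combining Remark~\ref{Rmk7dot1} with \cite[Corollary~6.5]{CrPs} (or, equivalently, by combining the unnamed Lemma preceding Lemma~\ref{lemma7dot3} with the Remark following it). Since $\tau_{n-m-1}(Q)$ is $F_M$-injective by definition, so is $\Omega^{-m}(Q)$, and the truncated injective coresolution $0\to Q\to I_0\to\cdots\to I_{m-1}\to\Omega^{-m}(Q)\to 0$ is already a finite $F_M$-injective coresolution of $Q$. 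Your reference to $\nu_\Lambda\Omega^{n-m}(Q)\cong\tau_{n-m-1}(Q)$ (a \emph{syzygy} of $Q$) does not by itself give the needed relation to the \emph{cosyzygy} $\Omega^{-m}(Q)$; bridging those two facts is precisely the content of the additive equality above, and without it your construction has no reason to stop.
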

\begin{proof}
Observe that the modules in $\add D\Lambda \oplus \tau_{n-m-1}Q$ are exactly the injective objects of the exact structure whose exact sequences remain exact under $\Hom_\Lambda(\Omega^{n-m-2}(Q), -)$. So we will start by showing the existence of a finite relative $F_{\Omega^{n-m-2}(Q)}$-resolution of $ D\Lambda \oplus \tau_{n-m-1}Q$ by terms in $\add Q$. By Lemma \ref{lemma5dot3} and the fact that $Q$ is an $m$-quasi-cogenerator of $\Lambda\lsmod$, there are exact sequences
	\begin{equation}
		\xymatrix@R=0.1cm{0 \ar[r] & Q_m'' \ar[r] & \cdots \ar[r] & Q_0'' \ar[r]& \tau_{n-m-1}Q\ar[r] & 0\\
		0 \ar[r] & Q_m'\ar[r] & \cdots \ar[r]& Q_0' \ar[r] & D\Lambda \ar[r] & 0
	}
	\end{equation} which remain exact under $\Hom_{\Lambda}(Q, -)$ with $Q_i'', Q_i'\in \add Q_\Lambda$. So, in particular, there exists an exact sequence
	\begin{equation}
	\xymatrix@R=0.1cm{0 \ar[r] & Q_m \ar[r] & \cdots \ar[r] & Q_0 \ar[r]& \tau_{n-m-1}Q\oplus D\Lambda\ar[r] & 0
	} \label{eq32}
\end{equation}which remains exact under $\Hom_\Lambda(Q, -)$ with $Q_i\in \add Q$. Define inductively $W_0:=D\Lambda\oplus \tau_{n-m-1}Q$ and $W_{i+1}$ as the kernel of $Q_i\rightarrow W_i$ for $i=0, \ldots, m-1$. If $m=0$, then it is clear that (\ref{eq32}) remains exact under $\Hom_{\Lambda}(\Omega^{n-m-2}(Q), -)$ (since (\ref{eq32}) is just an isomorphism in such a case). Assume that $m\geq 1$.
Observe that $\Ext_\Lambda^i(\Omega^{n-m-2}(Q), Q)\cong \Ext_{\Lambda}^{i+n-m-2}(Q, Q)=0$ for $i=1, \ldots, m$. Then, given $i\in\{1, \ldots, m\}$ we obtain, by construction, 
\begin{align*}
	\Ext_{\Lambda}^j(\Omega^{n-m-2}(Q), W_i)&\cong \Ext_\Lambda^{j+1}(\Omega^{n-m-2}(Q), W_{i+1})\\ &\cong \Ext_{\Lambda}^{j+m-i}(\Omega^{n-m-2}(Q), W_m)\\ &=\Ext_{\Lambda}^{j+m-i}(\Omega^{n-m-2}(Q), Q_m)=0,
\end{align*}
for $j=1, \ldots, i$. In particular, $\Ext_{\Lambda}^1(\Omega^{n-m-2}(Q), W_i)=0$ for every $i=1, \ldots, m$ and so (\ref{eq32}) remains exact under $\Hom_{\Lambda}(\Omega^{n-m-2}(Q), -)$, that is, it is an $F_{\Omega^{n-m-2}(Q)}$-exact sequence.

We will now proceed to show that the relative $F_{\Omega^{n-m-2}(Q)}$-injective dimension of $Q$ is finite. We infer by Remark~\ref{Rmk7dot1} and \cite[Corollary~6.5]{CrPs}
that 
\begin{align}
	\add \Omega^{-m}(Q) \oplus D\Lambda &=\add D(\Omega^m(DQ)\oplus \Lambda) \nonumber \\ &= \add D \Hom_A(Q, X_{n-m})\oplus D\Lambda \nonumber \\ & = \add \tau_{n-m-1}Q\oplus D\Lambda, \label{eq33}
\end{align} 
where $X_{n-m}$ fits into an exact sequence $0\rightarrow A\rightarrow Q_0\rightarrow\cdots\rightarrow Q_{n-m-1}\rightarrow X_{n-m}\rightarrow 0$ which is sent through $\Hom_A(-, Q)$ to the beginning of the minimal projective resolution of $Q$ (and $m$ is not necessarily non-zero). So, the minimal injective resolution of $Q$ as $\Lambda$-module \begin{equation}
	\xymatrix{0 \ar[r] & Q \ar[r] & I_0\ar[r] & \cdots \ar[r] & I_{m-1} \ar[r] & \Omega^{-m}(Q)\ar[r] & 0} \label{eqd33}
\end{equation} gives a coresolution of $Q$ into $F_{\Omega^{n-m-2}(Q)}$-injective objects. Since $\Ext_{\Lambda}^i(\Omega^{n-m-2}(Q), Q)=0$ for $i=1, \ldots, m$ we obtain that (\ref{eqd33}) remains exact under $\Hom_{\Lambda}(\Omega^{n-m-2}(Q), -)$. This means that (\ref{eqd33}) is an injective $F_{\Omega^{n-m-2}(Q)}$-exact coresolution and so $\injdim_{F_{\Omega^{n-m-2}(Q)}} Q\leq m$. Thus, $\Ext_{F_{\Omega^{n-m-2}(Q)}}^i(Q, Q)=0$ for $i>m$. On the other hand, $\Ext_{F_{\Omega^{n-m-2}(Q)}}^i(Q, Q)\subset \Ext_\Lambda^i(Q, Q)=0$ for $i=1, \ldots, m$, and thus $\Ext_{F_{\Omega^{n-m-2}(Q)}}^i(Q, Q)=0$ for every $i>0$. This implies that $Q$ is an $F_{\Omega^{n-m-2}(Q)}$-cotilting object.
\end{proof}

\begin{Remark}
\label{remarkm=0}
	It follows by the proof of Proposition \ref{Prop8dot1} that $Q$ is an $F_{\Omega^{n-m-2}(Q)}$-injective object if $m=0$. It turns out that these two statements are equivalent. In fact, assume that $Q$ is an $F_{\Omega^{n-m-2}(Q)}$-injective object. Then $D\Hom_A(Q, DA)\cong Q\in \add D\Hom_A(Q, X_{n-m} \oplus Q)$ using the notation of the proof of Proposition \ref{Prop8dot1}. Since $Q\lcodom_A Q\oplus X_{n-m}\geq 2$ and $Q\lcodom_A DA\geq 2$ this implies that $DA\in \add Q\oplus X_{n-m}$. But $Q\oplus X_{n-m}$ is a tilting-cotilting $A$-module and so the number of direct summands of $DA$ and of $Q\oplus X_{n-m}$ must coincide, that is, $\add DA=\add Q\oplus X_{n-m}\ni Q$. Thus, $m=0$.
\end{Remark}

We continue by proving that $Q$ is a relative tilting object in an exact category where a certain cosysygy of $Q$ is an injective object.

\begin{Prop}\label{Prop8dot3}
Let $n, m, l$ be natural numbers so that $n\geq m+l+2$. Let $(A, Q)$ be a relative $n$-Auslander--Gorenstein pair with $\pdim_A Q=l$, $\injdim_A Q=m$ and $_{A}Q\in Q^\perp$. Write $\Lambda=\End_A(Q)^{op}$. Then, $Q$ is an $F^{\Omega^{-(n-l-2)}(Q)}$-tilting object.
\end{Prop}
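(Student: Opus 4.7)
The statement is the formal dual of Proposition~\ref{Prop8dot1}, and the plan is to derive it by applying Proposition~\ref{Prop8dot1} to the pair $(A^\op, DQ)$ and then transporting the conclusion through the standard $k$-duality $D$. The passage to $(A^\op, DQ)$ has already been used in the proof of Lemma~\ref{lemma5dot3}: by \cite[Corollary~3.1.5]{Cr2} one has $DQ\ldom_{A^\op} A^\op \geq n$, and \cite[Remark~4.9]{CrPs} then yields that $(A^\op, DQ)$ is a relative $n$-Auslander--Gorenstein pair with $\pdim_{A^\op} DQ = m$, $\injdim_{A^\op} DQ = l$ and $DQ \in (DQ)^\perp$. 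The inequality $n \geq m + l + 2$ is symmetric in $m$ and $l$, so it persists, and $\End_{A^\op}(DQ)^\op = \Lambda^\op$.

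Applying Proposition~\ref{Prop8dot1} to $(A^\op, DQ)$ (with the roles of $m$ and $l$ swapped, so that the relevant syzygy exponent becomes $n-l-2$), I conclude that $DQ$ is an $F_{\Omega^{n-l-2}(DQ)}$-cotilting object in $\smod{\Lambda^\op}$.

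Next, I would transport this through the contravariant equivalence $D\colon \smod{\Lambda^\op} \to \smod{\Lambda}$. This duality sends the exact category $(\smod{\Lambda^\op}, F_M)$ to $(\smod{\Lambda}, F^{DM})$: it interchanges $\Hom_{\Lambda^\op}(M, -)$ with $\Hom_{\Lambda}(-, DM)$, swaps the projective and injective objects of the two relative structures, and satisfies $\Ext^i_{F_M}(X, Y) \cong \Ext^i_{F^{DM}}(DY, DX)$. Consequently, $D$ carries $F_M$-cotilting objects to $F^{DM}$-tilting objects. Using the standard identifications $D\Omega^{n-l-2}(DQ) \cong \Omega^{-(n-l-2)}(Q)$ and $DDQ \cong Q$, I conclude that $Q$ is an $F^{\Omega^{-(n-l-2)}(Q)}$-tilting object in $\smod{\Lambda}$, as desired.

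The main delicate point is verifying that $D$ really does set up the announced correspondence between $F_M$-cotilting and $F^{DM}$-tilting objects: concretely, that the $F_{\Omega^{n-l-2}(DQ)}$-exact resolution of $D\Lambda \oplus \tau_{n-l-1}(DQ)$ by modules of $\add DQ$ produced in Proposition~\ref{Prop8dot1} dualises to an $F^{\Omega^{-(n-l-2)}(Q)}$-exact coresolution of $\Lambda \oplus \tau^{-}_{n-l-1}(Q)$ by modules of $\add Q$, and that the finite relative injective dimension and Ext vanishing for $DQ$ translate correctly. If one prefers a direct argument, the alternative is to mirror the proof of Proposition~\ref{Prop8dot1} step by step, invoking Lemma~\ref{lemma5dot1} in place of Lemma~\ref{lemma5dot3} and exploiting $\Ext^i_\Lambda(Q, \Omega^{-(n-l-2)}(Q)) \cong \Ext^{i+n-l-2}_\Lambda(Q, Q) = 0$ for $1 \leq i \leq l$ in place of the syzygy counterpart used there.
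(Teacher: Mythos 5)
Your proposal follows the paper's proof essentially step for step: you pass to $(A^{\op}, DQ)$ using \cite[Remark~4.9]{CrPs}, apply Proposition~\ref{Prop8dot1} to get that $DQ$ is $F_{\Omega^{n-l-2}(DQ)}$-cotilting, and transport that through the duality $D$ using $D\Omega^{n-l-2}(DQ) \cong \Omega^{-(n-l-2)}(Q)$ together with the interchange of $F_M$-exactness and $F^{DM}$-exactness and of relative projectives and injectives. The "delicate point" you flag — that $D$ really converts the $F_M$-cotilting data into $F^{DM}$-tilting data — is precisely what the paper verifies, so the argument is correct and complete.
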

\begin{proof}
From \cite[Remark~4.9]{CrPs} we have that $(A^{op}, DQ)$ is a relative $n$-Auslander--Gorenstein pair with $l=\injdim_{A^{op}} DQ$, $m=\pdim_{A^{op}} DQ$ and $n\geq m+l+2$. By Proposition \ref{Prop8dot1}, $DQ$ is an $F_{\Omega^{n-l-2}(DQ)}$-cotilting object. Since $D$ is an exact functor and $\Hom_{\Lambda}(M, X)\cong \Hom_{\Lambda}(DX, DM)$ for every $M, X\in \Lambda\lsmod$, a sequence $\delta$ is $F_M$-exact if and only if $D\delta$ is $F^{DM}$-exact. Recall that $D\Omega^{n-l-2}(DQ)\cong \Omega^{-(n-l-2)}(Q)$. Hence, for every $i>0$ we have
\[
\Ext_{F^{\Omega^{-(n-l-2)}(Q)}}^i(Q, Q) \cong \Ext_{F_{\Omega^{n-l-2}(DQ)}}^i(DQ, DQ)=0.
\]
Now since the injective objects of $F_{\Omega^{n-l-2}(DQ)}$ are $\add D\Lambda \oplus \tau_{n-l-1}DQ$ and the projective objects of $F^{\Omega^{-(n-l-2)}(Q)}$ are $\add (\Lambda \oplus \tau_{n-l-1}^-Q)$ we obtain that $D$ interchanges the projective objects of $F^{\Omega^{-(n-l-2)}(Q)}$ with the injective objects of $F_{\Omega^{n-l-2}(DQ)}$. We conclude that $Q$ is an  $F^{\Omega^{-(n-l-2)}(Q)}$-tilting object.
\end{proof}

\begin{Cor}
	Let $(A, Q)$ be a relative $n$-Auslander--Gorenstein pair with $_{A}Q\in Q^\perp$ and $n= m+l+2$, where $m:=\injdim_{A} Q$ and $l:=\pdim_{A} Q$. Write $\Lambda=\End_A(Q)^{op}$. Then, $Q$ is an $F_{\Omega^{l}(Q)}$-tilting-cotilting object.
\end{Cor}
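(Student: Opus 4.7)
The plan is to combine Propositions~\ref{Prop8dot1} and~\ref{Prop8dot3} with the observation that, under the hypothesis $n = m+l+2$, the two exact structures $F_{\Omega^l(Q)}$ and $F^{\Omega^{-m}(Q)}$ on $\smod{\Lambda}$ coincide, after which the corollary is immediate.

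First, I would substitute $n=m+l+2$ into the homological shifts appearing in the two preceding propositions to obtain $n-m-2=l$ and $n-l-2=m$. Then Proposition~\ref{Prop8dot1} yields at once that $Q$ is an $F_{\Omega^l(Q)}$-cotilting object, while Proposition~\ref{Prop8dot3} yields that $Q$ is an $F^{\Omega^{-m}(Q)}$-tilting object. The only remaining task is therefore to identify the two exact structures.

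To do this, I would invoke the equivalence that a short exact sequence is $F_M$-exact if and only if it is $F^{\tau M}$-exact (\cite[Proposition~1.7]{zbMATH00423522}), together with the elementary observation that $F^N$ depends on $N$ only through $\add N$ modulo injective summands, since $\Hom_\Lambda(-, I)$ is exact for any injective $\Lambda$-module $I$. It would therefore suffice to establish
\[
\add D\Lambda \oplus \tau\Omega^l(Q) = \add D\Lambda \oplus \Omega^{-m}(Q).
\]
Under our assumption on $n$ we have $\tau\Omega^l(Q) = \tau_{l+1}(Q) = \tau_{n-m-1}(Q)$, so the identity above is exactly \eqref{eq33} from the proof of Proposition~\ref{Prop8dot1}, which itself was deduced from Remark~\ref{Rmk7dot1} and \cite[Corollary~6.5]{CrPs}.

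With this identification in hand, one obtains the chain $F_{\Omega^l(Q)} = F^{\tau\Omega^l(Q)} = F^{\Omega^{-m}(Q)}$ of exact structures on $\smod{\Lambda}$, so that $Q$ is simultaneously tilting and cotilting in a single common exact structure, establishing the corollary. I expect the one genuinely subtle point to be the recognition that the two a priori distinct exact structures actually agree; but this is already forced by the numerical coincidences $n-m-2=l$ and $n-l-2=m$ together with the equality of $\tau_{n-m-1}(Q)$ and $\Omega^{-m}(Q)$ up to injective summands proven inside Proposition~\ref{Prop8dot1}.
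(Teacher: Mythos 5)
Your proposal is correct and takes essentially the same approach as the paper. The only cosmetic difference is that you identify the two exact structures by matching their injective objects (via equation~\eqref{eq33}, i.e., $\add D\Lambda\oplus\tau\Omega^l(Q)=\add D\Lambda\oplus\Omega^{-m}(Q)$), whereas the paper matches their projective objects (showing $\add\Lambda\oplus\Omega^{n-m-2}(Q)=\add\Lambda\oplus\tau^-_{n-l-1}Q$ directly from \cite[Corollary~6.5]{CrPs}); since equation~\eqref{eq33} is itself derived from the same Corollary~6.5, the two arguments are dual formulations of one and the same observation.
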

\begin{proof}
It follows by \cite[Corollary~6.5]{CrPs} that $\add \Lambda\oplus \Omega^{n-m-2}(Q)=\add \Lambda \oplus \tau_{n-l-1}^-Q$ since $n-l=m+2=n-(n-m-2)$. So the exact structures induced by $F^{\Omega^{-(n-l-2)}(Q)}$ and $F_{\Omega^{n-m-2}(Q)}=F_{\Omega^{l}(Q)}$ coincide. The result then follows by Propositions \ref{Prop8dot1} and \ref{Prop8dot3}. 
\end{proof}

Given a positive integer $n$ %\in \mathbb{N}$ 
and $M\in \smod{A}$, we write  
$${}^{\perp_n}  M:=\{X\in \smod{\Lambda}\ | \  \Ext_{\Lambda}^i(X, M)=0, \ i=1, \ldots, n \}$$ 
and 
$$
M^{\perp_n}:=\{X\in \smod{\Lambda}\ | \ \Ext_{\Lambda}^i(M, X)=0, \ i=1, \ldots, n \}.
$$
Recall that over an Iwanaga--Gorenstein algebra $A$, the category $\CM(A)$ of Cohen-Macaulay $A$-modules are the modules in $\lerp A$. We are now ready to state and prove the main theorem of this section.

\begin{Theorem}
\label{thm8dot5}
Let $(A, Q)$ be a relative $n$-Auslander--Gorenstein pair with $_{A}Q\in Q^\perp$ and $n\geq m+l+2$, where $m:=\injdim_{A} Q$ and $l:=\pdim_{A} Q$. Write $\Lambda=\End_A(Q)^{op}$. The following assertions hold.
\begin{enumerate}[\normalfont(i)]
\item Then, $\Hom_\Lambda(-, Q)$ induces an exact duality between ${}^{\perp_m}  Q\cap Q^{\perp_{n-m-2}}$ and $\lerp {_A A} $.

\item Then, $\Hom_{\Lambda}(Q, -)$ induces an exact equivalence of categories between ${}^{\perp_{n-l-2}}  Q\cap Q^{\perp_{l}}$ and $\lerp A_A$.

\item There is a triangle equivalence$\colon$ 
\[
\xymatrix{
{}^{\perp_m}  \underline{Q\cap Q}^{\perp_{n-m-2}} \ar[r]^{ \ \ \  \simeq} &  \underline{\CM}(A^{\op})},
\]
where ${}^{\perp_m}  \underline{Q\cap Q}^{\perp_{n-m-2}}$ is the stable category of ${}^{\perp_m}  Q\cap Q^{\perp_{n-m-2}}$. %modulo the ideal generated by $\add{Q}$.

\item There is a triangle equivalence$\colon$ 
\[
\xymatrix{
{}^{\perp_{n-l-2}}  \underline{Q\cap Q}^{\perp_{l}} \ar[r]^{ \ \ \ \simeq} &  \underline{\CM}(A)},
\]
where ${}^{\perp_{n-l-2}}  \underline{Q\cap Q}^{\perp_{l}}$ is the stable category of ${}^{\perp_{n-l-2}}  Q\cap Q^{\perp_{l}}$. %modulo the ideal generated by $\add{Q}$.

\end{enumerate}
\end{Theorem}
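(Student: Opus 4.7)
The theorem has four parts but is organised around part~(i). Parts (ii) and (iv) follow from (i) and (iii) by passage to the relative $n$-Auslander--Gorenstein pair $(A^{op}, DQ)$, in which the values of $m$ and $l$ are swapped (see \cite[Remark~4.9]{CrPs}, as used in the proof of Proposition~\ref{Prop8dot3}), while parts (iii) and (iv) are the stable-category counterparts of (i) and (ii) composed with the standard contravariant triangulated duality $D\colon \underline\CM(A) \to \underline\CM(A^{op})$. So I plan (i) carefully and then indicate how (iii) is extracted.

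The candidate duality is the contravariant functor $\Hom_\Lambda(-, Q)\colon \smod \Lambda \to A\lsmod$, well-defined since $Q$ is an $(A, \Lambda)$-bimodule, and with double centraliser $\End_\Lambda(Q) \cong A$ supplied by Theorem~\ref{thm313}$(\beta)$. My plan is to invoke Auslander--Solberg's relative cotilting duality \cite{zbMATH00423522} within the exact structure $\mathcal{E} := F_{\Omega^{n-m-2}(Q)}$ on $\smod \Lambda$, where $Q$ is $\mathcal{E}$-cotilting by Proposition~\ref{Prop8dot1} with $\injdim_\mathcal{E} Q \leq m$. The first task is to identify
\[
{}^{\perp_m} Q \cap Q^{\perp_{n-m-2}} = \{ X \in \smod \Lambda \mid \Ext^{i>0}_\mathcal{E}(X, Q) = 0 \},
\]
which follows from dimension shifting along the minimal $\Lambda$-projective resolution of $Q$: the condition $X \in Q^{\perp_{n-m-2}}$ precisely forces classical projective resolutions of $X$ to be $\mathcal{E}$-exact, so $\Ext^i_\mathcal{E}(X, Q) = \Ext^i_\Lambda(X, Q)$ for $1 \leq i \leq m$, and the Ext vanishes for $i>m$ since $\injdim_\mathcal{E} Q \leq m$. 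For $X$ in this perpendicular, the $\mathcal{E}$-cotilting structure provides an $\mathcal{E}$-exact coresolution $0 \to X \to Q^0 \to \cdots \to Q^m \to 0$ in $\add Q$, and using $\Ext^i_\Lambda(Q, Q) = 0$ for $1 \leq i \leq n-2$ (Proposition~\ref{Prop5dot4}(i)) the functor $\Hom_\Lambda(-, Q)$ preserves exactness, producing an exact complex in $\add A$ that, when spliced with the $\add Q$-coresolution of $A$ of length $n$ (available from $Q\ldom_A A \geq n$), exhibits $\Hom_\Lambda(X, Q)$ as having a complete $\add A$-resolution, hence lying in $\CM(A)$. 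Conversely, for $Y \in \CM(A)$, setting $X = \Hom_A(Y, Q)$, both $X \in {}^{\perp_m} Q$ and $X \in Q^{\perp_{n-m-2}}$ come from a change-of-rings spectral sequence of the form $\Ext^p_A(Y, \Ext^q_\Lambda(Q, Q)) \Rightarrow \Ext^{p+q}$ which degenerates in the available range using $\Ext^q_\Lambda(Q, Q) = 0$ for $1 \leq q \leq n-2$, collapsing onto $\Ext^i_A(Y, A) = 0$. The double-centraliser isomorphisms on both sides then follow from Theorem~\ref{thm313}$(\alpha)$ after verifying $Q\ldom_\Lambda X \geq 2$ and $Q\ldom_A Y \geq 2$, both of which are read off the coresolutions supplied by Propositions~\ref{Prop8dot1} and \ref{Prop8dot3}.

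For (iii), observe that $\Hom_\Lambda(-, Q)$ carries $\add Q \subset \smod \Lambda$ into $\add A \subset A\lsmod$, so the duality of (i) descends to a contravariant equivalence ${}^{\perp_m}\underline{Q \cap Q}^{\perp_{n-m-2}} \to \underline\CM(A)$ of stable categories, and composing with the standard triangulated duality $D$ yields the desired covariant triangle equivalence. The triangulated structure on the source stable category comes from a Frobenius exact-category structure on ${}^{\perp_m} Q \cap Q^{\perp_{n-m-2}}$ whose projective--injective objects both coincide with $\add Q$; pinning this down requires combining the $\mathcal{E}$-cotilting property from Proposition~\ref{Prop8dot1} with the dual $F^{\Omega^{-(n-l-2)}(Q)}$-tilting property from Proposition~\ref{Prop8dot3}. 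The hardest step in the whole proof is the essential surjectivity of (i) together with the spectral-sequence argument: translating $\Ext^{i>0}_A(Y, A) = 0$ into the two Ext vanishings defining the perpendicular category for $X = \Hom_A(Y, Q)$, and verifying the spectral-sequence degeneration within the available range $n-2$, require careful bookkeeping of the interplay between classical and relative Ext groups through the $\add Q$-coresolution of $A$.
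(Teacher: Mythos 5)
Your proposal takes the right general framework (the exact structure $F_{\Omega^{n-m-2}(Q)}$, the $F$-cotilting status of $Q$ from Proposition~\ref{Prop8dot1}, and the duality $\Hom_\Lambda(-,Q)$), but there are two genuine gaps that break the argument for part~(i), and (iii)--(iv) inherit them.

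The first and most serious gap is the displayed identification
\[
{}^{\perp_m} Q \cap Q^{\perp_{n-m-2}} = \{ X \in \smod \Lambda \mid \Ext^{i>0}_{\mathcal{E}}(X, Q) = 0 \}.
\]
This is false: the right-hand side is ${}^{\perp_F} Q$ (where $F:=F_{\Omega^{n-m-2}(Q)}$), and by~\cite[Proposition~2.5(b)]{IS} one has $\Ext^i_F(Y,Q)\cong\Ext^i_\Lambda(Y,Q)$ for $1\le i\le m$ \emph{for all} $Y$ (this is unconditional, not driven by $Y\in Q^{\perp_{n-m-2}}$ as you suggest); combined with $\injdim_F Q\le m$ this gives ${}^{\perp_F}Q = {}^{\perp_m}Q$, dropping the second constraint entirely. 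What the paper actually shows is that each factor of the intersection has a separate relative interpretation, namely ${}^{\perp_F}Q = {}^{\perp_m}Q$ and, using $\pdim_F Q\le n-m-2$ together with~\cite[Proposition~2.5(a)]{IS} and the identification $F=F^{\Omega^{-m}(Q)}$ from~\cite[Corollary~6.5]{CrPs}, also $Q^{\perp_F}=Q^{\perp_{n-m-2}}$. The duality with $\lerp{}_A A$ must then be established on the \emph{intersection} ${}^{\perp_F}Q\cap Q^{\perp_F}$, not on ${}^{\perp_F}Q$ alone, and this requires the two-sided Ext comparison $\Ext^i_F(Q,L)\cong\Ext^i_A(\Hom_\Lambda(L,Q),A)$ from Auslander--Solberg to see when $\Hom_\Lambda(L,Q)$ lands in $\lerp{}_A A$.

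The second gap is in your surjectivity argument. You invoke a change-of-rings spectral sequence ``$\Ext^p_A(Y,\Ext^q_\Lambda(Q,Q))\Rightarrow\Ext^{p+q}$'' whose existence, hypotheses and abutment are not justified; there is no such standard spectral sequence in this generality. Relatedly, your claim that splicing the $\mathcal{E}$-cotilting coresolution $0\to X\to Q^0\to\cdots\to Q^m\to 0$ with the length-$n$ $\add Q$-coresolution of $A$ exhibits $\Hom_\Lambda(X,Q)$ as having a ``complete $\add A$-resolution'' only yields a one-sided resolution, not a totally acyclic one, so it does not directly place $\Hom_\Lambda(X,Q)$ in $\CM(A)$. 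The paper sidesteps both problems by invoking~\cite[Theorem~3.2(a), Corollary~3.6(a), Proposition~3.8(b)]{zbMATH00423523}: the relative cotilting module $Q$ induces a duality ${}^{\perp_F}Q\to{}^\perp C$ with $C=\Hom_\Lambda(\Lambda\oplus\Omega^{n-m-2}(Q),Q)\cong Q\oplus X_{n-m-2}$, where $X_{n-m-2}$ is the $(n-m-2)$-th term in the canonical $\add Q$-coresolution of ${}_A A$; one checks that $C$ has finite projective dimension over $A$, hence $\lerp{}_A A\subseteq\lerp C$, and then uses the Auslander--Solberg Ext-isomorphism $\Ext^i_F(Q,L)\cong\Ext^i_A(\Hom_\Lambda(L,Q),A)$ to show that the preimage $L$ of any $N\in\lerp{}_A A$ lies in $Q^{\perp_F}$. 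No spectral sequence or splicing is needed. Once (i) is in place, (ii) is the $(A^{op},DQ)$ dual (as you correctly indicate), and (iii)--(iv) follow by verifying that ${}^{\perp_m}Q\cap Q^{\perp_{n-m-2}}$ is Frobenius with projective-injectives $\add Q$ and applying Happel; this is in the same spirit as what you sketch, though the paper derives the Frobenius property directly from the exact duality of (i) rather than from combining Propositions~\ref{Prop8dot1} and~\ref{Prop8dot3}.
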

\begin{proof}
(i)	Observe that the $F_{\Omega^{n-m-2}(Q)}$-exact sequences are precisely the $F_{\Lambda\oplus \Omega^{n-m-2}(Q)}$-exact sequences since $\Lambda$ is of course a projective $\Lambda$-module. To simplify the notation, we write $F:=F_{\Omega^{n-m-2}(Q)}$ and ${}^{\perp_F} Q$ to denote ${\{X\in \smod{\Lambda}\ | \ \Ext_F^i(X, Q)=0, \forall i>0 \}}$. Similarly, we denote by $Q^{\perp_F}$ the subcategory ${\{X\in \smod{\Lambda}\ | \ \Ext_F^i(Q, X)=0, \forall i>0 \}}$. 
	
	By Proposition \ref{Prop8dot1}, $Q$ is an $F$-cotilting object. By \citep[Theorem 3.2(a), Corollary 3.6(a), Proposition 3.8(b)]{zbMATH00423523}, $\Hom_{\Lambda}(-, Q)$ restricts to a duality ${}^{\perp_F} Q\rightarrow {}^\perp C$ with $$C=\Hom_{\Lambda}(\Lambda\oplus \Omega^{n-m-2}(Q), Q)\cong Q\oplus \Hom_\Lambda(\Omega^{n-m-2}(Q), Q)\in A\lsmod.$$
	Since $Q\ldom_A A\geq n\geq n-m-2$, there exists an exact sequence 
	\begin{equation}
		\xymatrix{0 \ar[r] & A \ar[r] & Q_0 \ar[r] & \cdots \ar[r] & Q_{n-m-3} \ar[r] & X_{n-m-2} \ar[r] & 0}\label{eq34}
	\end{equation} which remains exact under $\Hom_A(-, Q)$ with $Q_i\in \add Q$ such that $\Omega^{n-m-2}(Q)\cong \Hom_A(X_{n-m-2}, Q)$. Thanks to $n-(n-m-2)=m+2\geq 2$ we obtain that $Q\ldom_A X_{n-m-2}\geq 2$. Hence, $\Hom_{\Lambda}(\Hom_A(X_{n-m-2}, Q), Q)\cong X_{n-m-2}$. Therefore $C\cong Q\oplus X_{n-m-2}$.
	Using the exact sequence (\ref{eq34}) it follows that $X_{n-m-2}$ has finite projective dimension, and so $C$ has finite projective dimension. Let $N\in \lerp {}_A A$. Then $N\in \lerp C$.
	To see this, let 	\begin{equation}
		\xymatrix{0 \ar[r] & P_t \ar[r] & \cdots \ar[r] & P_0 \ar[r] & C \ar[r] & 0 }
	\end{equation} be a projective resolution of $C$. As $N\in \lerp {}_A A$ we get $\Ext_A^i(N, P_j)=0$ for $i>0$. Hence, by applying $\Hom_A(N, -)$ we infer that $\Ext_A^i(N, C)=0$ for $i>0$ using dimension shifting. So, there exists an object $L\in {}^{\perp_F} Q$ so that $N=\Hom_{\Lambda}(L, Q)$. By \citep[Theorem 3.2(a), Proposition 3.7]{zbMATH00423523}, we have
 $$\Ext_F^i(Q, L)\cong \Ext_A^i(\Hom_{\Lambda}(L, Q), \Hom_{\Lambda}(Q, Q))\cong \Ext_A^i(N, A)=0, \ \forall i>0.$$
Thus, $L\in {}^{\perp_F} Q \cap Q^{\perp_F}$. So, it follows that $\Hom_{\Lambda}(-, Q)$ restricts to a duality $ {}^{\perp_F} Q \cap Q^{\perp_F}\rightarrow \lerp {}_A A$.

We will now proceed to simplify ${}^{\perp_F} Q$ and $Q^{\perp_F}$.

By \citep[Proposition 2.5(b)]{IS}, we get $\Ext_F^i(Y, Q)\cong \Ext_\Lambda^i(Y, Q)$ for every $i=1, \ldots, m$ and $Y\in \smod{\Lambda}$ because $\Ext_\Lambda^i(\Omega^{n-m-2}(Q), Q)=0$ for $i=1, \ldots, m$.
Hence, 
\begin{align}
	{}^{\perp_F} Q &=\{X\in \smod{\Lambda}\ | \ \Ext_F^{i>0}(X, Q)=0\} \nonumber \\
      &= \{X\in \smod{\Lambda}\ | \ \Ext_F^i(X, Q)=0, \ i=1, \ldots, m \} \nonumber \\
      &=\{X\in \smod{\Lambda}\ | \ \Ext_\Lambda^i(X, Q)=0, \ i=1, \ldots, m \} \nonumber \\
      &= {}^{\perp_m}  Q,
\end{align}where the second equality follows from $\injdim_F Q\leq m$.

By \cite[Corollary~6.5]{CrPs}, we obtain that $\add D\Lambda \oplus \tau_{n-m-1}Q=\add D\Lambda \oplus \Omega^{-m}(Q)$, see for instance
(\ref{eq33}). Hence, $F=F_{\Omega^{n-m-2}(Q)}=F^{\tau_{n-m-1}(Q)}=F^{\Omega^{-m}(Q)}$. So the beginning of the minimal projective resolution of $Q$, \begin{equation}
	\xymatrix{0 \ar[r] & \Omega^{n-m-2}(Q) \ar[r] & P_{n-m-3}\ar[r] & \cdots \ar[r] & P_0\ar[r]& Q\ar[r] & 0},
\end{equation} is $F$-exact because $\Ext_\Lambda^1(\Omega^{j}(Q), \Omega^{-m}(Q))=\Ext_\Lambda^{1+j+m}(Q, Q)=0$ for $j=1, \ldots, n-m-3$. As the $F$-projective objects are the modules in $\add \Lambda\oplus \Omega^{n-m-2}(Q)$, it follows that $\pdim_F Q\leq n-m-2$. Now using \citep[Proposition 2.5(a)]{IS} together with the fact that $\Ext_\Lambda^i(Q, \Omega^{-m}(Q))=\Ext_\Lambda^{i+m}(Q, Q)=0$ for $i=1, \ldots, n-m-2$, we infer that $\Ext_F^i(Q, X)\cong \Ext_\Lambda^i(Q, X)$ for $i=1, \ldots, n-m-2$. Thus, 
\begin{align*}
	Q^{\perp_F}&=\{X\in \smod{\Lambda}\ | \ \Ext_F^{i>0}(Q, X)=0\} \\ &= \{X\in \smod{\Lambda}\ | \ \Ext_F^i(Q, X)=0, \  i=1, \ldots, n-m-2\}\\ &=\{X\in \smod{\Lambda}\ | \ \Ext_\Lambda^i(Q, X)=0, \  i=1, \ldots, n-m-2  \} \\ &=Q^{\perp_{n-m-2}}.
\end{align*} 
It remains to show that the duality between ${}^{\perp_m}  Q\cap Q^{\perp_{n-m-2}}$ and $\lerp {_A A}$ is exact. Consider an $F$-exact sequence $0\to X\to Y\to Z\to 0$ in  ${}^{\perp_m}  Q\cap Q^{\perp_{n-m-2}}$. Applying $\Hom_{\Lambda}(-,Q)$ we obtain the exact sequence $0\to \Hom_{\Lambda}(Z,Q)\to \Hom_{\Lambda}(Y,Q)\to \Hom_{\Lambda}(X,Q) \to \Ext^1_{\Lambda}(Z,Q)$. 
If $m=0$, then Remark~\ref{remarkm=0} yields that $Q$ is an $F_{\Omega^{n-m-2}(Q)}$-injective object and 
 therefore the above $F$-exact sequence remains exact under $\Hom_\Lambda(-, Q)$.
Assume now that $m\geq 1$. Since $Z$ lies in ${}^{\perp_m}Q$, we infer that $\Ext^1_{\Lambda}(Z,Q)=0$ and therefore the exactness of the duality follows. %Thus, (i) holds.
So, this completes the proof of (i).

(ii) Since $(A^{op}, DQ)$ is a relative $n$-Auslander--Gorenstein pair with $n\geq m+l+2$, where $m=\pdim_{A^{op}} DQ$ and $l=\injdim_{A^{op}} DQ$, statement (i) yields that the functor $\Hom_\Lambda(-, DQ)$ restricts to a duality $ {}^{\perp_l}  DQ\cap DQ^{\perp_{n-l-2}}\rightarrow \lerp A_A$. Observe that $\Hom_\Lambda(-, DQ)\cong \Hom_\Lambda(Q, D-)\cong \Hom_{\Lambda}(Q, -)\circ D$. Since $D$ induces a duality between $ {}^{\perp_{n-l-2}}  Q\cap Q^{\perp_{l}}$ and $ {}^{\perp_l}  DQ\cap DQ^{\perp_{n-l-2}}$ we conclude that $\Hom_\Lambda(Q, -)$ restricts to an equivalence of categories ${}^{\perp_{n-l-2}}  Q\cap Q^{\perp_{l}}\rightarrow \lerp A_A$.

(iii)  Clearly, the category ${}^{\perp_m}{Q\cap Q}^{\perp_{n-m-2}}$ is closed under extensions. Hence, ${}^{\perp_m}{Q\cap Q}^{\perp_{n-m-2}}$ is an exact category (in the sense of Quillen). Moreover, the object $Q$ is injective in ${}^{\perp_m}{Q}$ and it is projective in ${Q}^{\perp_{n-m-2}}$. We infer that the object $Q$ is projective-injective in the intersection ${}^{\perp_m}{Q\cap Q}^{\perp_{n-m-2}}$. Using the duality of (i), it follows that the modules in $\add{Q}$ are precisely the injective-projective objects of ${}^{\perp_m}{Q\cap Q}^{\perp_{n-m-2}}$. Moreover, from the fact that in (i) we have an exact duality, we get that ${}^{\perp_m}{Q\cap Q}^{\perp_{n-m-2}}$ has enough projectives. We infer that ${}^{\perp_m}{Q\cap Q}^{\perp_{n-m-2}}$ is an exact Frobenius category and therefore the stable category ${}^{\perp_m}  \underline{Q\cap Q}^{\perp_{n-m-2}}$ is triangulated \cite{Happel}. Recall that the objects of ${}^{\perp_m}  \underline{Q\cap Q}^{\perp_{n-m-2}}$ are the objects of ${}^{\perp_m}{Q\cap Q}^{\perp_{n-m-2}}$ and the morphism space consists of morphisms in ${}^{\perp_m}{Q\cap Q}^{\perp_{n-m-2}}$ modulo the subgroup consisting of the maps factoring through an object in $\add{Q}$. The desired triangle equivalence between ${}^{\perp_m}  \underline{Q\cap Q}^{\perp_{n-m-2}}$ and $\underline{\CM}(A^{\op})$
follows immediately by (i). Similarly using (ii), we can prove the triangle equivalence in (iv).
\end{proof}

\begin{Cor}
		Let $(A, Q)$ be a relative $n$-Auslander--Gorenstein pair with $_{A}Q\in Q^\perp$ and $n\geq m+l+2$, where $m:=\injdim_{A} Q$ and $l:=\pdim_{A} Q$. Write $\Lambda=\End_A(Q)^{op}$. Then, the following assertions are equivalent:
		\begin{enumerate}[\normalfont(i)]
			\item $(A, Q)$ is a relative $n$-Auslander pair;
			\item $\add Q_\Lambda= {}^{\perp_m}  Q\cap Q^{\perp_{n-m-2}}$;
			\item $\add Q_\Lambda={}^{\perp_{n-l-2}}  Q\cap Q^{\perp_{l}}$.
		\end{enumerate}
\end{Cor}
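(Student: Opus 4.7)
The plan is to reduce both equivalences to the standard fact that an Iwanaga--Gorenstein algebra $A$ has finite global dimension if and only if $\CM(A)=\add {}_AA$ (equivalently, $\lerp A_A=\add A_A$). The forward direction uses dimension shifting: if $M\in \lerp A$ and $\pdim_A N<\infty$, then $\Ext_A^i(M,N)=0$ for all $i>0$, so when $\gldim A$ is finite every $N$ satisfies this hypothesis and $M$ must be projective. Conversely, $A$ being $n$-Iwanaga--Gorenstein gives $\Omega^nM\in \CM(A)=\add {}_AA$ for every $M$, forcing $\pdim_A M\leq n$.

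For (i)$\Leftrightarrow$(ii), the key tool is the duality $\Hom_\Lambda(-,Q)\colon {}^{\perp_m}Q\cap Q^{\perp_{n-m-2}}\to \lerp{{}_AA}$ from Theorem~\ref{thm8dot5}(i). First I would verify that $Q$ itself lies in ${}^{\perp_m}Q\cap Q^{\perp_{n-m-2}}$: this is immediate from Proposition~\ref{Prop5dot4}(i), since both $m$ and $n-m-2$ are at most $n-2$ (using $n\geq m+l+2\geq m+2$). Next, the double centralizer property recorded in Theorem~\ref{thm313} gives $\Hom_\Lambda(Q,Q)\cong A$ as left $A$-modules, so the duality restricts to an equivalence between $\add Q_\Lambda$ and $\add {}_AA$. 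Consequently, the equality $\add Q_\Lambda={}^{\perp_m}Q\cap Q^{\perp_{n-m-2}}$ holds exactly when $\add {}_AA=\CM(A)$, which by the recalled fact is equivalent to $\gldim A<\infty$, that is, to $(A,Q)$ being a relative $n$-Auslander pair.

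The equivalence (i)$\Leftrightarrow$(iii) follows by the parallel argument using Theorem~\ref{thm8dot5}(ii): the functor $\Hom_\Lambda(Q,-)$ restricts to an equivalence ${}^{\perp_{n-l-2}}Q\cap Q^{\perp_l}\to \lerp A_A$, with $Q_\Lambda$ corresponding to $A_A$ via the right $A$-module identification $\Hom_\Lambda(Q,Q)\cong A$ induced by the left $A$-action on $Q$; translating $\lerp A_A=\add A_A$ back through this equivalence yields the criterion. I do not anticipate any substantive obstacle: the corollary is essentially a dictionary translation of Theorem~\ref{thm8dot5} through the standard Cohen--Macaulay characterization of finite global dimension for Iwanaga--Gorenstein algebras. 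The only care needed is to check the correct one-sided $A$-module structure on $\Hom_\Lambda(Q,Q)$ in each case and to confirm that $Q$ belongs to the relevant Ext-orthogonal class, both of which are already covered by Theorem~\ref{thm313} and Proposition~\ref{Prop5dot4}(i).
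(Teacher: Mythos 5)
Your argument is correct and follows essentially the same route as the paper: reduce to the fact that an Iwanaga--Gorenstein algebra has finite global dimension iff $\CM(A)=\add A$, then transport this through the dualities of Theorem~\ref{thm8dot5} using the projectivization identification $\add Q_\Lambda \leftrightarrow \add{}_AA$. Your extra check that $Q$ itself lies in ${}^{\perp_m}Q\cap Q^{\perp_{n-m-2}}$ via Proposition~\ref{Prop5dot4}(i) is a sensible sanity check the paper leaves implicit, but it does not change the argument.
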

\begin{proof}
Since $A$ is an Iwanaga-Gorenstein algebra, it has finite global dimension if and only if the modules in $\lerp {}_A A$ are precisely the projective $A$-modules. By projectivization, $\Hom_{\Lambda}(-, Q)$ provides a duality between $\add Q$ and $\add {}_A A$. By Theorem \ref{thm8dot5}(i), the equivalence between (i) and (ii) follows. Analogously, using Theorem \ref{thm8dot5}(ii), the equivalence between (i) and (iii) holds.
\end{proof}

\begin{Example}
    We continue now with the relative Auslander pair $(A, Q)=(S(2, 4), V^{\otimes 4}).$ As we have seen in Section \ref{schuralgebra}, in this case the parameters are $n=4$ and $l=m=1$. So, over the algebra $\End_A(Q)^{op}$ we have the following identification
$$\add Q=\add  \left( 1 \oplus \begin{tikzcd}[every arrow/.append style={dash}, column sep={0.4em},
		row sep={0.5em}]
 1 \arrow[d] \\ 2 \arrow[d] \\1
	\end{tikzcd} \oplus  \begin{tikzcd}[every arrow/.append style={dash}, column sep={0.4em},
		row sep={0.5em}]
  & 2 \arrow[dl] \arrow[d] & 1 \arrow[dl]\\
		1 & 2 &  
	\end{tikzcd} \right)= {}^{\perp_1} Q\cap Q^{\perp_1}$$
    However, ${}^{\perp_1} Q\not\subset \add Q $ since for example the projective cover of $2$ does not belong to $\add Q$. So, $Q$ cannot be a classical cluster tilting object.
    Observe also that $Q$ fails to be an (1, 1)-ortho-symmetric module in the sense of \cite{ChenKoenig} only because $Q$ fails to be a generator-cogenerator.
\end{Example}

\section*{Acknowledgments}
This work began when the first-named author was visiting the Max Planck Institute for Mathematics at Bonn, and so the first-named author would like to thank the MPIM for its hospitality during the stay. For the second-named author, the research project is implemented in the framework of H.F.R.I call ``Basic research Financing (Horizontal support of all Sciences)" under the National Recovery and Resilience Plan ``Greece 2.0" funded by the European Union -- NextGenerationEU (H.F.R.I. Project Number: 76590).

\bibliographystyle{alphaurl}
\bibliography{ref}

\end{document}